\newtheorem{thm}{Theorem}[section]
\newtheorem{lem}[thm]{Lemma}
\newtheorem{cor}[thm]{Corollary}
\newtheorem{prop}[thm]{Proposition}
\newtheorem*{claim*}{Claim}
\theoremstyle{definition}
\newtheorem{defn}[thm]{Definition}
\newtheorem{question}[thm]{Question}
\newtheorem{conj}[thm]{Conjecture}
\theoremstyle{remark}
\newtheorem{rem}[thm]{Remark}
\newcommand*\fancyrefthmlabelprefix{thm}\frefformat{plain}{\fancyrefthmlabelprefix}{Theorem~#1}
\newcommand*\fancyreflemlabelprefix{lem}\frefformat{plain}{\fancyreflemlabelprefix}{Lemma~#1}
\newcommand*\fancyrefproplabelprefix{prop}\frefformat{plain}{\fancyrefproplabelprefix}{Proposition~#1}
\newcommand*\fancyrefcorlabelprefix{cor}\frefformat{plain}{\fancyrefcorlabelprefix}{Corollary~#1}
\newcommand*\fancyrefclaimlabelprefix{claim}\frefformat{plain}{\fancyrefclaimlabelprefix}{Claim~#1}
\newcommand*\fancyreffactlabelprefix{fact}\frefformat{plain}{\fancyreffactlabelprefix}{Fact~#1}
\newcommand*\fancyrefquestionlabelprefix{question}\frefformat{plain}{\fancyrefquestionlabelprefix}{Question~#1}
\newcommand*\fancyrefconjlabelprefix{conj}\frefformat{plain}{\fancyrefconjlabelprefix}{Conjecture~#1}
\newcommand*\fancyrefdefnlabelprefix{defn}\frefformat{plain}{\fancyrefdefnlabelprefix}{Definition~#1}
\newcommand*\fancyrefconstlabelprefix{const}\frefformat{plain}{\fancyrefconstlabelprefix}{Construction~#1}
\newcommand*\fancyrefsetuplabelprefix{setup}\frefformat{plain}{\fancyrefsetuplabelprefix}{Setup~#1}
\newcommand*\fancyrefexlabelprefix{ex}\frefformat{plain}{\fancyrefexlabelprefix}{Example~#1}
\newcommand*\fancyrefremlabelprefix{rem}\frefformat{plain}{\fancyrefremlabelprefix}{Remark~#1}
\newcommand*\fancyrefitemlabelprefix{item}\frefformat{plain}{\fancyrefitemlabelprefix}{(#1)}
\def\repeat#1#2 {\expandafter\gdef\csname B#1\endcsname {\mathbb{#1}}
  \ifthenelse{\equal{#2}{*}}{}{\repeat #2 }}
\def\repeat#1#2 {\expandafter\gdef\csname rm#1\endcsname {\mathrm{#1}}
	\ifthenelse{\equal{#2}{*}}{}{\repeat #2 }}
\def\repeat#1#2 {\expandafter\gdef\csname C#1\endcsname {\mathcal{#1}}
  \ifthenelse{\equal{#2}{*}}{}{\repeat #2 }}
\def\repeat#1#2 {\expandafter\gdef\csname bf#1\endcsname {\boldsymbol{#1}}
  \ifthenelse{\equal{#2}{*}}{}{\repeat #2 }}
\def\repeat#1#2 {\expandafter\gdef\csname scr#1\endcsname {\mathscr{#1}}
  \ifthenelse{\equal{#2}{*}}{}{\repeat #2 }}
\newcounter{last-index}
\long\def\nonsection#1\section#2{\section{#2}}
\let\oldsection\section
\let\epsilon\varepsilon  
\let\isom\cong
\let\semidirect\rtimes
\let\normal\triangleleft
\newcommand\Orientation[1][]{
  \ifthenelse{\equal{#1}{}}
  {\CO_{p}}
  {\CO_{p,#1}}}
\DeclareMathOperator{\res}{res}
\DeclareMathOperator{\Diff}{Diff}
\DeclareMathOperator{\Cr}{Cr}
\DeclareMathOperator{\Bir}{Bir}
\DeclareMathOperator{\Hom}{Hom}
\DeclareMathOperator{\Tor}{Tor}
\newcommand{\HomSheaf}{\ensuremath{\underline{\CH\kern-1pt om}}}
\DeclareMathOperator{\Aut}{Aut}
\DeclareMathOperator{\Gal}{Gal}
\DeclareMathOperator{\GL}{GL}
\DeclareMathOperator{\SL}{SL}
\DeclareMathOperator{\PSL}{PSL}
\DeclareMathOperator{\im}{im}
\DeclareMathOperator{\rank}{rk}
\DeclareMathOperator{\Stab}{Stab}
\newcommand{\Fix}[2]{{#1}^{#2}}
\DeclareMathOperator{\Aff}{Aff}
\DeclareMathOperator{\Homeo}{Homeo}
\begin{document}
	
\title[Finite subgroups of homeomorphism groups]{Finite subgroups of
  the homeomorphism group of a compact topological manifold are almost nilpotent}
\author{Bal\'azs Csik\'os \and L\'aszl\'o Pyber \and Endre Szab\'o}
\maketitle

\begin{abstract}
  Around twenty years ago Ghys conjectured that finite subgroups of
  the diffeomorphism group of a
  compact smooth manifold $M$ have an abelian normal subgroup of index at
  most $a(M)$, where $a(M)$ depends only on $M$. First we construct a family of counterexamples to
  this conjecture including, for example, the product space $T^2 \times S^2$.
 
  Following the first appearance of our counterexample on the arXiv 
  Ghys put forward a revised conjecture, which predicts only the existence of a nilpotent normal subgroup of index at
  most $n(M)$.  Our main result is the proof of the revised Ghys
  conjecture. More generally, we show that the same result
  holds for homeomorphism groups of not necessarily compact topological manifolds with finitely generated homology groups.  

  Our proofs are based on finite group theoretic results which provide a general strategy for proving similar Jordan-type theorems.
\end{abstract}

\footnote{
We would like to dedicate this work to our teacher, Gabriella Thiry
(Gabi néni).
}

\oldsection{Introduction}
The following is a summary of two classical results due to
H.~Min\-kowski \cite{Minkowski}
and C. Jordan \cite{jordan1877memoire}.
See \cite{Guralnick_Lorenz} and \cite{breuillard2011Jordan}
for a modern rendering of Minkowski's and Jordan's original arguments.

\begin{thm}[Minkowski, Jordan]
  \label{thm:Minkowski-Jordan}
  If $K$ is a number field, then there is a constant $B=B(d,K)$
  such that for any finite group $G<\GL(d,K)$, one has $|G|\le B$.
  If $K$ is an arbitrary field of characteristic zero, then there is a
  constant $J=J(n)$ such that for any finite group \break
  $G<\GL(d,K)$, 
  there exists an abelian normal subgroup $A\le G$ of index at most $J$.
\end{thm}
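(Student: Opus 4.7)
\medskip\noindent\emph{Proof plan.} For Minkowski's statement, the plan is reduction modulo a prime ideal of $\CO_K$. Given a finite subgroup $G<\GL(d,K)$, the $\CO_K$-module $L=\sum_{g\in G}g\cdot\CO_K^d\subset K^d$ is $G$-invariant and of full rank, so after conjugation one may assume $G<\GL(d,\CO_K)$. For a prime ideal $\mathfrak{p}\subset\CO_K$ of residue characteristic $\ell$ and a sufficiently high power $\mathfrak{p}^k$, I would show that the kernel of the reduction map $\GL(d,\CO_K)\to\GL(d,\CO_K/\mathfrak{p}^k)$ is torsion-free: for $g=I+\pi X$ with $g^p=I$, binomial expansion and a comparison of $\mathfrak{p}$-adic valuations forces $g=I$ (the cases $p\ne\ell$ and $p=\ell$ are handled separately, the latter requiring $k$ large relative to the ramification of $\ell$). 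Hence $G$ injects into the finite group $\GL(d,\CO_K/\mathfrak{p}^k)$, giving the bound $|G|\le B(d,K)$.

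For Jordan's statement I would run the classical Jordan--Schur commutator-contraction argument in a compact unitary group. First reduce to $K=\BC$: the entries of the elements of $G$ lie in a finitely generated subfield of $K$, which embeds in $\BC$. Averaging any positive-definite Hermitian form over $G$ then places $G$ inside $U(d)$ after conjugation. The basic estimate, in the operator norm, is
\[
   \|ghg^{-1}h^{-1}-I\|\;\le\;2\,\|g-I\|\cdot\|h-I\|\qquad(g,h\in U(d)),
\]
obtained from $ghg^{-1}h^{-1}-I=\bigl((g-I)(h-I)-(h-I)(g-I)\bigr)g^{-1}h^{-1}$ together with $\|g^{-1}\|=\|h^{-1}\|=1$. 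Fix $\epsilon<1/2$ and set $S=\{g\in G:\|g-I\|<\epsilon\}$; since the operator norm is $U(d)$-conjugation invariant, $S$ is $G$-stable and $H:=\langle S\rangle$ is normal in $G$. Cover $U(d)$ by $J(d)$ balls of radius $\epsilon/2$; any two elements of $G$ in a common such ball satisfy $\|g_1g_2^{-1}-I\|<\epsilon$, so $g_1g_2^{-1}\in S\subset H$ and the two lie in the same coset, giving $[G:H]\le J(d)$. Iterating the commutator estimate along the derived series of $H$ places iterated commutators in smaller and smaller balls; combined with the minimal-element trick (the $h_0\in H\setminus\{I\}$ minimizing $\|h_0-I\|$ satisfies $\|[s,h_0]-I\|<\|h_0-I\|$ for any generator $s\in S$, hence $[s,h_0]=I$) one concludes that $H$ is abelian.

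The genuinely delicate point is the final upgrade from nilpotency to abelianness of $H$. The raw commutator iteration only controls the derived length, and the minimal-element trick on its own produces merely a non-trivial center. A clean way to close the gap is to pass to the Lie algebra $\mathfrak{u}(d)$ via the logarithm, valid in a small enough neighborhood of $I$: the group commutator estimate becomes a Lie-bracket estimate, and a scaling argument in $\mathfrak{u}(d)$ (where length is additive under integer multiples) forces the logarithms of the generators of $H$ to commute pairwise. Keeping the constants $\epsilon$ and $J(d)$ uniform in $G$ while carrying this out is where the main technical care is concentrated.
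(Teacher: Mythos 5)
The paper itself contains no proof of this statement---it is the classical Minkowski--Jordan theorem, cited to \cite{Minkowski}, \cite{jordan1877memoire} with modern renderings in \cite{Guralnick_Lorenz} and \cite{breuillard2011Jordan}---so your proposal can only be measured against those standard arguments, and in outline you follow them: invariant lattice plus reduction modulo $\mathfrak{p}^k$ for Minkowski, unitarization plus the commutator-contraction (Jordan--Schur) argument for Jordan. Two repairs are needed in the first half, one cosmetic and one real. The lattice $L=\sum_{g\in G}g\cdot\CO_K^d$ is a finitely generated full-rank $\CO_K$-module, but over a Dedekind domain with nontrivial class group it need not be \emph{free}, so ``after conjugation $G<\GL(d,\CO_K)$'' is not justified as written; the standard fix is to localize at $\mathfrak{p}$, where $L_\mathfrak{p}$ is free over the discrete valuation ring $\CO_{K,\mathfrak{p}}$, or equivalently to reduce directly via $G\to\Aut\big(L/\mathfrak{p}^kL\big)\isom\GL\big(d,\CO_K/\mathfrak{p}^k\big)$. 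With that change your valuation comparison (taking $k>e/(\ell-1)$, $e$ the ramification index, in the case $p=\ell$) is the correct classical computation.

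The genuine gap is exactly where you locate it, and your proposed patch does not yet close it. The minimal-element trick, as you state it, produces only a central element of $H$; and ``iterating the commutator estimate along the derived series'' fails outright, because general elements of $H=\langle S\rangle$ are products of elements of $S$ and need not lie near $I$, so the contraction estimate does not apply to them. The logarithm/scaling plan has the same hole: the contraction estimate alone gives no control on \emph{powers} $c^n$ of a commutator, which is what ``additivity under integer multiples'' would need. The missing idea is the following centrality step. Among the finitely many non-commuting pairs $(g,h)\in S\times S$ choose one minimizing $\big\|[g,h]-I\big\|$, and set $c=[g,h]\ne I$; then $\|c-I\|\le2\epsilon^2<\epsilon$, so $c\in S$, and if $[c,g]\ne I$ the pair $(c,g)$ would violate minimality since $\big\|[c,g]-I\big\|\le2\epsilon\|c-I\|<\|c-I\|$; hence $c$ commutes with $g$ and $h$, so it is central in $\langle g,h\rangle$, which yields $g^nhg^{-n}=c^nh$, i.e.\ $c^n=[g^n,h]$ for all $n$. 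Since $\|g^n-I\|\le2$ for unitaries, the contraction estimate now gives the uniform bound $\|c^n-I\|\le4\epsilon$ for \emph{all} $n$. But the eigenvalues of $c$ are roots of unity, and any nontrivial root of unity has a power at distance at least $\sqrt3$ from $1$ (some power has argument in $[2\pi/3,4\pi/3]$); so after shrinking to $\epsilon<\sqrt3/4$ we get $c=I$, a contradiction. Thus the elements of $S$ pairwise commute and $H$ is abelian, with $\epsilon$ and the covering number $J(d)$ uniform in $G$. If you prefer your Lie-algebra formulation, the same centrality step is what licenses it, since $\log$ converts the uniformly bounded powers $c^n$ into $n\log c$ and forces $\log c=0$.
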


Let us say that a group $\CG$ has the \emph{Jordan property} if there
is a constant $a(\CG)$ such that any finite subgroup $G$ of $\CG$ has an
abelian normal subgroup of index at most $a(\CG)$. More informally,
an infinite group $\CG$ has the Jordan property if all of its finite
subgroups are ``almost'' abelian. This terminology was introduced a few
years ago by Popov \cite{popov2010makar}.
Jordan's theorem states that the groups $\GL(d,\BC)$ have the Jordan property.
Boothby and Wang
\cite{boothbyWilliamWhang1964ConnectedLieGroupsAreJordan}
proved that connected Lie groups have
the Jordan property (see also \cite{lee1976torsion-subgroups-Lie-groups}).
Moreover, connected algebraic groups have
the Jordan property \cite{Meng-Sheng-Zhang2015jordan}.

Around twenty years ago \'Etienne Ghys
conjectured in several lectures (see  \cite{Ghys}) that for any
compact smooth manifold $M$,
there is a constant $a(M)$ such that any finite group acting smoothly and
effectively on $M$ has an abelian normal subgroup of index at
most $a(M)$, that is, the diffeomorphism group of $M$ has the Jordan
property. This conjecture first appeared in print in a 2011 survey of
Fisher on the Zimmer program \cite{Fisher}.

The particular case in which $M$ is a
sphere was also independently asked in several talks by Walter Feit
and later by Bruno Zimmermann. A positive answer to the
Feit-Zimmermann question was recently obtained by Mundet i Riera
\cite{Riera_spheres}.
Zimmermann \cite{Zimmermann} has shown
using Thurston's geometrization conjecture
(proved by Perelman) that the Ghys
conjecture holds for compact $3$-manifolds. Mundet i Riera has confirmed the Ghys conjecture in several
other cases: for tori \cite{Riera_torus}, acyclic manifolds, homology spheres and
manifolds with nonzero Euler characteristic \cite{Riera_spheres}.

Despite the rising expectations,
the authors of the present paper proved in \cite{CsPySz}
that the conjecture is false:
the diffeomorphism group of $T^2\times S^2$
does not have the Jordan property.
Our counter-example was inspired by an algebro-geometric construction
of Yu.~G.~Zarhin \cite{Zarhin},
who proved that the birational automorphism group of $E\times\BP^1$,
where $E$ is an elliptic curve, does not have the Jordan property.

In view of the above results, Mundet i Riera \cite{Riera_spheres}
posed the intriguing problem to
characterize compact smooth manifolds for which Ghys' conjecture is true.

Following the appearance of our counterexample \cite{CsPySz},
Ghys revised his conjecture.
In a talk \cite{Ghys-lecture}, he suggested the following.

\begin{conj} [Ghys]
  Let $M$ be a compact smooth manifold. Then any finite
  subgroup of $\Diff(M)$ has a nilpotent normal subgroup of index at most
  $n(M)$.
\end{conj}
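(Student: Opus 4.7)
The plan is to split the argument into a cohomological reduction followed by a finite-group-theoretic extraction of the nilpotent kernel, with Smith theory serving as the bridge between them.

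First, I would linearize on cohomology. Since $M$ is compact smooth, the total Betti number $b(M):=\sum_{i}\dim_{\BQ}H^{i}(M;\BQ)$ is finite, and any finite $G\le\Diff(M)$ induces a representation $\rho\colon G\to\GL(b(M),\BQ)$. The Minkowski half of \fref{thm:Minkowski-Jordan} bounds $|\rho(G)|\le B(b(M),\BQ)$, so $G_{1}:=\ker\rho$ is normal in $G$ with index depending only on $M$, and $G_{1}$ acts trivially on $H^{*}(M;\BQ)$. The problem is thereby reduced to proving that a finite group of diffeomorphisms of $M$ acting trivially on rational cohomology has a nilpotent normal subgroup of bounded index.

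Next, I would use Smith theory to bound the $p$-local structure of such a $G_{1}$. For every prime $p$ and every elementary abelian $p$-subgroup $E\le G_{1}$, Smith's inequality gives
\[
  \sum_{i}\dim_{\BF_p}H^{i}(\Fix{M}{E};\BF_p)\;\le\;\sum_{i}\dim_{\BF_p}H^{i}(M;\BF_p),
\]
and iterating along a subgroup chain bounds the rank of $E$ by a constant depending only on $M$. Combined with the rational cohomological triviality of $G_{1}$, this should deliver a uniform bound on the sectional $p$-rank of $G_{1}$ valid for all primes $p$ simultaneously, together with finer information about how $p$-elements act on mod-$p$ cohomology of $M$ and of fixed-point sets.

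The final step should be a purely group-theoretic theorem converting such local rank bounds (together with whatever residual information the manifold action provides) into a nilpotent normal subgroup of bounded index. I would expect this to proceed via the classification of finite simple groups to control the possible composition factors, followed by a Fitting-subgroup argument in the reduction to the soluble case. The main obstacle lies precisely here: bounded $p$-rank alone does not force almost nilpotency --- for instance, the groups $\PSL(2,\BF_q)$ have small $p$-rank yet large non-abelian simple quotients --- so some additional topological input must exclude such simple composition factors. Identifying that extra ingredient, presumably via Smith-theoretic obstructions to non-soluble actions on $M$ or via a more refined cohomological linearization on the fixed-point sets of $p$-subgroups, is where I expect the real novelty of the argument to live, and it is consistent with the authors' statement that their proofs rest on new finite group theoretic results of independent interest.
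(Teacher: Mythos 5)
Your first two steps do match the paper's opening moves: the Minkowski linearization on homology is \fref{lem:Cohomologically-trivial-actions-Minkowski}, and the rank bound for all finite subgroups (not just elementary abelian ones) is \fref{thm:Mann-Su_v0}. But there is a genuine gap exactly where you place your ``final step,'' and it is not a missing ingredient so much as the missing majority of the proof. The group-theoretic theorem you postulate --- bounded rank plus residual topological data implies almost nilpotency --- does not exist in that form, and the paper does not prove one. What it proves instead (\fref{thm:Nilpotent-or-Special-by-cyclic_v0}) is a \emph{dichotomy}: a finite group of rank $r$ either has a nilpotent normal subgroup of index at most $I(r,T)$, or contains a subgroup $P\semidirect\BZ_m$ with $P$ a \emph{special} $p$-group of order at most $p^{2r}$, $m$ a prime power coprime to $p$, and the image of $\BZ_m$ in $\Aut(P)$ of order at least $T$. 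Your $\PSL(2,q)$ worry is resolved \emph{inside} this group theory (via CFSG, the Liebeck--Nikolov--Shalev product decomposition, and a centralizer count in the Borel subgroup of $\SL(2,q)$, reducing everything to soluble groups); nonabelian simple composition factors are not what the topology must exclude. The topological burden, which your proposal leaves entirely unidentified, is to rule out the second horn: to show that $P\semidirect\BZ_m$ with large image in $\Aut(P)$ cannot act effectively on $M$. This occupies Sections 4--6 of the paper: first a reduction to the open locus $U\subseteq M$ where $P$ acts freely, removing fixed-point sets of order-$p$ stabilizers one at a time by induction over $\Stab(P,M)$, using Borel's fixed point formula (\fref{prop:Borel-fixpoint-formula}) and a character computation with the edge homomorphism to keep control of the $\BZ_m$-action on $H_c^*(U;\Orientation)$ (\fref{lem:action-of-H-on-cohomology-of-U-is-trivial}); then, for free actions, an analysis of the Borel spectral sequence $\BE_2^{i,j}=H^i\big(P;H_c^j(M;\Orientation)\big)\Rightarrow H_c^{i+j}(M/P;\Orientation)$, where vanishing of $\BE_\infty^{i,j}$ above the top dimension forces cancellations that drag every irreducible representation of the cyclic quotient into boundedly many $\BE_2$ terms, yielding $m'\le\binom{d+r}{r}^2$ (\fref{lem:elementary-p-by-cyclic-free-action}), followed by an induction on $|P|$ through the center (\fref{lem:nilpotent-by-cyclic-free-action}).

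A second, quieter defect: you work with rational cohomology of the compact manifold $M$ throughout, but the reduction to free actions proceeds by \emph{deleting} fixed-point sets, which destroys compactness and all control of integral or rational homology. This is precisely why the paper proves the more general \fref{thm:Ghys-conjecture-open-manifolds} for open manifolds with finitely generated homology, and why it works with compactly supported cohomology with coefficients in the mod~$p$ orientation sheaf $\Orientation$ --- rational coefficients would also discard the mod-$p$ information that Smith theory and the spectral-sequence cancellation argument require. So: your skeleton is right in outline and your diagnosis of where the novelty lies is honest, but the proposal stops at exactly the point where the proof begins.
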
 

Note that Jordan's theorem can be derived as a corollary of the
classical Zassenhaus lemma, which states  that there exists a neighborhood $\Omega$ of the identity in $\GL(d,\BR)$ such that if $\Gamma$ is a discrete
subgroup of $\GL(d,\BR)$, then $\Gamma\cap\Omega$ generates a nilpotent subgroup. Hence the revised Ghys conjecture is related to earlier work of Ghys aiming at obtaining
analogues of the Zassenhaus lemma for analytic diffeomorphism groups
(see \cite{Fisher}).

It was suggested by Fisher \cite{Fisher} that for the original question of Ghys,
it would be more natural to ask about finite groups of homeomorphisms
and not assume the differentiability of the maps.

Our main result is a proof of the revised Ghys conjecture. Actually, we
prove the following stronger result.

\begin{thm}\label{thm:Ghys-conjecture-nilpotent}
  Let $M$ be a compact topological manifold.
  If a finite group $G$ acts continuously and effectively on $M$,
  then it has a nilpotent normal subgroup $N\normal G$
  of index at most $n(M)$,
  where $n(M)$ depends only on the homotopy type of $M$.
\end{thm}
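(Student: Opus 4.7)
The plan is to reduce to finite group theory via the $G$-action on homology, and then invoke a structural result about finite groups satisfying strong $p$-local constraints. Since $M$ is compact, the graded integral homology $H=H_*(M;\BZ)$ is a finitely generated abelian group, so $\Aut(H)$ embeds in $\GL(N,\BZ)$ for some $N$ depending only on the homotopy type of $M$. The action of $G$ on $M$ induces $\rho\colon G\to\Aut(H)$, and by the Minkowski part of Theorem~\ref{thm:Minkowski-Jordan} the image $\rho(G)$ has order bounded by a constant $B=B(N,\BQ)$. Setting $K=\ker\rho$, we get a characteristic subgroup of $G$ of bounded index on which $G$ acts trivially on integral homology. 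Any nilpotent characteristic subgroup of $K$ is automatically normal in $G$, so it suffices to find a nilpotent subgroup of bounded index in $K$ (then pass to its normal core in $G$).

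Second, I would extract $p$-local constraints on $K$ from the fact that it acts on $M$ trivially on homology. For each prime $p$, Smith theory and Borel's localization formula, applied to elementary abelian $p$-group actions on the finite-dimensional space $M$, should yield a bound $r_p(M)$ on the $p$-rank of $K$ depending only on the $\BF_p$-homology of $M$. More refined Smith-theoretic arguments --- inductively analyzing fixed point sets along the subgroup lattice of a $p$-subgroup --- should yield additional structural data: bounds on dimensions of fixed sets, restrictions on centralizers of $p$-elements, and constraints on the number of simple components $K$ can support.

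The heart of the proof is the third, finite group theoretic step: showing that these local constraints force $K$ to have a nilpotent subgroup of bounded index. Bounded $p$-rank alone is insufficient --- $\PSL(2,\BF_q)$ has rank $O(1)$ for every $q$ and is simple --- so the argument must genuinely use the stronger topological input. I would invoke the Classification of Finite Simple Groups to enumerate candidate non-abelian composition factors of $K$ (simple groups of Lie type of bounded Lie rank, alternating groups of bounded degree, and finitely many sporadics), and then rule out unboundedly many such factors by translating the Smith-theoretic fixed-point obstructions into group-theoretic contradictions. This bounds the layer $E(K)$ of the generalized Fitting subgroup; iterating the analysis on $K/F(K)$ should then bound the Fitting length, producing a nilpotent normal subgroup of $K$ of bounded index.

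The main obstacle is precisely this bridge from topology to group theory in the third step: turning Smith-theoretic data into obstructions sharp enough to rule out arbitrarily complicated non-nilpotent composition factors, and doing so \emph{uniformly} in the input parameters $r_p(M)$ so that a single explicit bound $n(M)$ emerges. This uniformity, rather than any single structural fact, is what I expect to be the core technical challenge --- and it is almost certainly the point at which the ``general strategy for proving Jordan-type theorems'' advertised in the abstract enters.
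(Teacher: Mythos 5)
Your first step (Minkowski applied to the $G$-action on $H_*(M;\BZ)$) and the rank bound in your second step match the paper's opening moves. The gap is in your third step, and it is twofold. First, the group theory: bounding the non-abelian composition factors of $K$ --- the layer $E(K)$ --- and then the Fitting length cannot produce a nilpotent subgroup of bounded index, because the obstruction to being nilpotent-by-bounded already lives inside \emph{solvable} groups of Fitting length two. The affine groups $\BZ_p\semidirect\BZ_{p-1}$ have rank $2$, all composition factors cyclic, and Fitting length $2$, yet their largest nilpotent subgroup is $\BZ_p$, of index $p-1$; so a group can satisfy everything your CFSG analysis delivers and still be arbitrarily far from nilpotent. (As an aside, $\PSL(2,\BF_q)$ does not have rank $O(1)$ for all $q$: its Sylow $p$-subgroup is elementary abelian of rank $e$ when $q=p^e$; your point is correct for $q$ prime.) The paper's Reduction Theorem (\fref{thm:Nilpotent-or-Special-by-cyclic_v0}) is engineered exactly around this difficulty: CFSG enters only to reduce to the solvable case (via Shalev's structure theorem and a product decomposition into copies of $\SL(2,q)$), and the conclusion is that the \emph{only} configurations that must be excluded topologically are solvable subgroups $P\semidirect\BZ_m$ with $P$ a special $p$-group, $|P|\le p^{2r}$, and $\BZ_m$ a $q$-group with large image in $\Aut(P)$. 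Your outline never isolates these, and ``bounded Fitting length'' is strictly weaker than what the theorem asserts.

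Second, the topology: you propose to rule out the bad subgroups by ``Smith-theoretic fixed-point obstructions,'' but the critical configurations can act \emph{freely}, leaving no fixed points to obstruct anything --- the Heisenberg-type groups $G_n$ of Section 2 act freely on $T^2\times S^3$. What actually kills a free action of $P\semidirect\BZ_m$ with large twisting is a representation-theoretic analysis of the spectral sequence $\BE_2^{i,j}=H^i\big(P;H_c^j(M;\Orientation)\big)\Rightarrow H_c^{i+j}(M/P;\Orientation)$ in \fref{lem:elementary-p-by-cyclic-free-action} and \fref{lem:nilpotent-by-cyclic-free-action}: since the abutment vanishes above degree $\dim M$, the $\BZ_m$-isotypic components on the $\BE_2$ page must cancel, which forces every irreducible representation of the cyclic quotient to appear in a bounded range of degrees and hence bounds $m$. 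Smith theory and Borel's fixed point formula appear only in the separate step reducing to free actions (Section 6), where fixed-point sets of carefully chosen cyclic subgroups are removed from $M$ --- destroying compactness, which is precisely why the paper must prove the non-compact \fref{thm:Ghys-conjecture-open-manifolds} even to obtain the compact case. Your compact-only outline has no mechanism for either the free-action analysis or this forced passage to open manifolds.
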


For our proof to work it is essential to allow non-compact manifolds
as well.
Therefore we prove the following more general result,
which implies \fref{thm:Ghys-conjecture-nilpotent} immediately.

\begin{thm}\label{thm:Ghys-conjecture-open-manifolds}
  Let $M$ be a (possibly non-compact) topological
  manifold whose homology group $H_*(M;\BZ)$ is finitely generated (as an abelian group).
  If a finite group $G$ acts continuously and effectively on $M$,
  then it has a nilpotent normal subgroup $N\normal G$
  of index at most $n\big(\dim(M),\allowbreak H_*(M;\BZ)\big)$,
  where $n\big(\dim(M),H_*(M;\BZ)\big)$ depends
  only on $\dim(M)$ and $H_*(M;\BZ)$.
\end{thm}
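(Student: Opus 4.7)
My proof plan combines Smith-theoretic rank bounds for $p$-subgroups with the Minkowski--Jordan theorem applied to the action on integral homology, and a finite-group-theoretic conversion step at the end. The first ingredient: by the Mann--Su rank theorem (a consequence of P.\,A.\ Smith theory), every elementary abelian $p$-subgroup of $G$ acting on $M$ has rank bounded in terms of $\dim M$ and the mod-$p$ Betti numbers of $M$. Since $H_*(M;\BZ)$ is finitely generated, this yields a uniform rank bound on the Sylow subgroups of $G$ across all primes, and in fact restricts the primes that may divide $|G|$ to a finite set depending on $\dim M$ and $H_*(M;\BZ)$.

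The second ingredient: the induced action of $G$ on $H_*(M;\BZ)$ defines a homomorphism $G\to \Aut H_*(M;\BZ)$. Since $H_*(M;\BZ)$ is finitely generated, $\Aut H_*(M;\BZ)$ fits into a semidirect product of a finite group with a subgroup of some $\GL(N,\BZ)$, so by \fref{thm:Minkowski-Jordan} its finite subgroups have order bounded in terms of $H_*(M;\BZ)$ alone. Thus $G$ contains a subgroup $K$ of bounded index acting on $M$ trivially on integral homology. A further application of Smith theory then shows that each Sylow $p$-subgroup of $K$ has a large fixed-point set inside $M$, giving an additional structural constraint beyond the bare rank bound.

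The remaining task, and the main obstacle, is algebraic: a finite group $K$ with bounded Sylow rank and the geometric constraints above must be shown to have a nilpotent normal subgroup of bounded index. Bounded Sylow rank alone forces only solvability, not nilpotency, as witnessed by the family $(\BZ/p)^r \semidirect \BZ/q$. The geometric information --- that the Sylow subgroups of $K$ have large, cohomologically controlled fixed-point sets on $M$ --- must be translated into a purely group-theoretic hypothesis strong enough to force Sylow subgroups of different primes to centralize each other up to bounded index, so that $K$ becomes essentially a direct product of its Sylow subgroups. Constructing such a framework --- general enough to serve as a template for other Jordan-type theorems, as advertised in the abstract --- is the core technical contribution, and I expect it to account for the bulk of the proof.
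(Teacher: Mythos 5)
Your first two ingredients (the Mann--Su rank bound and the Minkowski reduction to a subgroup $K$ acting trivially on $H_*(M;\BZ)$) match the paper's opening moves, but the proposal then rests on two claims that are false, and they fail precisely in the critical case. First, the rank bound does \emph{not} restrict the primes dividing $|G|$ to a finite set: $\BZ_p$ acts effectively on $S^2$ for every prime $p$, and the paper's own counterexamples in \fref{sec:counter-example} exhibit the Heisenberg groups $G_n=(\BZ_n\times\BZ_n)\ltimes\BZ_n$ acting on one fixed manifold for infinitely many $n$. Second, Smith theory does \emph{not} give the Sylow subgroups of $K$ large (or any) fixed-point sets: $p$-groups acting trivially on homology can act freely --- indeed the paper points out that the groups $G_n$ act \emph{freely} on $T^2\times S^3$, which is exactly why the approach of Mundet i Riera and S\'aez-Calvo via non-free actions is limited to dimension $4$. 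So the ``geometric constraint'' you intend to translate into group theory evaporates in the hardest case, and the dangerous subgroups of affine cyclic type $E\semidirect C$, which you correctly identify as the obstruction, cannot be excluded this way.

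What the paper does instead is the content you left as an ``expected'' framework. Group-theoretically, the Reduction Theorem (\fref{thm:Nilpotent-or-Special-by-cyclic_v0}, proved via Shalev's CFSG-based structure theorem for groups of bounded rank) shows that failure of nilpotency-by-bounded forces a subgroup $P\semidirect\BZ_m$ with $P$ a special $p$-group of order at most $p^{2r}$ and the image of $\BZ_m$ in $\Aut(P)$ large. Geometrically, one then bounds that image directly, with no appeal to fixed points of $P$: first one reduces to the case where $P$ acts freely by excising fixed-point sets of order-$p$ stabilizers (found via Borel's fixed point formula, \fref{prop:Borel-fixpoint-formula}, and controlled via \fref{prop:number-of-stapilizers-is-bounded} and \fref{lem:action-of-H-on-cohomology-of-U-is-trivial}, losing compactness and integral homology along the way --- the reason the theorem is stated for open manifolds with a mod $p$ cohomology bound); then, for the free action, one runs the Borel spectral sequence $\BE_2^{i,j}=H^i\big(P;H_c^j(M;\Orientation)\big)\Rightarrow H_c^{i+j}(M/P;\Orientation)$ and uses the vanishing of $\BE_\infty^{i,j}$ for $i+j>\dim M$ to force representation-theoretic cancellations that bound the order of the image of $\BZ_m$ in $\Aut(P)$ (\fref{lem:elementary-p-by-cyclic-free-action}, then induction on $|P|$ in \fref{lem:nilpotent-by-cyclic-free-action}). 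None of this machinery is present or replaceable in your outline, so the proposal has a genuine gap at its core step.
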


In our counterexamples showing that the diffeomorphism group of a
compact smooth manifold $M$ may not have the Jordan property (see \fref{sec:counter-example}), the finite
groups acting on $M$ are $2$-step nilpotent as are the groups in numerous further
examples obtained by Mundet i Riera \cite{Riera2016symplectic} and
D\'avid R. Szab\'o \cite{Szabo} (see also \cite{Szabo-David-Thesis})
based on our construction. This raises the question whether \fref{thm:Ghys-conjecture-nilpotent} can be strengthened as follows.
\begin{question}
  \label{question:Ghys-with-2-step-nilpotent}
  Let $M$ be a compact smooth manifold. Is there a bound $n'(M)$ such that any finite
  subgroup of $\Diff(M)$ has a 2-step nilpotent normal subgroup of index at most
  $n'(M)$?
\end{question} 
In view of the results of the Ph.D. thesis of
D\'avid R. Szab\'o~\cite{Szabo-David-Thesis} an
affirmative answer would be best possible.

An affirmative answer to the above question was proved by Mundet i Riera and S\'aez-Calvo \cite{Riera_4d} in dimension $4$. Their proof exploits the fact that in dimension $4$, the class of freely acting finite subgroups of $\Diff(M)$ do have the Jordan property, therefore if a subgroup is far from abelian, then it  must have elements with non-empty fixed point set. We remark that this approach is limited to dimension $4$, as we shall see in  \fref{sec:counter-example} that the Heisenberg type groups justifying the failure of the Jordan property of the diffeomorphism group of $T^2\times S^3$ act freely on the product $T^2\times S^3$.

\smallskip

The question whether groups related to some geometric structure
have some variant of the Jordan property
has been asked in other contexts as well.  Serre \cite{serre2008Bourbaki-Cremona}
proved that the plane Cremona group
$\Cr_2(\BC)$, the group of birational automorphisms of 
the projective plane $\BC\BP^2$,
has the Jordan property.
He asked if this also holds for the higher rank
Cremona groups $\Cr_n(\BC)$.

Serre's question was
answered positively for the Cremona group $\Cr_3(\BC)$
by Prokhorov and Shramov \cite{Prokhorov-Shramov-Cremona}.
More generally, they have shown that if the BAB
(Borisov-Alexeev-Borisov) conjecture holds, then all Cremona groups
have the Jordan property. The BAB conjecture has been
confirmed by Birkar \cite{birkar2016Fanos-are-bounded}.
This, in particular, completes the proof of a
positive answer to the question of Serre.

Popov \cite{popov2014jordan}
extended Serre's question to birational automorphism groups of general varieties. 
He proved that the birational automorphism groups of complex algebraic
surfaces have the Jordan property, except for surfaces birationally
equivalent to one of the surfaces
$E\times\BP^1$, where $E$ is any elliptic curve.
Zarhin \cite{Zarhin} has shown, that $\Bir(E\times\BP^1)$ does not
have the Jordan property. 
 Later Prokhorov and Shramov \cite{Prokhorov-Shramov-Bir-X}
proved, that for any complex algebraic variety $X$,
all finite subgroups of $\Bir(X)$
have a soluble normal subgroup of bounded index.
Using their ideas, Guld \cite{Guld} has shown that actually finite
subgroups of $\Bir(X)$ have a nilpotent normal subgroup of bounded
index and nilpotency class at most two.
We consider this as a further indication that
\fref{question:Ghys-with-2-step-nilpotent}
is the right question to ask.

In a recent paper, Mundet i Riera \cite{Riera2016symplectic}
obtained similar results for the group of
symplectomorphisms of a compact symplectic manifold. In yet another
recent paper, Prokhorov and Shramov \cite{prokhorov2016cremona-3fold}
have classified algebraic threefolds for
which the group of birational automorphisms does not have the Jordan
property.

It turns out (see \fref{thm:Mann-Su_v0}) that the finite groups we
encounter have bounded rank in the following sense.

\begin{defn}
  The \emph{rank} of a finite group $G$ is the
  minimal integer $r$ such that every subgroup $H$ of $G$ is
  $r$-generated.  
\end{defn}

Various group-theoretic properties, like being abelian-by-bounded
or nilpotent-by-bounded can be established for finite groups $G$ of
bounded rank by proving that the same property holds for subgroups of $G$
with a very simple structure. This somewhat surprising principle has
been found independently by Mundet i Riera and Turull \cite{Riera_Turull}
and the authors of the present paper (see a remark concerning this in
\cite{Riera_Turull}). The results
of \cite{Riera_Turull} have found applications much earlier in
\cite{Riera_spheres}.
We waited with the publication of our variant till we could complete
the proof of the applications described here.

An essential tool in the course of proving
\fref{thm:Ghys-conjecture-open-manifolds}
is the following group theoretic reduction theorem,
which is our variant of the above principle.
This reduction allows us
to consider only very special type of groups in the geometrical arguments.
\begin{thm} [Reduction Theorem]
	\label{thm:Nilpotent-or-Special-by-cyclic_v0}
	Let $G$ be a finite group of rank
	$r$. Then for all $T > 0$, there is an integer $I(r,T) > 0$ such that
	one of the following holds.
	\begin{enumerate}[(a)]
		\item \label{item:1}
		$G$ has a nilpotent normal subgroup of index at most $I(r,T)$.
		\item \label{item:2}
		For some distinct primes $p$ and $q$ the group $G$ has a
		subgroup of the form $P\semidirect \BZ_m$,
		where $P$ is a special $p$-group, $m$ is a
		power of $q$, and the image of $\BZ_m$ in $\Aut(P)$ has order at
		least $T$.
		In particular, we have $|P|\le p^{2r}$.
	\end{enumerate}
\end{thm}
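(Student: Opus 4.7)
My plan is to combine three ingredients: a reduction to solvable groups via the classification of finite simple groups, the bounded Fitting height of solvable rank-$r$ groups, and a coprime-action argument that extracts the special $p$-subgroup carrying the large cyclic action.

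First, I would reduce to solvable $G$. By CFSG only finitely many nonabelian finite simple groups have rank at most $r$, so the solvable radical $R$ of $G$ has index bounded by some $c_0(r)$. Any nilpotent normal subgroup of $R$ of small index yields one in $G$ of index at most $c_0(r)$ times as large, and any subgroup of $R$ of the form described in alternative (b) is automatically a subgroup of $G$; so it suffices to prove the theorem for $G$ solvable, absorbing the loss into $I(r,T)$.

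Next, let $F=F(G)$ be the Fitting subgroup. By a classical result of Kov\'acs the Fitting height of a solvable rank-$r$ group is at most some $h=h(r)$. If $[G:F]\le I_1(r,T)$ for a suitably chosen threshold $I_1$, we take $N=F$ and land in alternative (a). Otherwise, pigeonholing along the $h$ layers of the upper Fitting series $1=F_0\normal F_1\normal\cdots\normal F_h=G$ yields some layer $F_i/F_{i-1}$ of very large order. As this layer is nilpotent of rank $\le r$, elementary counting in rank-bounded nilpotent groups produces a cyclic subgroup $\langle\bar\sigma\rangle\le F_i/F_{i-1}$ of some $q$-power order exceeding any preassigned function of $T$. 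Using the Schur--Zassenhaus theorem in the solvable group $G$, I would lift $\bar\sigma$ to a genuine $q$-element $\sigma\in G$ of the same order.

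The element $\sigma$ then acts by conjugation on $F_{i-1}$ and on each of its Sylow subgroups $O_p(F_{i-1})$; for $p\ne q$ these actions are coprime. A Hall--Higman-type stability analysis shows that if $\sigma$ induced automorphisms of order less than $T$ on every $O_p(F_{i-1})$ with $p\ne q$, then a bounded power of $\sigma$ would act trivially on $F_{i-1}$, contradicting $\bar\sigma$ having large order in $G/F_{i-1}$ via the iterated Fitting series. Hence some prime $p\ne q$ yields an induced automorphism of $O_p(F_{i-1})$ of order $\ge T$. By Burnside's basis theorem $\sigma$ acts with the same order on the elementary abelian Frattini quotient $O_p(F_{i-1})/\Phi(O_p(F_{i-1}))$. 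Choosing a minimal $\sigma$-invariant characteristic subgroup of a $\sigma$-invariant Sylow $p$-subgroup of $G$ (for instance $\Omega_1$ of an appropriate term of its upper central series) on which $\sigma$ still acts with order $\ge T$ produces a special $p$-subgroup $P\le G$ of rank $\le r$; then $\langle P,\sigma\rangle\le G$ has the required form $P\semidirect\BZ_m$. The bound $|P|\le p^{2r}$ follows because both $P/\Phi(P)$ and the elementary abelian centre $\Phi(P)=Z(P)$ have rank at most $r$.

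The principal obstacle is the last extraction step: passing from a coprime, large-order action of $\sigma$ on a rank-bounded $p$-group to an honest special $p$-subgroup of $G$ on which $\sigma$ still acts with order $\ge T$. This is where one must exploit the specific structure of coprime actions on nilpotent groups, presumably via the same reduction principle shared with Mundet i Riera and Turull~\cite{Riera_Turull}.
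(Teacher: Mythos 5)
There are two genuine gaps, one at each end of your argument. The first is the opening reduction: it is \emph{not} true that only finitely many nonabelian finite simple groups have rank at most $r$. For every prime $p$ the group $\PSL(2,p)$ has rank at most $3$ (all its subgroups are cyclic, dihedral, one of $A_4$, $S_4$, $A_5$, or metacyclic Borel subgroups), so there are infinitely many simple groups of bounded rank, and the solvable radical of a rank-$r$ group need not have index bounded in terms of $r$ alone — $G=\PSL(2,p)$ itself has trivial solvable radical and unbounded order. This is exactly why the paper's reduction to the soluble case (\fref{prop:2-2}) is \emph{conditional}: it first assumes the nilpotent-by-$t$ conclusion for all soluble subgroups, uses a computation inside the Borel subgroup of $\SL(2,q)$ to force $q\le 2t+1$ for any $\PSL(2,q)$-section, and only then bounds the semisimple layer of Shalev's structure theorem (\fref{thm:2-1}) via the Liebeck--Nikolov--Shalev product decomposition. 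So the soluble case must be proved first and then used to control the simple sections; your unconditional bounded-index solvable radical does not exist.

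The second gap is the claimed contradiction in your ``Hall--Higman-type stability analysis,'' which fails precisely in the non-coprime case. If $\sigma$ induces automorphisms of order $<T$ on every $O_p(F_{i-1})$ with $p\ne q$, then a bounded power $x=\sigma^{q^b}$ centralizes only the $q'$-part of $F_{i-1}$; its action on $O_q(F_{i-1})$ is not coprime and can have arbitrarily large $q$-power order even in bounded rank (already $\BZ_{q^n}$, of rank $1$, admits an automorphism of order $q^{n-1}$). Since the order of $\bar\sigma$ modulo $F_{i-1}$ merely divides the order of the automorphism $\sigma$ induces on all of $F_{i-1}$ (as $\CC_G(F(G))\le F(G)$ in soluble groups), all the largeness can be carried by the $q$-on-$q$ action, and no bounded power of $\sigma$ acts trivially on $F_{i-1}$ — so no contradiction, and no prime $p\ne q$ with a large coprime action is produced. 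Hall--Higman theory (e.g.\ \fref{thm:2-11}) requires coprime orders and gives nothing here. Ruling out this non-coprime escape for rank-bounded soluble groups is the actual heart of the theorem, and it is where the paper invests its effort: the Suprunenko-based analysis of primitive and completely reducible soluble linear groups (\fref{lem:2-4}, \fref{prop:2-6}) yields an abelian normal subgroup of $G/F(G)$ whose order is \emph{coprime} to the characteristic, and this coprimality, fed through \fref{thm:2-8} and \fref{thm:2-9}, is what guarantees that the affine cyclic sections $\Aff(E,C)$ are genuinely coprime. Your sketch has no substitute for this. Finally, the extraction step you flag as the ``principal obstacle'' is resolved in the paper not by taking $\Omega_1$ of central-series terms but by a minimal-quotient argument (\fref{lem:2-10}: a minimal preimage of an affine cyclic section is $P\semidirect\BZ_m$) followed by applying Hall--Higman to a minimal invariant subgroup on which the cyclic group acts nontrivially (\fref{rem:2-12}), which is what makes $P$ special; and, as a minor repair, your lift of $\bar\sigma$ cannot invoke Schur--Zassenhaus (no coprimality, since $q$ may divide $|F_{i-1}|$) — one should instead take a $q$-element in a Sylow $q$-subgroup of the preimage of $\langle\bar\sigma\rangle$.
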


A similar reduction theorem for checking the Jordan property was
obtained earlier by the present authors, and independently by Mundet i Riera and Turull \cite{Riera_Turull}.

\fref{thm:Nilpotent-or-Special-by-cyclic_v0} will be proved in  \fref{sec:jordan-type-theorems} as  \fref{thm:Nilpotent-or-Special-by-cyclic}. Both the proofs of \fref{thm:Mann-Su_v0} and \fref{thm:Nilpotent-or-Special-by-cyclic_v0} rely on group theoretic techniques, in particular, on the
Classification of Finite Simple Groups (CFSG). It is an interesting
question whether the use of CFSG can be avoided.
The generality of our statement suggests that, perhaps,
group theoretic arguments can be completely eliminated.
Moreover, the analogous algebro-geometric results
do not use CFSG at all.
This leads to the following:

\begin{question}
	Is there a geometric proof of \fref{thm:Ghys-conjecture-nilpotent}?
\end{question}

\subsection*{Scheme of the proof of	\fref{thm:Ghys-conjecture-open-manifolds}}
\label{sec:sketch-proof}

The starting point is to establish an upper bound $r$ on the rank of the
group $G$. For compact manifolds and abelian groups, it is a classical result of Mann-Su
\cite{Mann_Su}, our more general situation is tackled in \cite{CMPS}
(see \fref{thm:Mann-Su_v0} below).

The rank bound and \fref{thm:Nilpotent-or-Special-by-cyclic_v0}
imply, with a little bit of thought,
that it is enough to prove the theorem for groups of the form
$G\isom P\semidirect \BZ_m$
where $P$ is a $p$-group of size $|P|\le p^{2r}$ for some prime $p$,
and $m$ is not divisible by $p$.
So from now on $G$ is such a group.

Minkowski's bound (\fref{thm:Minkowski-Jordan}) gives us a subgroup
$H\le G$ of bounded index which acts trivially on $H^*(M;\BZ)$.
By the universal coefficient theorem, $H$ acts trivially on
$H_*(M;\BF_p)$ as well.
We replace $G$ with $H$, so from now on  $G$ acts trivially on $H_*(M;\BF_p)$.

Let $U\subset M$ be the largest open subset where the $P$-action is
free.
By a result of \cite{CMPS}, $\dim H_*(U;\BZ_p)$ is bounded from above,
and it is tempting to replace $M$ with $U$ to reduce our statement to
free actions, but  \cite{CMPS} gives us no control over the
$\BZ_m$-action on $H_*(U;\BZ_p)$.
In \fref{sec:Reduction-to-free-action} we study this situation.
The Borel Fixed Point Formula (see \fref{prop:Borel-fixpoint-formula})
gives us points whose stabilizer subgroups in $P$ are isomorphic to
$\BZ_p$. The main result of \cite{CMPS} implies that these cyclic
subgroups are $\BZ_{m'}$-invariant for some $\BZ_{m'}\le\BZ_m$ of
bounded index.
After a careful analysis of
the fixed point submanifolds of these cyclic subgroups
and the corresponding  
$\BZ_p\semidirect\BZ_{m'}$-actions around them,
we can safely remove these submanifolds from $M$.
Repeating this step inductively,
we arrive at $U$, and we deduce that
there is a subgroup $\BZ_{m''}\le\BZ_m$ of bounded index
which acts trivially on $H_*(U;\BZ_p)$
(see \fref{lem:action-of-H-on-cohomology-of-U-is-trivial}).
Finally, we replace $M$ with $U$,
and reduce the statement to the case of free action of $P$.

Note, that in the above reductions we lost control of the integer homology,
we only have a bound on the mod $p$ homology of $M$.
Moreover, even if we started with a compact manifold, we lost
compactness along the way.
So it is essential for our proof to generalize the conjecture of Ghys
and include open manifolds. 

So we are reduced to the following scenario. 
$G=P\semidirect \BZ_m$ as above, $G$ acts on
a topological manifold $M$ such that $\dim H_*(M;\BZ_p)=B$ is bounded, and $P$ acts freely.
Let $\BZ_{m'}$ be the image of the conjugacy action of $\BZ_m$ on $P$.
Then $G$ has a nilpotent subgroup of index $m'$,
namely $P\semidirect\BZ_{m/m'}=P\times\BZ_{m/m'}$,
so it is enough to bound $m'$.

\fref{lem:elementary-p-by-cyclic-free-action} tackles this for the
case when $P$ is an elementary abelian $p$-group.
The idea is the following.
It is more convenient to use Poincar\'e duality and
switch to cohomology.
We study the action of $\BZ_{m'}$
on the Borel spectral sequence calculating the cohomology of $M/P$
(see \fref{prop:Leray-spectral-sequence}).
For each $i,j$, one can easily determine the irreducible
representations of $\BZ_{m'}$
which show up in $\BE_2^{i,j}$. On the other hand,
$\BE_\infty^{i,j}=0$ for $i+j>n$, so a lot of cancellations must occur.
But an irreducible representation can be cancelled only by the same
representation appearing in certain other $\BE_2^{k,l}$ terms with
$k+l=i+j-1$.
Using general representation theory we find a bound $N$
such that all irreducible representations of $\BZ_{m'}$ must occur in
$\bigoplus_{i+j\le N}\BE_2^{i,j}$. This gives a bound on $m'$,
and proves the theorem for this case.

Finally, in the general case,
when $P$ is an arbitrary $p$-group of order $p^\rho$
for some $\rho\le2r$
(see \fref{lem:nilpotent-by-cyclic-free-action} for the precise statement)
we use induction on $\rho$.
Let $E$ be the subgroup consisting of the elements of order at
most $p$ in the center of $P$. This is a characteristic subgroup of
$P$, hence $\BZ_m$ normalizes it.
First we apply the already known case to the subgroup
$E\semidirect\BZ_m\le G$,
which gives us a subgroup $\BZ_{m_1}\le\BZ_m$ of bounded index
which commutes with $E$.
So $E$ is in the center of the subgroup $G_1=P\semidirect\BZ_{m_1}\le G$.
Then we apply the induction hypothesis to the $G_1/E$ action on the
manifold $M/E$,
and find a subgroup $\BZ_{m_2}\le\BZ_{m_1}$ of bounded index which
commutes with $P/E$.
Since $m_2$ is not divisible by $p$,
this $\BZ_{m_2}$ must commute with $P$ as well.
Therefore $P\semidirect\BZ_{m_2}=P\times\BZ_{m_2}$
is nilpotent.
This completes the induction.

\subsection*{Acknowledgements}
L.~Pyber and E. Szab\'o were supported by the National Research, Development and
Innovation Office (NKFIH) Grant K138596. E.~Szab\'o was also supported
by the NKFIH Grant K120697. B.~Csik\'os was supported by the NKFIH Grant K128862.
The project leading to this application has received funding from the European Research Council (ERC) under the European Union's Horizon 2020 research and innovation programme (grant agreement No 741420).

The authors are indebted to Andr\'as N\'emethi and Andr\'as Stipsicz for helpful and fruitful discussions.

\section{Counterexamples}
\label{sec:counter-example}

In this section, we construct a family of compact real analytic manifolds, the real analytic diffeomorphism groups of which do not have the Jordan property. These manifolds will be the total spaces of bundles over the $2$-dimensional torus $T^2=\BR^2/\BZ^2$ with compact fibers and structure group $G$, where $G$ is a compact connected Lie group with finite center acting effectively on the fibers. It will be shown that infinitely many of the Heisenberg type groups {$G_n=(\mathbb Z_n\times \mathbb Z_n)\ltimes \mathbb Z_n$} with multiplication rule
\begin{equation}\label{eq:Heisenberg}
(a,b,c)\cdot(a',b',c')=(a+a',b+b', c+c'+ab')
\end{equation}
have an effective real analytic action on such a manifold, while the index of an abelian subgroup of $G_n$ is at least $n$. 

The simplest examples of this type, the total spaces of the oriented $S^2$-bundles over $T^2$,   appeared first in the preprint of the authors \cite{CsPySz} motivated by the ideas of Zarhin \cite{Zarhin}. We remark that in the special case of the trivial $S^3$ bundle with structure group $\mathrm{SU}(2)$, our construction  provides a \emph{free} action of all the groups $G_n$ on the product manifold $T^2\times S^3$. This shows that the diffeomorphism group of $T^2\times S^3$ does not even have the weakened Jordan property considering only freely acting finite groups of diffeomorphisms. 
	
\subsection{Principal bundles over the torus}\label{subsec:bundles_over_torus}
There is a simple way to classify real analytic principal bundles over the torus $T^2$.
\begin{lem}\label{lem:bundle_classification}
	Let $G$ be a connected compact Lie group. Then the isomorphism classes of real analytic principal $G$-bundles over the torus $T^2$ are classified by the elements of the fundamental group $\pi_1(G)$.
\end{lem}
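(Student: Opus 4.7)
The plan is to deduce the classification from the topological computation $[T^2,BG]\cong\pi_1(G)$. I would first reduce from the real analytic to the continuous setting, using that for a compact Lie group $G$ and a compact real analytic manifold $X$, every continuous principal $G$-bundle over $X$ admits a compatible real analytic structure, unique up to real analytic isomorphism. This combines the classical equivalence of smooth and topological classifications for compact Lie structure groups with Grauert-type real analytic approximation of smooth maps and transition cocycles. Once the problem is topological, it remains to show $[T^2, BG] \cong \pi_1(G)$.

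For the latter I would use a clutching argument. Fix a point $p\in T^2$ and a small open real analytic disk $U_2\ni p$, and set $U_1 = T^2\setminus\{p\}$. The set $U_1$ deformation retracts onto the wedge $S^1\vee S^1$, and since $G$ is connected we have $[S^1,BG] = \pi_0(G) = 0$, so every principal $G$-bundle is trivial over $S^1$, hence also over $S^1\vee S^1$ and $U_1$. The bundle is also trivial over the contractible $U_2$. Thus any principal $G$-bundle $P\to T^2$ is determined by a transition function $\phi\colon U_1\cap U_2\to G$ on the punctured disk, which up to homotopy is a loop $S^1\to G$, giving a class $[\phi]\in\pi_1(G)$. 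Surjectivity of the assignment $P\mapsto[\phi]$ is immediate: given $c\in\pi_1(G)$, pick a real analytic representative loop and clutch trivial bundles over $U_1,U_2$, extending the loop real analytically to the annulus $U_1\cap U_2$ via the angular coordinate.

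For injectivity, suppose two bundles with transition functions $\phi_1,\phi_2$ are isomorphic. The isomorphism provides gauge transformations $g_i\colon U_i\to G$ with $\phi_2 = g_1^{-1}\phi_1 g_2$ on the overlap. Contractibility of $U_2$ makes $g_2$ null-homotopic, so in $\pi_1(G)$ we obtain $[\phi_2] = [\phi_1] - [g_1|_{\partial U_2}]$. The key point is that $\partial U_2$, viewed inside $U_1\simeq S^1\vee S^1$, represents the commutator $[a,b]$ of the two generators of $\pi_1(U_1)$; therefore $g_1|_{\partial U_2}$ represents a commutator of two loops in $G$, which vanishes in the abelian group $\pi_1(G)$. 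Hence $[\phi_1]=[\phi_2]$, proving the assignment $P\mapsto[\phi]$ is a bijection.

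The combinatorial content of the argument is short and is driven by the abelianness of $\pi_1(G)$, which kills the obstruction coming from the commutator attaching map of the $2$-cell of $T^2$. I expect the main technical obstacle to be the careful bookkeeping in the real analytic category: producing a genuinely real analytic clutching cocycle representing each $c\in\pi_1(G)$, and upgrading a continuous bundle isomorphism to a real analytic one without changing the $\pi_1(G)$-invariant. Both steps should follow from standard real analytic approximation results, but they need to be invoked rather than elided.
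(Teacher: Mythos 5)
Your proof is correct, but it follows a genuinely different route from the paper's. For the reduction from the real analytic to the topological category both arguments use the same fact (the paper cites Guaraldo for the coincidence of real analytic and topological classifications, which is exactly the existence-and-uniqueness statement you invoke via approximation), but for the topological classification the paper argues through the classifying space: $[T^2,BG]$ classifies bundles, $BG$ is simply connected since $G$ is connected, so collapsing the $1$-skeleton gives $[T^2,BG]=[T^2/T^2_{(1)},BG]=\pi_2(BG)\cong\pi_1(G)$. You instead give a hands-on clutching argument over $T^2=(T^2\setminus\{p\})\cup U_2$, trivializing over the punctured torus and the disk and reading off the transition loop. The two proofs hide the same two ingredients in different places: in your version, connectedness of $G$ gives triviality over the $1$-complex $S^1\vee S^1$, and the abelianness of $\pi_1(G)$ (automatic for a topological group) explicitly kills the class $(g_1)_*[a,b]$ coming from the commutator attaching map of the $2$-cell; in the paper's version these are absorbed into the simple connectivity of $BG$ and the cofibration-sequence argument behind $[T^2,BG]=[T^2/T^2_{(1)},BG]$, which the paper states without elaboration. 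Your approach buys a self-contained, elementary proof at the cost of bookkeeping (note also that clutching a priori yields the \emph{free} homotopy class of $\phi$, i.e.\ a conjugacy class in $\pi_1(G)$, and it is again abelianness that identifies this with an element of $\pi_1(G)$ -- worth a sentence); the paper's approach buys brevity via standard machinery. One simplification available to you: for surjectivity there is no need to construct the real analytic clutching cocycle by hand, since once the topological bundle with invariant $c$ exists, the Guaraldo-type equivalence you already invoked supplies the real analytic model.
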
\label{lem:induced_bundle_isomorphism} 
\begin{proof} Real analytic and topological classification of principal $G$-bundles coincide (see \cite{Guaraldo}), so we focus on the latter one. Principal $G$-bundles over the torus are classified by the homotopy classes of maps $T^2\to BG$ into a classifying space $BG$ of principal $G$-bundles. Consider the usual CW structure of $T^2$ with a single $2$-cell glued to the $1$-skeleton $T^2_{(1)}$ equal to a bouquet of two circles. Since $G$ is connected, $BG$ is simply connected, and $[T^2,BG]=[T^2/T^2_{(1)},BG]=\pi_2(BG)\cong \pi_1(G)$. 
\end{proof}

 If $H$ is a Lie group, $\xi=(E\to B)$ is a principal $H$-bundle, then for any smooth manifold $F$ equipped with a left $H$-action $\theta$, there is a fiber bundle $\xi_{\theta}[F]=(E\times_HF\to B)$ associated to the principal bundle $\xi$, having fibers diffeomorphic to $F$.  The correspondence between the principal $G$-bundles over $T^2$ and $\pi_1(G)$ is canonical in the following sense. If $\phi\colon H\to G$ is a smooth homomorphism from $H$ into the Lie group $G$, then $H$ acts on $G$ by left translations via the homomorphism $\phi$, and the associated bundle $\xi_{\phi}[G]$ is a principal $G$-bundle. 
 In the case when the base space of $\xi$ is $T^2$, the element of $\pi_1(G)$ corresponding to $\xi_{\phi}[G]$ is the image of the element of $\pi_1(H)$ corresponding to $\xi$ under the induced homomorphism $\phi_*\colon \pi_1(H)\to \pi_1(G)$.
 
 As a special case, principal $S^1$-bundles over the torus $T^2$ correspond to elements of $\pi_1(S^1)\cong \BZ$. The isomorphism with $\BZ$ can be given explicitly by fixing an orientation of the torus and assigning to each circle bundle $\xi$ the integral of its first Chern class $c_1(\xi)$ over the torus. We shall denote by $\xi^n$ the principal $S^1$-bundle over $T^2$ with Chern number $n$.

If $G$ is a compact connected Lie group with finite center, then $\pi_1(G)$ is finite, so there are only a finite number of isomorphism classes of principal $G$-bundles over $T^2$. In particular, if $G$ is simply connected, e.g., $G=\mathrm{SU}(n)$, then all the $G$ bundles over $T^2$ are trivial. Since every element of $\pi_1(G)$ can be represented by a power of an \emph{injective} group homomorphism $\phi\colon S^1\to G$, any principal $G$-bundle over $T^2$ is isomorphic to the bundle $\xi^1_{\phi^k}[G]\cong \xi^k_{\phi}f[G]$ for a suitably chosen injective homomorphism $\phi$ and integer $k$. If $\phi$ represents an element of order $m$ in $\pi_1(G)$, then $\xi^k_{\phi}[G]$ and $\xi^l_{\phi}[G]$ are isomorphic principal $G$ bundles if and only if $m$ divides $k-l$.

\subsection{Weak automorphisms of bundles over the torus}

Recall that by a \emph{weak isomorphism} between the smooth bundles $\xi=(E \xrightarrow{\pi} B)$ and $\xi'=(E' \xrightarrow{\pi'}B')$ with the same fiber type we mean a pair of diffeomorphisms $\Phi\colon E\to E'$ and $\bar \Phi\colon B\to B'$ such that $\bar \Phi\circ \pi=\pi'\circ \Phi$ and $\Phi$ is a structure preserving isomorphism between the fibers. 

A weak isomorphism is uniquely defined by the map $\Phi$. Consequently, the group of weak automorphisms of a bundle can be embedded into the diffeomorphism group of the total space as a subgroup. 

\begin{lem}\label{lem:G_k_action} For any integer $n\ge 2$, the Heisenberg type group $G_n=(\mathbb Z_n\times \mathbb Z_n)\ltimes \mathbb Z_n$ with multiplication rule \eqref{eq:Heisenberg} acts effectively by real analytic weak automorphisms  on a real analytic model of the principal $S^1$-bundle $\xi^n$. 
\end{lem}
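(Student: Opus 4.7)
My plan is to realise $\xi^n$ as a Heisenberg nilmanifold and construct the $G_n$-action by right translations from a refined Heisenberg lattice. Let $H=\BR^3$ carry the Heisenberg product $(a, b, c)(a', b', c') = (a + a', b + b', c + c' + ab')$, and consider the subgroups
\[
\Lambda = \{(a, b, c/n) : a, b, c \in \BZ\},\qquad
\tilde\Lambda = \{(a/n, b/n, c/n^2) : a, b, c \in \BZ\},
\]
with $\Lambda \le \tilde\Lambda$. The projection $(a, b, c) \mapsto (a, b)$ descends to a real analytic principal $S^1$-bundle $\pi\colon E:=H/\Lambda \to T^2$ whose fibres are orbits of the left action of the centre $Z(H) = \{(0, 0, z)\}$. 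The fundamental group $\pi_1(E) = \Lambda$ has presentation $\langle e_1, e_2, e_3 \mid [e_1, e_2] = e_3^n,\; e_3 \text{ central}\rangle$ with $e_1=(1,0,0)$, $e_2=(0,1,0)$, $e_3=(0, 0, 1/n)$; its abelianisation $H_1(E;\BZ) = \BZ^2 \oplus \BZ_n$ has torsion of order $n$, which pins down the Euler number of $\pi$ as $\pm n$, so $E \isom \xi^n$ by \fref{lem:bundle_classification} (after possibly reversing the fibre orientation).

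The key identification is $\tilde\Lambda/\Lambda \isom G_n$. The bijection $\BZ^3 \to \tilde\Lambda$, $(a, b, c) \mapsto (a/n, b/n, c/n^2)$, is a group isomorphism for the Heisenberg product $(a,b,c)(a',b',c') = (a+a', b+b', c+c'+ab')$, and under this identification $\Lambda$ corresponds to $n\BZ^3$. A direct commutator computation using $xyx^{-1} = (y_1, y_2, y_3 + x_1 y_2 - x_2 y_1)$ in $H$ shows that $n\BZ^3$ is normal in $\BZ^3$ with the Heisenberg product, and the induced product on the quotient $(\BZ_n)^3$ is exactly the multiplication rule \eqref{eq:Heisenberg} of $G_n$.

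Now define the $G_n$-action on $E$ by right translation: $\bar g \cdot [x] := [xg]$ for $g \in \tilde\Lambda$. This is well-defined on $E$ because $\Lambda \normal \tilde\Lambda$ gives $g^{-1}\Lambda g = \Lambda$, and hence $[x\lambda g] = [xg\,(g^{-1}\lambda g)] = [xg]$ for every $\lambda \in \Lambda$; it is real analytic since right translation in $H$ is polynomial; and right translation by $g$ commutes with the left action of $Z(H)$ (the fibre rotations), so each element of $G_n$ is an $S^1$-equivariant bundle morphism covering the translation of $T^2$ by $(g_1, g_2) \pmod{\BZ^2}$, i.e., a weak bundle automorphism. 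For effectiveness, if $\bar g$ fixes $[e]$ then $g \in \Lambda$, so $\bar g = 1$ in $\tilde\Lambda/\Lambda = G_n$.

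The main technical care lies in the choice of refinement: taking the central coordinate of $\tilde\Lambda$ in $(1/n^2)\BZ$ rather than $(1/n)\BZ$ ensures that the Heisenberg commutator $[(1/n, 0, 0), (0, 1/n, 0)] = (0, 0, 1/n^2)$ represents the central generator of $G_n$ of order exactly $n$, matching \eqref{eq:Heisenberg}. Equally important is using right translation rather than left, since $\Lambda$ is not normal in $H$, so left translation by $\Lambda$-elements would fail to act trivially on $E = H/\Lambda$.
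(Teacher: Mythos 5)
Your proof is correct, but it takes a genuinely different route from the paper's. The paper builds a completely explicit coordinate model: it quotients the trivial bundle $\BR^2\times S^1$ by the twisted $\BZ^2$-action $\Phi_n\big((k,l),(x,y,z)\big)=(x+k,\,y+l,\,e^{2\pi i\,kny}z)$ to obtain a real analytic model $E_n$ of $\xi^n$, and then simply writes down the $G_n$-action by the formula $\Psi_n\big(([k]_n,[l]_n,[m]_n),\langle x,y,z\rangle_n\big)=\langle x+k/n,\,y+l/n,\,e^{2\pi i(ky+m/n)}z\rangle_n$, with well-definedness and effectiveness verified by direct computation. You instead realise $\xi^n$ as the Heisenberg nilmanifold $H/\Lambda$ and derive the action structurally: your identification $\tilde\Lambda/\Lambda\isom G_n$ is correct (the rescaling $(a,b,c)\mapsto(a/n,b/n,c/n^2)$ is an isomorphism onto $\tilde\Lambda$, $\Lambda$ corresponds to $n\BZ^3$, the conjugation formula $xyx^{-1}=(y_1,y_2,y_3+x_1y_2-x_2y_1)$ gives normality, and the quotient product is exactly \eqref{eq:Heisenberg}), and right translations descend precisely because of that normality, with $S^1$-equivariance coming for free since right translations commute with the left central flow. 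What your approach buys is conceptual transparency — analyticity, the weak-automorphism property, and effectiveness (a stabiliser computation at the base point) all follow from general lattice principles, and it explains where $G_n$ ``comes from''; what it costs is the extra topological step of identifying $H/\Lambda$ with $\xi^n$, which you handle correctly: the presentation $\langle e_1,e_2,e_3\mid [e_1,e_2]=e_3^n,\ e_3\ \text{central}\rangle$ of $\pi_1(E)$ gives torsion $\BZ_n$ in $H_1$, forcing Euler number $\pm n$, and the sign ambiguity is legitimately absorbed by inverting the principal $S^1$-action. One cosmetic point: your $\bar g\cdot[x]:=[xg]$ reverses composition order, so it is a right action (an anti-homomorphism into $\Diff^{\omega}(E)$); setting $\bar g\cdot[x]:=[xg^{-1}]$, or noting that the image is a subgroup of $\Diff^{\omega}(E)$ isomorphic to $G_n$, yields the effective action claimed — this does not affect the conclusion. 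In fact the two constructions describe diffeomorphic models of the same geometry, so your argument can be read as a structural re-derivation of the paper's explicit formulas.
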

\begin{proof}
We construct a real analytic model of $\xi^n$ for $n\in \mathbb Z$ as follows. Let $\Xi=(\mathbb R^2\times S^1\to \mathbb R^2)$ be the trivial principal $S^1$-bundle over the plane. For a given  $n\in \mathbb Z$, consider the  action of $\mathbb Z^2$ on $\Xi$ by weak automorphisms given by the action $\Phi_n\colon \BZ^2\times (\BR^2\times S^1)\to\BR^2\times S^1 $, 
\[
\Phi_n\big((k,l),(x,y,z)\big)=(x+k,y+l,e^{2\pi i \,kn y}z)
\]
on the total space, where $(k,l)\in \mathbb Z^2$, $(x,y)\in \mathbb R^2$, and  $z\in S^1\subset \BC$.
 As $\Xi$ and the action $\Phi_n$ are real analytic, factoring $\Xi$ with this action, we obtain a real analytic model $E_n\xrightarrow{\pi_n}T^2$ for the principal $S^1$-bundle $\xi^n$. Denote the $\Phi_n(\mathbb Z^2)$-orbit of 
$(x,y,z)\in \mathbb R^2\times S^1$ by $\langle x,y,z\rangle_n\in E_n$. 

Take the ring of modulo $n$ residue classes of integers as a model for $\BZ_n$, and denote by $[k]_n\in \BZ_n$ the residue class of $k\in \BZ$. Then the demanded action of $G_n$ on the bundle $\xi^n$ is induced by the action $\Psi_n\colon G_n\times E_n\to E_n$,
\[                                                               
\Psi_n\big(([k]_n,[l]_n,[m]_n), \langle x,y,z\rangle_n\big) = \langle x+k/n,y+l/n,e^{2\pi i(k y+\frac{m}{n})}z\rangle_n 
\]
on the total space of $\xi^n$.
\end{proof}

\begin{lem}
  If $A<G_n$ is a commutative subgroup, then $|G_n:A|\geq n$.
\end{lem}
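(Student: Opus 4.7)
The plan is to exploit the semidirect product structure of $G_n$: project onto $\BZ_n\times\BZ_n$ with kernel the centre, and translate commutativity of $A$ into an isotropy condition for a symplectic-type form on $\BZ_n\times\BZ_n$. Two preliminary computations based on \eqref{eq:Heisenberg} set the stage. The commutator formula is
\[
  [(a,b,c),(a',b',c')] = (0,0,ab'-a'b),
\]
from which one reads off that the centre $Z=Z(G_n)$ equals $\{(0,0,c):c\in\BZ_n\}$, which is also the kernel of the projection $\pi\colon G_n\to\BZ_n\times\BZ_n$, $(a,b,c)\mapsto(a,b)$. Set $\bar A=\pi(A)$. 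The short exact sequence $1\to A\cap Z\to A\to\bar A\to 1$ gives $|A|=|A\cap Z|\cdot|\bar A|\le n\cdot|\bar A|$, so it suffices to show $|\bar A|\le n$.

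The hypothesis that $A$ is abelian combined with the commutator formula forces $ab'\equiv a'b\pmod n$ for every pair of elements of $\bar A$; equivalently, $\bar A$ is isotropic for the alternating form $\omega((a,b),(a',b'))=ab'-a'b$ on $V=\BZ_n\times\BZ_n$. This form is nondegenerate, since $v\mapsto\omega(v,\cdot)$ defines an isomorphism $V\to V^*$. I would then prove the duality $|B|\cdot|B^\perp|=n^2$ for every subgroup $B\le V$; granted this, $\bar A\subseteq\bar A^\perp$ yields $|\bar A|^{2}\le n^{2}$, whence $|\bar A|\le n$ and $|G_n:A|\ge n^{3}/n^{2}=n$.

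The one mildly delicate point is the duality claim, since $\BZ_n$ is not a field and naive dimension counting is unavailable. I would establish it by lifting $B$ to a subgroup of $\BZ^2$ containing $n\BZ^2$ and invoking Smith normal form to obtain a $\BZ$-basis $e_1,e_2$ of $\BZ^2$ and divisors $d_1,d_2$ of $n$ such that $B$ is generated modulo $n\BZ^2$ by $d_1 e_1$ and $d_2 e_2$. Because $\omega(e_1,e_2)=\pm 1$ in $\BZ_n$, a direct inspection of the conditions $\omega(v,d_i e_i)\equiv 0\pmod n$ for $v=xe_1+ye_2$ shows that $|B^\perp|=d_1 d_2$, while $|B|=n^{2}/(d_1 d_2)$, completing the argument.
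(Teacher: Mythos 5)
Your proof is correct, and it follows essentially the same route as the source the paper cites: the paper itself gives no argument for this lemma, deferring to Section~3 of Zarhin, where the key point is likewise that the commutator induces a nondegenerate alternating pairing on $G_n/Z(G_n)\cong\BZ_n\times\BZ_n$ under which the image of an abelian subgroup is isotropic, forcing $|\pi(A)|\le n$. Your handling of the one genuinely delicate step over the non-field $\BZ_n$ — the duality $|B|\cdot|B^\perp|=n^2$ via Smith normal form — is sound (one could alternatively note that $V\to\Hom(B,\BZ_n)$ is surjective because $\BZ_n$ is self-injective, with $|\Hom(B,\BZ_n)|=|B|$), so your write-up supplies a complete self-contained proof where the paper only supplies a reference.
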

\begin{proof}
  See \cite[Section 3.]{Zarhin}.
\end{proof}

\begin{thm}
Let $G$ be a connected compact Lie group with finite center, $F$ be a compact real analytic manifold with an effective real analytic left $G$-action $\theta$. Then for any principal $G$-bundle $\eta$ over the torus $T^2$, the weak automorphism group of the associated bundle $\eta[{}_{\theta}F]$ contains infinitely many of the groups $G_n$ as a subgroup. In particular, the diffeomorphism groups of the total spaces of these bundles do not have the Jordan property.
\end{thm}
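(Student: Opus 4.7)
The plan is to produce, for infinitely many integers $n$, an effective real analytic action of the Heisenberg type group $G_n$ on $\eta[{}_\theta F]$ by weak bundle automorphisms. Combined with the preceding index bound on commutative subgroups of $G_n$, this immediately shows that $\Diff(\eta[{}_\theta F])$ contains finite subgroups with no abelian subgroup of bounded index, so the Jordan property fails.

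The first step is to realize $\eta$ as induced from a principal $S^1$-bundle. Since $G$ is compact connected with finite center, $\pi_1(G)$ is finite, so as recalled before \fref{lem:G_k_action} we may fix an injective real analytic homomorphism $\phi\colon S^1\to G$ and an integer $k$ with $\eta\isom\xi^k_\phi[G]$, and more generally $\xi^n_\phi[G]\isom\eta$ for every $n$ in the arithmetic progression $k+m_0\BZ$, where $m_0$ is the order of $\phi_*[S^1]$ in $\pi_1(G)$. Infinitely many such $n\ge 2$ exist, and for each of them the transitivity of the Borel construction gives
\[
\eta[{}_\theta F]\isom\xi^n_\phi[G][{}_\theta F]\isom\xi^n[{}_{\theta\circ\phi}F],
\]
where $S^1$ acts on $F$ through $\theta\circ\phi$.

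Next I would transport the $G_n$-action of \fref{lem:G_k_action} along this isomorphism. Any weak automorphism of the principal $S^1$-bundle $\xi^n$ descends to a weak $F$-bundle automorphism of $\xi^n\times_{S^1}F$ by acting on the first factor, and the assignment is a real analytic group homomorphism, so we obtain a real analytic $G_n$-action on $\eta[{}_\theta F]$ by weak automorphisms. Effectiveness is then checked directly from the explicit formula for $\Psi_n$ in the proof of \fref{lem:G_k_action}: an element $([a]_n,[b]_n,[c]_n)\in G_n$ acts on the base $T^2$ by the translation $(a/n,b/n)$, so it can act trivially on the base only when $[a]_n=[b]_n=0$; in that case its induced action on each fiber of $\xi^n$ is rotation by $e^{2\pi i c/n}$, and the resulting action on the fiber $F$ of $\eta[{}_\theta F]$ is $\theta\bigl(\phi(e^{2\pi i c/n})\bigr)$. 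Since $\phi$ is injective and $\theta$ is effective, this is trivial iff $[c]_n=0$, so the $G_n$-action on $\eta[{}_\theta F]$ is effective.

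Finally, letting $n$ run through the progression $k+m_0\BZ$, the preceding lemma guarantees that every commutative subgroup of $G_n$ has index at least $n$, giving finite subgroups of $\Diff(\eta[{}_\theta F])$ of arbitrarily large minimal abelian index; the Jordan property consequently fails. The main conceptual step is already contained in \fref{lem:G_k_action}; what remains is the bundle-theoretic bookkeeping above (noting that the Borel quotient by the proper free $S^1$-action preserves real analyticity) and the straightforward effectiveness verification.
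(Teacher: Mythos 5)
Your proposal is correct and follows essentially the same route as the paper: both fix an injective $\phi\colon S^1\to G$ with $\eta\isom\xi^k_\phi[G]$, use the classification of principal $G$-bundles over $T^2$ by $\pi_1(G)$ to get $\eta\isom\xi^l_\phi[G]$ for all $l\equiv k \pmod m$, transport the $G_l$-action of \fref{lem:G_k_action} to the associated bundle, and deduce effectiveness from the injectivity of $\phi$ and the effectiveness of $\theta$. Your rewriting of $\eta[{}_\theta F]$ as $\xi^n[{}_{\theta\circ\phi}F]$ via transitivity of the associated-bundle construction is only a cosmetic variant of the paper's appeal to naturality, and your explicit effectiveness verification merely spells out a step the paper leaves to the reader.
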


\begin{proof} As it was observed at the end of subsection \ref{subsec:bundles_over_torus}, we can find an injective homomorphism $\phi\colon S^1\to G$ such that $\eta\cong\xi^k_{\phi}[G]$ for some $k\in \BZ$, and if $\eta$ corresponds to an element of order $m$ in $\pi_1(G)$ by the statement of \fref{lem:bundle_classification}, then  $\eta\cong\xi^l_{\phi}[G]$ whenever $k\equiv l$ mod $m$. By \fref{lem:G_k_action} and the naturality of the associated bundle construction, if the integer $l\ge 2$ is congruent to $k$ modulo $m$, then $G_l$ acts effectively on $\xi^l$ by weak automorphisms, and this action induces an action of $G_l$ also on the associated bundles $\eta\cong\xi^l_{\phi}[G]$  and  $\eta_{\theta}[F]\cong(\xi^l_{\phi}[G])_{\theta}[F]$. Since $\phi$ is injective, $\theta$ is effective, the resulting $G_l$-actions on $\eta_{\theta}[F]$ and its total space are effective. Representing these bundles with their real analytic models described above, all these actions will be real analytic. 
\end{proof}

\section{The group theoretic reduction theorem}
\label{sec:jordan-type-theorems}

In this section, we prove our key group-theoretic tool,
\fref{thm:Nilpotent-or-Special-by-cyclic_v0}
and other related results. These results can be used to show that,
under certain conditions, a finite group contains an abelian or
nilpotent normal subgroup of small index.

We will start with an account of the theory of finite groups of
bounded rank. Recall that the \emph{rank} of a finite group $G$ is the
minimal integer $r$ such that every subgroup $H$ of $G$ is
$r$-generated.  

The following lemma is proved in \cite[Corollary~1.8]{p-group-stuff}.
\begin{lem}
  \label{lem:rank-and-elementary-abelian-subgroups}
  If every elementary abelian subgroup of a finite group $G$ has rank at most $d$,
  then $G$ has rank at most $\frac12d^2+2d+1$.
\end{lem}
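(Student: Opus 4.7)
The plan is to prove the lemma in two stages: first reduce to finite $p$-groups, then analyze that case.

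\emph{Stage 1: Reduction to $p$-groups.} For each prime $p$, a Sylow $p$-subgroup $P_p\le G$ inherits the hypothesis, since any elementary abelian subgroup of $P_p$ is elementary abelian in $G$ and so has rank at most $d$. By results proved via the Classification of Finite Simple Groups (work of Kov\'acs, Lucchini, and Guralnick), the rank of any finite group exceeds the maximum rank of its Sylow subgroups by at most a small absolute constant. So up to adjusting the constants, it suffices to bound $\rank(P)$ for a finite $p$-group $P$ whose elementary abelian subgroups all have rank at most $d$.

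\emph{Stage 2: The $p$-group case.} Let $A\normal P$ be a maximal normal abelian subgroup. A standard argument (if $C_P(A)\neq A$, then a central element of $C_P(A)/A$ would produce a strictly larger normal abelian subgroup) gives $C_P(A)=A$, hence $P/A$ embeds into $\Aut(A)$. Since $A$ is an abelian $p$-group, $\rank(A)=\rank(\Omega_1(A))\le d$ by hypothesis. Writing $A\isom\prod_{i=1}^{k}\BZ/p^{e_i}$ with $k\le d$, the group $\Aut(A)$ sits in an extension of a subgroup of $\GL_k(\BF_p)$ by a unipotent $p$-group, and one computes the Sylow $p$-rank of $\Aut(A)$ to be $O(d^2)$. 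Combining with the subadditivity $\rank(P)\le\rank(A)+\rank(P/A)$ in an extension yields a quadratic bound on $\rank(P)$.

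\emph{Main obstacle.} The delicate step is calibrating the additive constants so as to land on exactly $\tfrac12 d^2+2d+1$ rather than a slightly larger polynomial. The quadratic term comes from the Sylow $p$-rank of the unipotent upper triangulars in $\GL_d(\BF_p)$ (via ``rectangular block'' subgroups this rank can be as large as $\lfloor d^2/4\rfloor$, and with shear automorphisms within cyclic factors of $A$ it grows to roughly $\binom{d+1}{2}$); the linear and constant corrections absorb $\rank(A)\le d$, the diagonal contribution from the $\BZ/p^{e_i-1}$ part of $\Aut(\BZ/p^{e_i})$, and the CFSG-based Sylow-to-group slack. A small case distinction (notably between odd $p$ and $p=2$, where generalized quaternion subgroups perturb the analysis) is likely required to produce the precise stated bound.
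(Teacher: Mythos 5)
You should first note that the paper contains no proof of this lemma at all: it is imported verbatim from \cite[Corollary~1.8]{p-group-stuff}, so your attempt has to stand on its own. It does not, and the failure is not in the ``calibration of additive constants'' you flag, but in the leading quadratic coefficient of your Stage 2. Your accounting of the Sylow $p$-rank of $\Aut(A)$ (unitriangulars contributing $\lfloor k^2/4\rfloor$, growing to roughly $\binom{k+1}{2}$ with shears) misses the dominant contribution: for a homocyclic group $A\isom(\BZ/p^e\BZ)^k$ with $e\ge 2$, the congruence kernel $\{I+p^{e-1}M : M\in \mathrm{M}_k(\BF_p)\}$ is a \emph{normal} elementary abelian subgroup of $\Aut(A)$ of rank $k^2$, hence contained in every Sylow $p$-subgroup. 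So the unconditional input to your subadditivity $\rank(P)\le\rank(A)+\rank(P/A)$ is at least $d+d^2$ in the worst case, which exceeds $\tfrac12 d^2+2d+1$ already for $d\ge 3$. No bookkeeping of linear and constant terms, and no odd-$p$ versus $p=2$ case split, can repair a wrong coefficient of $d^2$; the route ``bound $\rank(A)\le d$, then bound the full Sylow $p$-rank of $\Aut(A)$ in terms of $d$ alone'' is structurally incapable of reaching $\tfrac12 d^2$.

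The missing idea is that the hypothesis on $P$ itself, not merely the consequence $\rank(A)\le d$, must constrain which $p$-subgroups of $\Aut(A)$ can occur as the image of $P/A$. To see that the dangerous subgroups really are excluded: in the split model $P=A\rtimes Q$ with $Q$ the congruence kernel above, $Q$ centralizes $\Omega_1(A)=p^{e-1}A$ and consists of elements of order $p$, so $\Omega_1(A)\times Q$ is an elementary abelian subgroup of $P$ of rank $k+k^2$, which the hypothesis forbids unless $k+k^2\le d$. A correct proof must run an argument of this kind inside $P$, where the extension need not split and lifts of automorphisms need not have order $p$, so one is forced to work with subgroups such as $\Omega_1\big(C_P(\Omega_1(A))\big)$ and to analyze the action on a filtration of $A$ rather than quote a generic rank bound for $\Aut(A)$. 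Your Stage 1 is fine as stated (the Guralnick--Lucchini theorem gives $\rank(G)\le 1+\max_p\rank$ of a Sylow $p$-subgroup, via CFSG), but Stage 2 both miscomputes the Sylow rank of $\Aut(A)$ and, once corrected, overshoots the stated bound, while the precise-constant and $p=2$ analyses are explicitly left undone. As it stands, the proposal proves only a weaker bound of the shape $\rank(G)\le d^2+O(d)$, not the lemma.
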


It is an easy observation that a finite abelian subgroup of $\GL(n,\BC)$ has
rank at most $n$. Moreover, as shown in \cite{Kovacs_Robinson}, the rank of any
finite subgroup of $\GL(n,\BC)$ is at most $[3n/2]$.  Groups of bounded
rank also occur naturally in the study of finite groups acting on
compact topological manifolds. Namely, as mentioned in the introduction, by a result of Mann and Su \cite{Mann_Su},
the ranks of elementary abelian groups acting faithfully by homeomorphisms on a given
compact topological manifold $M$ are bounded. This implies that actually there is
a bound on the ranks of finite groups acting on $M$ by homeomorphisms.

Shalev \cite{Shalev} 
has given a description of the structure of finite groups of bounded
rank.
\begin{thm}
  \label{thm:2-1}
  Let $G$ be a finite group of rank $r$.
  Then there exists a chain $1\normal G_2\normal G_1\normal G$ of
  characteristic subgroups of $G$ such that
  \begin{enumerate}[\indent(a)]
  \item $|G/G_1|$ is bounded in terms of $r$;
  \item $|G_1/G_2|\isom S_1\times\dots \times S_k$, where
    $0\le k\le r$, and for $1\le i\le k$, the group
    $S_i=X_{n_i}(p_i^{e_i})$ is a simple group of Lie type, such that
    $n_i$ and $e_i$ are bounded in terms of $r$;
  \item $G_2$ is soluble.
  \end{enumerate}
\end{thm}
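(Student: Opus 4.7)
The plan is to combine CFSG with standard generalized Fitting subgroup machinery. First I would take $G_2$ to be the soluble radical of $G$; this is a characteristic subgroup and satisfies (c) by definition. Writing $\bar G = G/G_2$, the quotient has trivial soluble radical, so its Fitting subgroup is trivial and the generalized Fitting subgroup $F^*(\bar G)$ equals the layer $E(\bar G)$; moreover, any central subgroup of $\bar G$ is soluble, hence $Z(E(\bar G)) = 1$, so $E(\bar G)$ is in fact a direct product $T_1 \times \cdots \times T_\ell$ of non-abelian finite simple groups. The defining property $C_{\bar G}(F^*(\bar G)) \le F^*(\bar G)$ then yields an embedding $\bar G \hookrightarrow \Aut(T_1 \times \cdots \times T_\ell)$. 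Since every non-abelian simple group has even order (Feit--Thompson), the socle contains $(\BZ_2)^\ell$, and so $\ell \le r$ by the rank hypothesis.

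The next step is to analyse each $T_i$ using CFSG. Each $T_i$ embeds in $G$, hence has rank at most $r$. Sporadic groups are finite in number and so automatically have bounded order. In the alternating group $A_n$, commuting double transpositions generate a copy of $(\BZ_2)^{\lfloor n/4 \rfloor}$, forcing $n$ to be bounded in terms of $r$. For a simple group $X_n(p^e)$ of Lie type, the unipotent radical of a Borel subgroup contains an elementary abelian $p$-subgroup of rank proportional to $e$ times the number of positive roots of the underlying root system; applying \fref{lem:rank-and-elementary-abelian-subgroups} if necessary, this forces both the Lie rank $n$ and the field degree $e$ to be bounded in terms of $r$. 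I would then let $G_1/G_2$ be the subproduct of those $T_i$ that are of Lie type; since this subset is invariant under every automorphism of the socle, $G_1/G_2$ is characteristic in $\bar G$, and $G_1$ is characteristic in $G$. Relabelling those factors as $S_1,\dots,S_k$ with $k \le \ell \le r$ gives (b).

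For (a), note that $\bar G / (T_1 \times \cdots \times T_\ell)$ embeds into $\mathrm{Out}(T_1 \times \cdots \times T_\ell) = (\prod_i \mathrm{Out}(T_i)) \rtimes \Sigma$, where $\Sigma$ permutes isomorphic factors. The outer automorphism group of each simple factor has order bounded by a function of its defining parameters (diagonal, field, and graph parts for Lie type; just the finite order for the finitely many sporadic or bounded-$n$ alternating exceptions), and $|\Sigma| \le \ell!$, so $|\bar G : T_1\times\cdots\times T_\ell|$ is bounded in terms of $r$. Writing $R$ for the product of the non-Lie-type socle factors, one has $|G/G_1| = |\bar G : T_1\times\cdots\times T_\ell| \cdot |R|$, and $|R|$ is bounded because each of its factors has bounded order. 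This yields the desired bound on $|G/G_1|$ in terms of $r$.

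The main technical obstacle lies in the careful invocation of CFSG together with the family-by-family verification of two facts for each simple group of Lie type: that the elementary abelian $p$-rank inside the unipotent radical of a Borel subgroup grows with both the Lie rank and the field degree, and that the outer automorphism group has order bounded in those two parameters. Both are standard consequences of Chevalley-type constructions and the classification, but any self-contained treatment has to traverse the classical series $A,B,C,D$, the exceptional types $E_6,E_7,E_8,F_4,G_2$, and their twisted variants; the remainder of the argument is purely formal manipulation with the generalized Fitting subgroup.
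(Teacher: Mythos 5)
The paper offers no internal proof of this statement: it is quoted verbatim as Shalev's structure theorem for groups of bounded rank, with the proof outsourced entirely to the citation \cite{Shalev} (and the surrounding text only remarks that the result depends essentially on CFSG). So any proof you give is necessarily a ``different route,'' and yours is in fact the standard CFSG skeleton behind results of this kind, and it is sound: take $G_2$ to be the soluble radical, note that $\bar G=G/G_2$ has $F(\bar G)=1$, so $F^*(\bar G)=E(\bar G)$ is a centerless direct product $T_1\times\dots\times T_\ell$ of nonabelian simple groups (its center is an abelian characteristic subgroup of $\bar G$, hence trivial), get faithfulness of conjugation from $C_{\bar G}(F^*(\bar G))\le Z(F^*(\bar G))=1$, bound $\ell\le r$ by commuting involutions via Feit--Thompson, bound alternating degrees by disjoint double transpositions and the Lie parameters $n_i,e_i$ by elementary abelian unipotent subgroups, split the socle into its Lie-type part (this is an isomorphism-closed, hence automorphism-invariant, set of components, so $G_1$ is characteristic) and a bounded-order remainder $R$, and absorb $R$ together with the $\mathrm{Out}$-quotient into $G/G_1$. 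Three small corrections, none fatal. First, the $T_i$ do \emph{not} in general embed in $G$ --- they sit inside the quotient $G/G_2$; what you need is that rank is inherited by both subgroups and quotients, which is immediate from the definition. Second, \fref{lem:rank-and-elementary-abelian-subgroups} points the wrong way for your purpose: you only need the trivial direction, that an elementary abelian subgroup of a rank-$r$ group has rank at most $r$. Third, the largest elementary abelian subgroup of the unipotent radical has rank on the order of $e$ times the size of a maximal commuting set of positive roots (roughly half of $|\Phi^+|$ in the classical types), not $e\,|\Phi^+|$ itself; but all the argument requires is that this rank grows without bound in each of $n$ and $e$ separately, which holds in every family, including the twisted ones (for the Suzuki and Ree groups use the center of a Sylow $p$-subgroup, of order $q$). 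What your approach buys is a self-contained proof modulo standard family-by-family facts about simple groups (elementary abelian ranks and $|\mathrm{Out}(X_n(p^e))|=d\cdot e\cdot g$ being bounded in $n$ and $e$); what the paper's citation buys is precisely skipping those verifications, which you correctly identify as the only technical bulk.
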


Note that the above structure theorem relies on the Classification
of Finite Simple Groups (CFSG) in an essential way. It would
be most interesting to prove such a result without CFSG.
This would
be a natural analogue of the CFSG-free description of the structure of
finite linear groups of bounded dimension over arbitrary fields. That
structure theorem was first obtained by Weisfeiler
\cite{weisfeiler1984post}
using CFSG and later Larsen and Pink \cite{Larsen-Pink}
gave an amazing ``elementary'' proof.

We will use \fref{thm:2-1} to reduce the proof of \fref{thm:Nilpotent-or-Special-by-cyclic_v0}, the
main result of this section, to the soluble case. We prove the
following.

\begin{prop}
  \label{prop:2-2}
  Let $G$ be a finite group of rank $r$ and
  assume that all soluble subgroups $S$ of $G$ contain a nilpotent normal
  subgroup of index at most $t$. Then $G$ contains a nilpotent normal
  subgroup of index bounded in terms of $r$ and $t$.
\end{prop}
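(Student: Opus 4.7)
The plan is to apply Shalev's structure theorem (\fref{thm:2-1}) and take the Fitting subgroup of the soluble layer as the desired nilpotent normal subgroup. \fref{thm:2-1} provides a characteristic chain $1\normal G_2\normal G_1\normal G$ with $G_2$ soluble, $|G/G_1|$ bounded in terms of $r$, and $G_1/G_2\isom S_1\times\cdots\times S_k$ a product of $k\le r$ simple groups of Lie type $S_i=X_{n_i}(p_i^{e_i})$ with $n_i,e_i$ bounded in terms of $r$. The Fitting subgroup $F(G_2)$ is characteristic in $G_2$, hence in $G$, and it is nilpotent by definition. Since the Fitting subgroup contains every nilpotent normal subgroup, applying the hypothesis to the soluble group $G_2$ gives $|G_2:F(G_2)|\le t$. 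It therefore suffices to bound $|G_1/G_2|$ in terms of $r$ and $t$.

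Since $n_i$, $e_i$, and $k$ are already bounded in $r$, the only remaining parameter is the characteristic $p_i$ of each $S_i$. I would bound $p_i$ by a pull-back argument. Pick a Borel subgroup $B_i\le S_i$, which is soluble, and let $\tilde B\le G_1$ be the preimage of $B_1\times\cdots\times B_k\le G_1/G_2$. Then $\tilde B$ is an extension of the soluble group $G_2$ by the soluble product $\prod_i B_i$, hence soluble. The hypothesis supplies a nilpotent normal subgroup $N\normal\tilde B$ of index $\le t$; its image $NG_2/G_2$ is a nilpotent normal subgroup of $\prod_i B_i$ of index $\le t$, and intersecting this image with the normal factor $B_i$ yields a nilpotent normal subgroup of $B_i$ of index $\le t$. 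In particular, $|B_i:F(B_i)|\le t$.

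The crux of the argument, and its main obstacle, is to show that $|B_i:F(B_i)|$ grows without bound as $p_i\to\infty$ with $n_i,e_i$ fixed, so that the previous inequality forces $p_i$ to be bounded in $r$ and $t$. Writing $B_i=U_i\semidirect T_i$ with $U_i$ the unipotent radical and $T_i$ a maximal torus, a nilpotent group decomposes as the direct product of its Sylow subgroups, so any nilpotent normal subgroup of $B_i$ splits as a $p_i$-part contained in $O_{p_i}(B_i)=U_i$ and a $p_i'$-part contained in $O_{p_i'}(B_i)$. A normal $p_i'$-subgroup of $B_i$ lies in $T_i$ and is centralised by $U_i$, so $O_{p_i'}(B_i)\le C_{T_i}(U_i)$, which is contained in the centre of the simply connected cover of $S_i$ and so has order bounded in terms of $n_i$. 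Consequently $F(B_i)=U_i\times C_{T_i}(U_i)$ and
\[|B_i:F(B_i)|=|T_i:C_{T_i}(U_i)|,\]
which grows polynomially in $q_i=p_i^{e_i}$ of degree equal to the Lie rank of $S_i$. For bounded $n_i,e_i$ this grows without bound as $p_i\to\infty$, yielding the required bound on $p_i$.

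Putting everything together: each $|S_i|$ is bounded in terms of $r$ and $t$, so $|G_1/G_2|$ is bounded, and then $|G:F(G_2)|\le t\cdot|G/G_1|\cdot|G_1/G_2|$ is bounded in terms of $r$ and $t$, completing the proof. The technical heart of the argument is the Borel-subgroup computation of the preceding paragraph, which relies on the Chevalley-type structure of simple groups of Lie type — the same place where CFSG is implicitly invoked through \fref{thm:2-1}.
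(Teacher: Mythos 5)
Your proposal is correct, and it reaches the conclusion by a genuinely different route from the paper's. Both arguments share the same germ --- apply the soluble-subgroup hypothesis to a Borel subgroup of a group of Lie type and extract a bound on the field size from the Fitting structure of that Borel --- but they diverge in two substantive ways. First, the paper must begin by proving that the hypothesis passes to \emph{sections} of $G$ (the Frattini-subgroup argument opening its proof), because the relevant copies of $\PSL(2,q)$ arise only as sections; you sidestep that lemma entirely by pulling $B_1\times\dots\times B_k$ back to an honest soluble subgroup $\tilde B\le G_1\le G$, so the hypothesis applies directly to a subgroup. Second, the paper carries out the Borel computation only inside $\SL(2,q)$ (an explicit $2\times 2$ matrix argument yielding $q\le 2t+1$) and then propagates the bound to an arbitrary $S_i$ via the Liebeck--Nikolov--Shalev product theorem, treating Suzuki groups as a separate case; you instead compute $F(B_i)$ in the Borel $B_i=U_i\rtimes T_i$ of an arbitrary group of Lie type, which bounds the one parameter ($p_i$) that \fref{thm:2-1} leaves free, uniformly in all types --- Suzuki and Ree groups need no special treatment since their characteristic is fixed and $e_i$ is already bounded. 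The trade-off: the paper needs only elementary matrix computations plus Liebeck--Nikolov--Shalev as a black box, whereas your route leans on standard but nontrivial structure theory of Borel subgroups (the decomposition $B=U\rtimes T$ with $T$ an abelian $p'$-group, and centralizer and torus-order facts). Two of your justifications there should be tightened. A normal $p_i'$-subgroup $K$ of $B_i$ need not literally lie in $T_i$; what is true is that $[K,U_i]\le K\cap U_i=1$, so $K\le C_{B_i}(U_i)$, and the $p_i'$-Hall subgroup of $C_{B_i}(U_i)$ is, up to conjugacy by Hall's theorem in soluble groups, contained in $C_{T_i}(U_i)$ --- which suffices for the order estimate. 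And $C_{T_i}(U_i)$ is not ``contained in the centre of the simply connected cover'' (that centre does not live in $S_i$); the correct statement is that an element of $T_i$ centralising every positive root subgroup is killed by all roots, hence is trivial in the simple group (and of order bounded in terms of $n_i$ in other versions), so in fact $F(B_i)=U_i$ and $|B_i:F(B_i)|=|T_i|\ge (q_i-1)/c(n_i)$, which is unbounded as $p_i\to\infty$ with $n_i,e_i$ fixed. With these repairs your argument is sound, and your closing assembly via $F(G_2)$ --- characteristic in $G_2$, hence normal in $G$, of index at most $t\cdot|G/G_1|\cdot|G_1/G_2|$ --- makes explicit the step the paper compresses into ``Our statement follows.''
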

\begin{proof}
  If $Q$ is a section (quotient of a subgroup) of $G$, then it satisfies
  the same condition on soluble subgroups as $G$. This follows easily
  using the well-known fact that if $Q$ is isomorphic to $H/N$, where $H$ is
  a minimal subgroup of $G$ having $Q$ as a quotient, then $N$ must be
  nilpotent. (In fact $N$ is contained in the Frattini subgroup $\Phi(H)$,
  which is known to be nilpotent. To see this, we use another
  well-known fact, namely that $\Phi(H)$ is the intersection of the
  maximal subgroups of $H$. If such a maximal subgroup $M$ does not
  contain $N$, then $MN=H$ and hence $H/N$ is isomorphic to
  $M\big/(H\cap N)$, a contradiction.)

  We claim that if the section $Q$ is isomorphic
  to $\PSL(2,q)$, then we have $q\le 2t+1$.
  Indeed, assume the contrary.
  $Q$ is the quotient of $\SL(2,q)$
  by its center $Z=\{I,-I\}$,  where $I$ denotes the unit matrix.
  (In characteristic two, $I=-I$ and $Z=\{I\}$.)
  Hence $\SL(2,q)$ also satisfies the same condition on soluble
  subgroups.
  Let $B$ denote the subgroup of upper triangular matrices.
  $B$ is the semidirect product of the subgroups
  $$
  P = \left\{
    \begin{pmatrix}
      1&x\\
      0&1
    \end{pmatrix}
    ,\ x\in\BF_q\right\}
  \quad\text{and}\quad
  D = \left\{
    \begin{pmatrix}
      y&0\\
      0&y^{-1}
    \end{pmatrix}
    ,\ y\in\BF_q,\ y\ne0\right\},
  $$
  where $\BF_q$ denotes the base field.
  $P$ is normal, abelian, and
  the conjugate of an element
  $
  X=
  \begin{pmatrix}
    1&x\\
    0&1
  \end{pmatrix}
  $
  with
  $
  Y=
  \begin{pmatrix}
    y&0\\
    0&y^{-1}
  \end{pmatrix}
  $
  is
  $
  \begin{pmatrix}
    1&xy^2\\
    0&1
  \end{pmatrix}
  $.
  In particular, if $x\ne0$ then $X$ does not commute with $Y$ unless $y=\pm1$, 
  so the centralizer $\CC_B(X)$ is $PZ$,
  and therefore $\big|\CC_B(X)\big|\le 2q$.
  On the other hand $B$ is soluble,
  hence it has a nilpotent normal subgroup $N$ of index at most
  $t<\frac{q-1}2$. But $|B|=q(q-1)$, so $|N|>q-1=|D|$,
  hence $N$ intersects $P$ nontrivially.
  Now $P$ is an abelian Sylow subgroup of $B$, hence
  $P\cap N$ is an abelian Sylow subgroup of
  the nilpotent normal subgroup $N$,
  hence $N$ centralizes $N\cap P$.
  By the above estimate $|N|\le 2q$,
  hence $|B|\le|N|t<2q\frac{q-1}2=|B|$, a contradiction.
  
  Now let $S_i$ be one
  of the simple groups in the statement of \fref{thm:2-1} and assume
  that $S_i$ is not isomorphic to a Suzuki group. By a result of
  Liebeck, Nikolov and Shalev \cite{Liebeck_Nikolov_Shalev}, apart from the Suzuki
  groups, every finite simple group of Lie type
  over a field of $q$
  elements can be written as a product of $cl^2$ 
  subgroups isomorphic to
  $SL(2,q)$ or $PSL(2,q)$, where $l$ is the Lie rank of the group,
  and $c$ is some absolute constant.  By the
  above claim, for any such subgroup, we have $q\le 2t+1$,
  and $l \le r$.
  Hence we have
  $\big|SL(2,q)\big|\le (2t+1)^3$.  It follows that
  $|S_i|\le (2t+1)^{3cr^2}$.
  This implies that the product of the orders of the simple groups
  $S_i$ which are not Suzuki groups is at most $(2t+1)^{3cr^3}$.
  If $S_i$ is
  a Suzuki group, then it has bounded order (since the size of the
  field of definition is bounded).  Our statement follows.
\end{proof}

Next we will describe the structure of finite soluble groups of
bounded rank. For this we need the following standard description of
soluble groups with trivial Frattini subgroup
(see \cite[III-4.2 and III-4.5]{Huppert_I}).

\begin{lem}
  \label{lem:2-3}
  Let $G$ be a finite soluble group and
  suppose that $\Phi(G)=1$.
  Then the Fitting subgroup $F=F(G)$ of $G$ is a direct product of minimal
  elementary abelian normal subgroups of $G$, say
  $F=E_1\times\dots\times E_r$. The
  group $G$ acts upon $F$ by conjugation;
  the kernel of this action is $F$.
  This action defines an embedding of $G/F$ into
  $\Aut(E_1)\times\dots\times\Aut(E_r)$
  in such a way that $G/F$ induces an irreducible
  linear group $G_i=G\big/\CC_G(E_i)$ on each of the vector spaces $E_i$.
\end{lem}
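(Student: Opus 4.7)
The plan is to establish the four assertions in order, each following from standard solvable group theory. The heart is a Frattini non-generator argument for the elementary-abelianness of $F$, the self-centralizing property of the Fitting subgroup for the kernel of conjugation, and complete reducibility via $\Phi(G)=1$ for the decomposition into minimal normal subgroups.

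First, I would show that $\Phi(F)\le\Phi(G)$: for any maximal subgroup $M\le G$, either $F\le M$ (so $\Phi(F)\le M$), or $MF=G$. In the latter case, $\Phi(F)$ is characteristic in $F$ and therefore normal in $G$; if we had $M\Phi(F)=G$ as well, then from $F=(F\cap M)\Phi(F)$ and the non-generator property of $\Phi(F)$ we would conclude $F=F\cap M\le M$, contradicting $MF=G$. Combined with $\Phi(G)=1$, this gives $\Phi(F)=1$. Since $F$ is nilpotent, it is the direct product of its Sylow subgroups, each of which is a $p$-group with trivial Frattini subgroup and hence elementary abelian. Write the Sylow decomposition as $F=F_{p_1}\times\cdots\times F_{p_k}$.

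Next, I would show $\CC_G(F)=F$. Since $F$ is now abelian, $F\le\CC_G(F)$. If the inclusion were strict, the normal subgroup $\CC_G(F)/F$ of $G/F$ would contain a minimal normal subgroup $N/F$, which is elementary abelian. The preimage $N$ would then be an extension of the abelian group $F$ by the abelian group $N/F$ with $F\le Z(N)$, making $N$ nilpotent of class at most $2$. As a nilpotent normal subgroup of $G$ it satisfies $N\le F(G)=F$, a contradiction. Hence $\CC_G(F)=F$ is the kernel of the conjugation action, and $G/F$ embeds into $\Aut(F)=\prod_i\Aut(F_{p_i})$.

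For the decomposition, the main task is to prove that each $F_p$, viewed as an $\BF_p[G]$-module, is completely reducible. Take a minimal normal subgroup $N\le F_p$ of $G$; the hypothesis $\Phi(G)=1$ provides a maximal subgroup $M$ of $G$ with $N\not\le M$, so $MN=G$. The intersection $M\cap N$ is normalized by $M$ and by the abelian $N$, hence by $MN=G$; minimality of $N$ forces $M\cap N=1$. By the same token $K:=M\cap F_p$ is normalized by $M$ and by the abelian $F_p$, hence by $MF_p\supseteq MN=G$, so $K\normal G$. A size count using $|M|=|G|/|N|$ and $K\cap N\le M\cap N=1$ then gives $F_p=K\oplus N$ as $G$-modules. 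Iterating on $K$ yields a decomposition of $F_p$ into minimal normal subgroups of $G$, and combining across primes gives $F=E_1\times\cdots\times E_r$. Each $E_i$, being a minimal normal subgroup, admits no proper $G$-invariant subgroup, so the induced action of $G_i=G/\CC_G(E_i)$ on $E_i$ is an irreducible linear group. The delicate step is precisely this complete reducibility: since $|G/F|$ and $|F_p|$ need not be coprime, Maschke's theorem does not apply directly, and one must exploit $\Phi(G)=1$ via the non-generator argument above.
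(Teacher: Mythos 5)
Your proof is correct; the paper offers no argument of its own here, citing Huppert III-4.2 and III-4.5, and your three steps --- the non-generator argument giving $\Phi(F)\le\Phi(G)=1$ (Gasch\"utz), the self-centralizing property $\CC_G(F)=F$ via a minimal normal subgroup of $G/F$, and complete reducibility obtained from maximal-subgroup complements in place of Maschke --- constitute exactly the standard proof behind that citation. The one point you leave implicit, namely that $G/F$ embeds into $\Aut(E_1)\times\dots\times\Aut(E_r)$ and not merely into $\Aut(F)$, is immediate from what you prove, since each $E_i$ is $G$-invariant and $\bigcap_i\CC_G(E_i)=\CC_G(F)=F$.
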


We will use some results concerning the structure of primitive linear groups, which are essentially due to Suprunenko \cite{Suprunenko_Matrix_groups}, see \cite[Lemma 2.2]{Vdovin} for a concise description.
Recall that a subgroup of $G\le \GL(n,q)$ is an \emph{imprimitive linear} group if
the space $V=V(k,q)$ decomposes as a direct sum
$V_1\times\dots\times V_k$ of subspaces
$(k\ge2)$ and $G$ permutes the $V_i$'s.
It is a \emph{primitive linear} group
if no such $G$-invariant decomposition exists.

\begin{lem}[\cite{Suprunenko_Matrix_groups}, \cite{Vdovin}]
  \label{lem:2-4}
  Let $G < \GL(n,q)$
  be a maximal soluble primitive linear group.
  Then the following statements hold.
  \begin{enumerate}[\indent(a)]
  \item 
    There exists a unique maximal normal abelian subgroup $A$ of $G$.
  \item
    $A$ is isomorphic to the multiplicative group of non-zero elements of an extension
    $\BF_{q^k}$ of $\BF_{q}$, where $k$ divides $n$.
    In particular, $A$ is a cyclic
    group of order $q^k -1$.
  \item
    Setting $C=\CC_G(A)$, the quotient group $G/C$
    is isomorphic to a subgroup of the Galois group $\Gal(\BF_{q^k}\!\!:\!\BF_q)$.
    In particular, $\big|G/C\big|\le k$.
  \item
    Denoting by $B$ a maximal subgroup of $C$
    such that $B/A$ is a maximal normal abelian subgroup of $G/A$,
    the index
    $\big|B/A\big|$ equals $(n/k)^2$.
  \item
    $B/A =F(C/A)$. In particular, $C/B$ embeds into
    $\Aut(B/A)$.
  \end{enumerate}
\end{lem}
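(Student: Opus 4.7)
The strategy is to use Clifford's theorem together with the primitivity hypothesis to show that any normal abelian subgroup embeds into the multiplicative group of a finite field extension $\BF_{q^k}$, and then to iterate the same structural analysis on $C/A$ viewed as a linear group over $\BF_{q^k}$. For parts (a) and (b) I would start with any normal abelian $N\normal G$: Clifford's theorem decomposes $V=\BF_q^n$ into $N$-homogeneous components, which $G$ permutes transitively, and primitivity forces a single component, so $V$ is $N$-homogeneous. Schur's lemma then identifies $\mathrm{End}_N(V)\isom M_m(\BF_{q^k})$ with $mk=n$, and $N$ embeds into the centre $\BF_{q^k}^\times$ of this algebra. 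Since $G$ normalises $N$, it also normalises $\BF_{q^k}^\times=Z\bigl(\CC_{\GL(n,q)}(N)\bigr)$, hence $G\cdot\BF_{q^k}^\times$ is a soluble primitive subgroup of $\GL(n,q)$ containing $G$; by the maximality of $G$ we obtain $\BF_{q^k}^\times\subset G$. The maximality of $A$ then forces $A=\BF_{q^k}^\times$ for the largest such $k$, proving (b). For uniqueness in (a), any two maximal normal abelian subgroups must commute element-wise (each one centralises the field of scalars under which $V$ is homogeneous for the other, by a second application of the Clifford argument), so their product is again normal abelian and by maximality coincides with both.

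For (c), any element of $C=\CC_G(A)$ commutes with the $\BF_{q^k}$-scalar action of $A$ and therefore acts $\BF_{q^k}$-linearly, giving $C\le\GL(m,q^k)$. Since $G$ normalises $A\isom\BF_{q^k}^\times$ and preserves the underlying $\BF_q$-structure of $V$, the conjugation action of $G$ on $A$ factors through the group of $\BF_q$-linear field automorphisms of $\BF_{q^k}$, namely $\Gal(\BF_{q^k}/\BF_q)$, yielding $G/C\hookrightarrow\Gal(\BF_{q^k}/\BF_q)$, of order at most $k$.

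For (d) and (e) the plan is to repeat the structural analysis for the soluble primitive linear group $C/A$ acting on $V$ regarded as an $\BF_{q^k}$-vector space of dimension $m$. The Fitting subgroup $F(C/A)$ splits as a direct product of characteristic $p$-components $F_p$, one for each prime $p$ dividing $m$; each $F_p$ lifts in $C$ to a normal extraspecial-like $p$-subgroup whose centre is absorbed into $A$, and carries a non-degenerate symplectic $\BF_p$-form of some dimension $2r_p$. The classical Suprunenko description of primitive soluble linear groups then gives $\prod_p p^{r_p}=m$, so $|F(C/A)|=\prod_p p^{2r_p}=m^2$. The image of $B$ in $C/A$ is by construction a maximal normal abelian subgroup of $G/A$ lying inside $C/A$; since every normal abelian subgroup of a soluble group lies in its Fitting subgroup, this image equals $F(C/A)$, proving $|B/A|=(n/k)^2$ and so (d). Part (e) then follows from the standard fact that in any finite soluble group $H$ the Fitting subgroup $F(H)$ is self-centralising, so $H/F(H)$ embeds faithfully into $\Aut(F(H))$; applied to $H=C/A$ this yields $C/B\hookrightarrow\Aut(B/A)$. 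The main obstacle in this plan is the order count $|B/A|=(n/k)^2$: identifying the symplectic dimensions $2r_p$ and checking $\prod_p p^{r_p}=m$ uses the primitivity of $C/A$ on $V$ over $\BF_{q^k}$ essentially, and constitutes the real technical content of the Suprunenko-Vdovin theorem.
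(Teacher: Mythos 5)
First, a point of reference: the paper offers no proof of this lemma at all --- it is imported wholesale from Suprunenko's book and Vdovin's Lemma~2.2 --- so your proposal can only be measured against the classical argument in those sources, and in outline it reproduces that argument faithfully: Clifford's theorem plus primitivity make $V$ homogeneous over any normal abelian subgroup, Schur's lemma produces the field $E=\BF_q[A]\isom\BF_{q^k}$ with $k\mid n$, maximality of $G$ absorbs $E^\times$ into $G$ (giving (b)), the Galois action on $E$ gives (c), and the symplectic-type analysis of $F(C/A)$ underlies (d) and (e). You are also candid that the count $\big|B/A\big|=(n/k)^2$ is the real content of the Suprunenko--Vdovin theorem and you defer it; since the paper itself defers to the same sources, that is fair.

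Two genuine gaps remain. The uniqueness claim in (a) is not proved by your parenthetical: an element normalizing $A_1$ normalizes $E_1=\BF_q[A_1]$ and acts on it through $\Gal(E_1\!:\!\BF_q)$, but nothing in a ``second application of the Clifford argument'' forces a second maximal normal abelian subgroup $A_2$ to \emph{centralize} $E_1$; in a general soluble group two maximal normal abelian subgroups need not commute at all (their product is merely nilpotent of class $\le2$, since $[A_1,A_2]\le A_1\cap A_2$, and already $D_8$ has two distinct maximal normal abelian subgroups), so uniqueness genuinely uses primitivity and maximality. A repair along your lines: (i) $A_1\cdot\CC_{A_2}(A_1)$ is normal abelian (it is the product of $A_1$ with $A_2\cap\CC_G(A_1)$, both normal in $G$), so maximality of $A_1$ gives $\CC_{A_2}(A_1)\le A_1$; (ii) if some $a\in A_2$ induced $1\neq\sigma$ on $E_1$, then each commutator $[x,a]=x^{-1}\sigma^{-1}(x)$, for $x\in E_1^\times=A_1$, lies in $A_1\cap A_2$ and is therefore centralized by $a$, which forces $x^2=\sigma(x)\,\sigma^{-1}(x)$ for all $x\in E_1^\times$, i.e. $q^j+q^{k-j}\equiv2\pmod{q^k-1}$ when $\sigma(x)=x^{q^j}$ --- impossible for $0<j<k$; hence $A_2\le\CC_G(A_1)$, so $A_2=\CC_{A_2}(A_1)\le A_1$ and $A_2=A_1$. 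The second gap sits in (d)--(e): ``normal abelian subgroups lie in the Fitting subgroup'' only yields $B/A\le F(C/A)$; equality requires $F(C/A)$ to be \emph{abelian}, which is again part of the deferred structure theory and not a formality. Relatedly, your general claim that $H/F(H)$ embeds in $\Aut(F(H))$ for soluble $H$ is false as stated --- only $H/\CC_H(F(H))$ embeds, and $\CC_H(F(H))=Z(F(H))$ in general --- it is valid here precisely because $B/A=F(C/A)$ is abelian, so that $\CC_{C/A}(B/A)=B/A$; this should be said explicitly.
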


\begin{cor}
  \label{cor:2.5} 
  If $G < \GL(n,q)$ is a soluble primitive linear group,
  then it has a cyclic
  normal subgroup $A$ of order at most $q^n$
  such that $q$ and $|A|$ are relatively prime
  and the index of $A$ in $G$ is bounded in terms of $n$.
\end{cor}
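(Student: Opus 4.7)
The plan is to deduce the corollary from Lemma~\ref{lem:2-4} by passing to a maximal soluble primitive linear overgroup of $G$ inside $\GL(n,q)$ and then pulling the cyclic normal subgroup back to $G$ by intersection.

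First I would observe that, since $\GL(n,q)$ is finite, the set of soluble primitive linear subgroups of $\GL(n,q)$ that contain $G$ has a maximal element $\tilde G$. Lemma~\ref{lem:2-4} applies to $\tilde G$ and produces a chain $\tilde A \normal \tilde B \normal \tilde C \normal \tilde G$ with $\tilde A$ cyclic of order $q^k-1$ for some $k$ dividing $n$, together with the index bounds $|\tilde G/\tilde C|\le k$, $|\tilde B/\tilde A|=(n/k)^2$, and $\tilde C/\tilde B\hookrightarrow \Aut(\tilde B/\tilde A)$.

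Next I would set $A := G\cap\tilde A$. Since $\tilde A$ is normal in $\tilde G$, conjugation by any $g\in G\subseteq\tilde G$ preserves $\tilde A$, hence preserves $G\cap\tilde A$; so $A\normal G$. As a subgroup of the cyclic group $\tilde A$, $A$ is cyclic, its order divides $q^k-1$, so $|A|\le q^n-1<q^n$ and $\gcd(|A|,q)=1$. For the index, the standard isomorphism $G/(G\cap\tilde A)\isom G\tilde A/\tilde A \le \tilde G/\tilde A$ yields
\[
|G:A|\;\le\;|\tilde G:\tilde A|\;=\;|\tilde G:\tilde C|\cdot|\tilde C:\tilde B|\cdot|\tilde B:\tilde A|\;\le\;n\cdot\bigl|\Aut(\tilde B/\tilde A)\bigr|\cdot n^{2}.
\]
Since $|\tilde B/\tilde A|\le n^{2}$ is bounded in terms of $n$, so is $|\Aut(\tilde B/\tilde A)|$ (trivially by $(n^2)!$), and hence $|G:A|$ is bounded in terms of $n$, as required.

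The argument is essentially bookkeeping once Lemma~\ref{lem:2-4} is invoked, so I do not expect a serious obstacle. The only subtle point is the passage from $G$ to a maximal soluble primitive overgroup $\tilde G$: one needs to confirm that such a maximal element exists (immediate from the finiteness of $\GL(n,q)$) and that intersecting with $\tilde A$ both preserves normality in $G$ and keeps control of the index via the embedding of $G/A$ into $\tilde G/\tilde A$. No more delicate input is needed, and in particular the coprimality of $|A|$ with $q$ follows for free from the arithmetic of $q^k-1$.
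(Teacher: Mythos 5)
Your proposal is correct and follows essentially the same route as the paper: the paper's proof opens with ``we may clearly assume that $G$ is actually a maximal primitive soluble group'' and then reads off the index bound $k(n/k)^2\bigl|\Aut(B/A)\bigr|$ from Lemma~\ref{lem:2-4}, which is exactly your argument with the reduction step (existence of a maximal soluble primitive overgroup $\tilde G$, intersecting $G$ with $\tilde A$, and the embedding $G/(G\cap\tilde A)\hookrightarrow\tilde G/\tilde A$) spelled out explicitly rather than left implicit.
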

\begin{proof}
  We may clearly assume that $G$ is actually a maximal primitive
  soluble group. The index of the cyclic subgroup $A$ is at most
  $k(n/k)^2\big|\Aut(B/A)\big|$.
  But the order of $\Aut(B/A)$ is bounded in terms of $n$
  since $\big|B/A\big|$ is bounded in terms of $n$.
  Our statement follows.
\end{proof}

This, in turn, implies the following.

\begin{prop}
  \label{prop:2-6}
  If $G < \GL(n,q)$ is a
  soluble completely reducible linear group,
  then it has an abelian normal subgroup $A$ of order at most $q^n$ such
  that $q$ and $|A|$ are relatively prime
  and the index of $A$ in $G$ is bounded in terms of $n$.
\end{prop}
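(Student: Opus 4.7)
The plan is to reduce to the primitive case and then invoke \fref{cor:2.5} blockwise. First, I would use complete reducibility to write $V = \BF_q^n$ as a direct sum $V_1 \oplus \cdots \oplus V_s$ of irreducible $G$-submodules. For each $i$, I would fix a decomposition $V_i = W_{i,1} \oplus \cdots \oplus W_{i,k_i}$ into the blocks of an imprimitivity system for the image of $G$ in $\GL(V_i)$ (possibly trivial when $k_i=1$), refined so that each $W_{i,j}$ is a primitive module for its stabilizer $H_{i,j} \le G$. Such a refinement is obtained by repeatedly refining any proper imprimitivity system inside a block whose stabilizer still acts imprimitively, a process that terminates since $\dim V_i$ is finite.

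Next, let $N \normal G$ be the kernel of the $G$-action on the set of all blocks $\{W_{i,j}\}$; it is a normal subgroup of index at most $\prod_i k_i! \le n!$, bounded in terms of $n$. For each block, the image $H_{i,j}|_{W_{i,j}} \le \GL(W_{i,j})$ is a soluble primitive linear group, so \fref{cor:2.5} supplies a cyclic normal subgroup $A_{i,j} \normal H_{i,j}|_{W_{i,j}}$ of order coprime to $q$, of order at most $q^{\dim W_{i,j}}$, and of index bounded in terms of $\dim W_{i,j} \le n$. Since $N \le H_{i,j}$ acts on $W_{i,j}$, the preimage $\tilde A_{i,j} \le N$ of $A_{i,j}$ under the restriction homomorphism $N \to \GL(W_{i,j})$ is normal in $N$ with index bounded in terms of $n$.

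Setting $\tilde A := \bigcap_{i,j} \tilde A_{i,j}$, this intersection is normal in $N$ with $[N : \tilde A] \le \prod_{i,j} [N : \tilde A_{i,j}]$ bounded in terms of $n$, since the total number of blocks $\sum_i k_i$ is at most $n$. Because $N$ preserves each $W_{i,j}$, it embeds into $\prod_{i,j} \GL(W_{i,j})$, and the image of $\tilde A$ lies in the abelian product $\prod_{i,j} A_{i,j}$; thus $\tilde A$ is abelian, its order divides $\prod_{i,j} |A_{i,j}| \le q^{\sum \dim W_{i,j}} = q^n$, and its order is coprime to $q$. The normal core of $\tilde A$ in $G$ is then an abelian normal subgroup of $G$ with order at most $q^n$, coprime to $q$, and of index bounded in terms of $n$ (since $[G : \tilde A]$ is bounded), completing the proof.

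The main technical point is the refinement in the opening step: one must argue that every finite-dimensional irreducible soluble linear representation admits a direct sum decomposition into blocks that are permuted by the group and are themselves primitive for their block stabilizers. This is a standard Clifford-theoretic fact, but it is the only nonroutine ingredient; the rest is bookkeeping combining \fref{cor:2.5} across the blocks and pulling back along restriction maps.
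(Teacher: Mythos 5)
Your proof is correct and follows essentially the same route as the paper: a non-refinable imprimitivity system whose block stabilizers act primitively, the kernel $N$ of the block permutation action of index at most $n!$, \fref{cor:2.5} applied blockwise, an abelian pullback inside $N$ via the embedding into the product of the blockwise images, and a normal core at the end. The only (cosmetic) difference is that the paper first reduces to the case of an irreducible $G$ via the subdirect product remark, whereas you handle all irreducible summands simultaneously, which just inlines that reduction.
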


\begin{proof}
  It is enough to show that $G$ has a (not necessarily normal) abelian
  subgroup $B$ with the required properties, since then we can take
  $A$ to be the normal core of $B$ in $G$.  Since $G$ is a subdirect
  product of irreducible linear groups of degrees say $d_1,\dots,d_m$
  with $\sum d_i=n$,
  we may clearly assume that in fact $G$ is irreducible.
  There exists a non-refinable $G$-invariant decomposition
  $V=V_1\times\dots\times V_k$
  into $l$-dimensional subspaces with $n=kl$.
  Now $G$ permutes the subspaces $V_i$ and the kernel
  $N$ of this action has index at most $n!$ in $G$.  If
  $N_i$ is the normalizer and $C_i$ is the centralizer of $V_i$ in
  $G$, then $G_i=N_i/C_i$ acts primitively on $V_i$. Hence
  $G_i$ has a cyclic normal subgroup of order at most
  $q^l$ and coprime to $q$ such that its index in
  $G_i$ is bounded in terms of $n$.  It is clear that
  $N$ is embeddable into $G_1\times\dots\times G_k$ and it follows that
  $N$ has an abelian subgroup $B$ of order at most
  $q^n$, such that $q$ and
  $|B|$ are relatively prime and the index of $B$ in
  $N$ is bounded in terms of
  $n$ (since the same holds for the direct product
  $G_1\times\dots\times G_k$ itself).
  This proves the proposition.
\end{proof}

We call a finite group of the form $E\semidirect C$,
where $E$ is an elementary abelian $p$-group
and $C$ is a cyclic group of prime power order acting
faithfully on $E$ such that $p$ does not divide $|C|$,
a group of \emph{affine cyclic type}.
We denote such an extension group by
$\Aff(E,C)$.

We will use the following
(see \cite[III-3.4 and III-4.5]{Huppert_I}).
\begin{lem}
  \label{lem:2-7}
  Let $G$ be a finite group. Then we have
  \begin{enumerate}[\indent(a)]
  \item
    $\Phi\big(G\big/\Phi(G)\big)=1$ and
  \item
    $F\big(G/\Phi(G)\big)=F(G)\big/\Phi(G)$.
  \end{enumerate}
  
\end{lem}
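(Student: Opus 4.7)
My plan is to handle the two parts separately using classical arguments built on the correspondence theorem, the non-generator property of $\Phi(G)$, and the Frattini argument.

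For part (a), the strategy is the correspondence theorem: every maximal subgroup $M$ of $G$ automatically contains $\Phi(G)$, so the maximal subgroups of $G$ and of $G/\Phi(G)$ correspond bijectively under $M \leftrightarrow M/\Phi(G)$. Intersecting on both sides identifies $\Phi(G/\Phi(G))$ with $\Phi(G)/\Phi(G) = 1$.

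For part (b), the inclusion $F(G)/\Phi(G) \le F(G/\Phi(G))$ is immediate once we know $\Phi(G) \le F(G)$, which follows from the classical fact that $\Phi(G)$ is nilpotent (itself a Frattini argument: any Sylow subgroup of $\Phi(G)$ is characteristic, hence normal in $G$, and the non-generator property then forces it to be normal in $\Phi(G)$). Passing to the quotient, $F(G)/\Phi(G)$ is a nilpotent normal subgroup of $G/\Phi(G)$, and thus contained in $F(G/\Phi(G))$ by maximality.

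The heart of the proof is the reverse inclusion. Set $\overline{G} = G/\Phi(G)$ and let $N \trianglelefteq G$ be the full preimage of $F(\overline{G})$; the goal is to show $N$ is nilpotent. I will show every Sylow $p$-subgroup $P \le N$ is normal in $G$ via a two-step Frattini argument. First, since $N/\Phi(G)$ is nilpotent, its Sylow $p$-subgroup $P\Phi(G)/\Phi(G)$ is characteristic in $N/\Phi(G)$, hence normal in $\overline{G}$; pulling back yields $P\Phi(G) \trianglelefteq G$. Next one verifies that $P$ is itself a Sylow $p$-subgroup of $P\Phi(G)$ (using that the $p$-part of the nilpotent group $\Phi(G)$, being a normal $p$-subgroup of $G$, must lie inside $P$), so the Frattini argument applied to $P \le P\Phi(G) \trianglelefteq G$ gives $G = P\Phi(G)\cdot N_G(P) = \Phi(G)\cdot N_G(P)$. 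The non-generator property of $\Phi(G)$ then collapses this to $G = N_G(P)$, so $P \trianglelefteq G$, and in particular $P \trianglelefteq N$. With every Sylow subgroup normal, $N$ is nilpotent, giving $N \le F(G)$ as desired.

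The main obstacle is the bookkeeping around $P\Phi(G)$: one must confirm that the $p$-part of $\Phi(G)$ lies inside $P$ so that $P$ really is a Sylow subgroup of $P\Phi(G)$ before Frattini can be invoked. Once that is in place, the genuine content is the non-generator property of $\Phi(G)$, which is what converts a generation identity for $G$ into the normality of $P$.
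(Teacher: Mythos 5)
Your proof is correct. Note that the paper gives no argument for this lemma at all---it simply cites Huppert (III-3.4 and III-4.5)---so there is no in-paper proof to compare against; what you have written is the classical argument (part (b) is essentially Gasch\"utz's theorem), and its key steps are all sound: the image $P\Phi(G)/\Phi(G)$ is the unique, hence characteristic, Sylow $p$-subgroup of the nilpotent group $N/\Phi(G)=F(G/\Phi(G))$, so $P\Phi(G)\trianglelefteq G$; the Sylow $p$-subgroup of the nilpotent group $\Phi(G)$ is normal in $G$, hence contained in $P$, which makes $P$ a Sylow $p$-subgroup of $P\Phi(G)$; and the Frattini argument combined with the non-generator property then forces $P\trianglelefteq G$, so all Sylow subgroups of $N$ are normal and $N$ is nilpotent. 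One cosmetic slip worth fixing: in your parenthetical sketch that $\Phi(G)$ is nilpotent, the logic is stated out of order---a Sylow subgroup $Q$ of $\Phi(G)$ is not characteristic a priori; rather, the Frattini argument applied to $\Phi(G)\trianglelefteq G$ gives $G=\Phi(G)N_G(Q)$, the non-generator property collapses this to $G=N_G(Q)$, and only then does one conclude $Q\trianglelefteq G$, hence $Q\trianglelefteq\Phi(G)$. Since this is a standard fact and your main two-step Frattini argument carries out exactly this reasoning correctly, the slip does not affect the validity of the proof.
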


Next we prove the promised structure theorem for soluble groups of
bounded rank.
\begin{thm}
  \label{thm:2-8}
  Let $G$ be a soluble group of rank $r$. Then
  $G\big/F(G)$ has an abelian normal subgroup $A$
  of order at most $\big|F(G)\big/\Phi(G)\big|$
  and of index bounded in terms of $r$.
  Moreover, if $G$ has no section
  $\Aff(E,C)$ of affine cyclic type with $|C|> T$,
  then we have $|A|\le (T!)^r$.
\end{thm}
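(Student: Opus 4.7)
The plan is to first reduce to the case $\Phi(G)=1$ via \fref{lem:2-7}: both $G/F(G)$ and the order $|F(G)/\Phi(G)|$ are preserved when passing from $G$ to $G/\Phi(G)$, and the hypothesis on $\Aff$-sections is inherited by quotients. Once $\Phi(G)=1$, \fref{lem:2-3} furnishes a decomposition $F := F(G) = E_1 \times \cdots \times E_s$ into minimal elementary abelian normal subgroups, together with a faithful embedding of $G/F$ into $\prod_i \Aut(E_i)$, i.e.\ $\CC_{G/F}(F) = 1$. Grouping the $E_i$ by their underlying prime as $F = \prod_p F_p$, I consider the induced map $\pi_p \colon G/F \to \Aut(F_p) = \GL(n_p, p)$, whose image $H_p$ is a soluble completely reducible linear group with $n_p \le r$. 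By \fref{prop:2-6}, each $H_p$ contains an abelian normal subgroup $B_p$ of order at most $p^{n_p}$, of index bounded by some $f(r)$, and of order coprime to $p$.

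Next I define
\[
  A := (G/F) \cap \prod_p B_p
\]
inside $\prod_p H_p \supseteq G/F$. Since $G/F$ embeds faithfully in $\prod_p H_p$, any commutator of elements of $A$ lies in $\bigcap_p \ker \pi_p = \CC_{G/F}(F) = 1$, so $A$ is abelian and normal in $G/F$. The inclusion $A \hookrightarrow \prod_p B_p$ yields $|A| \le \prod_p |B_p| \le \prod_p |F_p| = |F| = |F(G)/\Phi(G)|$. To bound $[G/F:A]$, observe that $(G/F)/A$ embeds into $\prod_p (H_p/B_p)$, each component of order at most $f(r)$; hence the exponent of $(G/F)/A$ divides the lcm of the integers up to $f(r)$. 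Combined with the inherited rank bound $\le r$ and solubility, this forces $|(G/F)/A|$ to be bounded in terms of $r$ alone. I expect this deduction to be the main technical obstacle: it uses the standard facts that a soluble group of bounded rank has bounded Fitting length, and that a nilpotent group of bounded rank and bounded exponent has bounded order.

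For the refined bound $|A| \le (T!)^r$ under the $\Aff$-section hypothesis, I show that $\exp(A)$ divides $T!$. Given any $x \in A$ of prime power order $q^e$, the embedding $A \hookrightarrow \prod_p B_p$ gives $x = (x_p)_p$, and some component $x_p$ has order exactly $q^e$, with $q \ne p$ because $|B_p|$ is coprime to $p$. Since $\langle x_p \rangle$ is cyclic of prime power order and acts faithfully on $F_p = \bigoplus_{p_i=p} E_i$, while every nontrivial subgroup of $\langle x_p \rangle$ contains its unique minimal subgroup, an intersection-of-kernels argument forces $\langle x_p \rangle$ to act faithfully on some constituent $E_i$. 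Taking the $p$-regular part $\tilde x \in G$ of any preimage of $x$ (it still maps onto $x_p$, since $|x_p|$ is coprime to $p$), one has $\langle \tilde x \rangle \cap E_i = 1$, so $E_i \rtimes \langle \tilde x \rangle \le G$, and the kernel $K$ of the action of $\langle \tilde x \rangle$ on $E_i$ is central, with $(E_i \rtimes \langle \tilde x \rangle)/K \cong \Aff(E_i, \langle x_p \rangle)$. This is a section of $G$, so by hypothesis $q^e \le T$. Hence every prime power dividing $\exp(A)$ is at most $T$, so $\exp(A)$ divides the lcm of $1,\ldots,T$, which is at most $T!$. Since $A$ has rank at most $r$ as a subgroup of $G/F$, the elementary divisor decomposition gives $|A| \le \exp(A)^{\rank(A)} \le (T!)^r$.
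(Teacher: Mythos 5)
Your proof is correct, and its skeleton coincides with the paper's: reduce to $\Phi(G)=1$ via \fref{lem:2-7}, decompose $F(G)$ via \fref{lem:2-3}, apply \fref{prop:2-6} to the induced linear groups, pull back and intersect to get $A$, and bound $\exp(A)$ using the excluded $\Aff$-sections. You deviate in two places, one of which is substantive. First, you group the minimal normal subgroups by their primes and apply \fref{prop:2-6} to the completely reducible groups $H_p\le\GL(n_p,p)$, where the paper applies it to each irreducible $G_i=G/\CC_G(E_i)$ separately; this is cosmetic, except that it forces your unique-minimal-subgroup argument to descend from $F_p$ to a single faithful constituent $E_i$ — a step the paper gets for free since each $G_i$ already acts faithfully and irreducibly on $E_i$. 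Second, and more significantly, for the index bound the paper simply cites \cite[Cor.\ 1.1.2]{Lubotzky_Segal}: since $G/F$ is $r$-generated and each pullback of $B_p$ (resp.\ $\hat A_i$) has index at most $f(r)$, the intersection of \emph{all} subgroups of index at most $f(r)$ already has index bounded in terms of $r$ — there are at most $\bigl(f(r)!\bigr)^r$ such subgroups. This one-line argument applies verbatim in your setup and dissolves what you flag as ``the main technical obstacle'': your alternative route via $\exp\bigl((G/F)/A\bigr)\mid\mathrm{lcm}\bigl(1,\dots,f(r)\bigr)$, bounded Fitting length of soluble groups of bounded rank, and bounded order of nilpotent groups of bounded rank and exponent is valid (the two ``standard facts'' do hold, via Zassenhaus's theorem on soluble linear groups acting on chief factors, and via powerful subgroups of bounded index in $p$-groups of bounded rank), but it imports noticeably heavier machinery. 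On the other side of the ledger, your treatment of the refined bound is \emph{more} complete than the paper's: where the paper says only ``by our conditions, the orders of the elements of $A_i$ are not divisible by prime powers larger than $T$,'' you correctly supply the missing construction — the $p$-regular lift $\tilde x$ with $\langle\tilde x\rangle\cap E_i=1$, the central kernel $K$, and the resulting section $(E_i\rtimes\langle\tilde x\rangle)/K\isom\Aff(E_i,C)$ with $|C|=q^e$ — which is exactly the argument the paper leaves implicit.
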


\begin{proof}
  By \fref{lem:2-7}, it is sufficient to consider soluble groups with
  trivial Frattini subgroup.  We use \fref{lem:2-3} and 
  the notation of that lemma.  By \fref{prop:2-6}, each of the quotient
  groups $G_i$ has an abelian normal subgroup $A_i$ of size at most $E_i$ and
  of index bounded in terms of $r$, say at most $f(r)$.
  Moreover, $|E_i|$ and $|A_i|$ are relatively prime.
  Hence, by our conditions,
  the orders of the elements of $A_i$
  are not divisible by prime powers larger than $T$ and 
  therefore the exponent of $A_i$ divides $T!$.
  Consider the inverse images $\hat A_i$ of the subgroups $A_i$ in $G/F(G)$.
  Let $A$ be the intersection of the subgroups $\hat A_i$.
  Since $G/F(G)$ is embeddable into the
  direct product of the $G_i$,
  and $A$ corresponds to a subgroup
  of the direct product of the $A_i$,
  we see that $A$ is an abelian normal subgroup
  of size at most $|E_1|\cdots|E_k|=\big|F(G)\big/\Phi(G)\big|$.
  Moreover, the exponent of $A$ divides $T!$.

  Since $G$ itself can be generated by $r$ elements, the intersection of all
  of its subgroups of index at most $f(r)$ has bounded index in terms of $r$
  (see \cite[Cor. 1.1.2]{Lubotzky_Segal}),
  hence the same is true for $G/A$.

  Now $A$ is  abelian of rank at most $r$,
  hence it is the product of at most $r$ cyclic subgroups
  of order at most $T!$,
  and therefore $|A|\le(T!)^r$.
  The proof is complete.
\end{proof}

Combining the previous facts we prove our first result which
characterizes nilpotent by bounded groups among groups of bounded
rank.

\begin{thm}
  \label{thm:2-9}
  Let $G$ be a finite group of rank $r$.
  Assume that $G$ has no section $\Aff(E,C)$ of affine cyclic type
  with $|C|> T$. Then $G$ has a
  nilpotent normal subgroup of index bounded in terms of $r$ and $T$.
\end{thm}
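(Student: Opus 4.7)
The plan is to reduce the problem to soluble subgroups via \fref{prop:2-2}. What must be established is that every soluble subgroup of $G$ contains a nilpotent normal subgroup of index uniformly bounded in terms of $r$ and $T$; the proposition then lifts the conclusion to all of $G$.

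So let $S \le G$ be an arbitrary soluble subgroup. Then $S$ has rank at most $r$, since every subgroup of $S$ is a subgroup of $G$ and hence $r$-generated, and the section hypothesis of the theorem is inherited by $S$, because every section of $S$ is also a section of $G$. I would then apply \fref{thm:2-8} to $S$: it produces an abelian normal subgroup $A \normal S/F(S)$ with $|A| \le (T!)^r$ and with $[S/F(S):A]$ bounded in terms of $r$ alone. Multiplying, the order of $S/F(S)$ is bounded in terms of $r$ and $T$, and since $F(S)$ is characteristic in $S$ and nilpotent by definition, it is a nilpotent normal subgroup of $S$ of index at most some $t = t(r,T)$.

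With this, the hypothesis of \fref{prop:2-2} is satisfied for the constant $t$, and that proposition yields a nilpotent normal subgroup of $G$ of index bounded in terms of $r$ and $t$, hence in terms of $r$ and $T$, as required.

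The serious work has already been absorbed into \fref{thm:2-8}, where the section hypothesis is exploited to force the exponent of $A$ to divide $T!$ (and ultimately $|A| \le (T!)^r$ via rank), and into \fref{prop:2-2}, whose proof relies on the CFSG-based structure theorem \fref{thm:2-1} and the $\PSL(2,q)$-factorisation of Liebeck, Nikolov and Shalev. The proof of \fref{thm:2-9} itself is therefore essentially a synthesis of the two preceding results; the only points to verify are the inheritance of both hypotheses (bounded rank and absence of large affine cyclic sections) under passage to subgroups, which is immediate. There is no substantial new obstacle at this stage.
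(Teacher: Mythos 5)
Your proposal is correct and coincides with the paper's own proof: the authors likewise apply \fref{thm:2-8} to every soluble subgroup to bound $\big|S/F(S)\big|$ in terms of $r$ and $T$, and then invoke \fref{prop:2-2} to pass to all of $G$. Your verification of the inheritance of the rank bound and of the section hypothesis under passage to subgroups is exactly the (immediate) detail the paper leaves implicit.
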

\begin{proof}
  By \fref{thm:2-8}, if $H$ is a soluble group satisfying the above
  conditions, then
  $\big|H\big/F(H)\big|$ is bounded in terms of $r$ and $T$.
  In particular,
  this holds for all soluble subgroups of a group $G$ satisfying these
  conditions. Using \fref{prop:2-2} we obtain our statement.
\end{proof}

Note that if a finite group has a nilpotent normal subgroup of index $T$,
then it can not have a section $\Aff(E,C)$ of affine cyclic type with
$|C|> T$.

We will also give another characterization in terms of certain
excluded subgroups.

We need the following.

\begin{lem}
  \label{lem:2-10} 
  Let $A\isom\Aff(E,C)$ be a group of affine cyclic type.
  Assume that for a
  finite group $G$, we have $G/N\isom A$
  and that no proper subgroup of $G$ has a
  quotient isomorphic to $A$.
  Then $G$ is a group of the form $P\semidirect \BZ_m$,
  where $P$ is a $p$-group, $m$ is a power of a prime $q\ne p$,
  and the image of $\BZ_m$ in $\Aut(P)$ has order at least $|C|$.
\end{lem}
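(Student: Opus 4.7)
Let $\pi\colon G\to A=E\semidirect C$ denote the given surjection with kernel $N$, write $|E|=p^{s}$ and $|C|=q^{l}$ with $p\ne q$, and set $L=\pi^{-1}(E)$, a normal subgroup of $G$ with $G/L\isom C$. Since $|G/L|$ is coprime to $p$, every Sylow $p$-subgroup $P$ of $L$ is also a Sylow $p$-subgroup of $G$, and $\pi(P)$, being a Sylow $p$-subgroup of the $p$-group $\pi(L)=E$, equals $E$. Similarly, I can choose a Sylow $q$-subgroup $Q$ of $G$ with $\pi(Q)=C$ and pick an element $y\in Q$ with $\pi(y)$ a generator of $C$; then $y$ automatically has $q$-power order. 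The plan is to establish that $P\normal G$ and then that $G=P\semidirect\langle y\rangle$, invoking the minimality hypothesis at each step.

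The key step, which I expect to be the main obstacle, is showing that $P$ is normal in $G$. I will apply the Frattini argument to the normal subgroup $L$: since $P$ is Sylow in $L$, we have $G=L\cdot N_G(P)$. Applying $\pi$ gives $A=E\cdot\pi(N_G(P))$, and combined with the obvious inclusion $\pi(N_G(P))\supseteq\pi(P)=E$ this forces $\pi(N_G(P))=A$. Thus $N_G(P)$ surjects onto $A$, so the minimality hypothesis on $G$ yields $N_G(P)=G$, i.e.\ $P\normal G$.

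Once $P$ is normal, $\langle P,y\rangle=P\langle y\rangle$ is a subgroup, and $P\cap\langle y\rangle=1$ by coprime orders, so $\langle P,y\rangle=P\semidirect\langle y\rangle$. Its image under $\pi$ contains both $E=\pi(P)$ and the generator $\pi(y)$ of $C$, hence equals $A$; minimality then forces $G=\langle P,y\rangle=P\semidirect\langle y\rangle$, with $m:=|\langle y\rangle|$ a power of $q$. Finally, the conjugation action of $y$ on $P$ descends through $\pi$ to the conjugation action of $\pi(y)$ on $\pi(P)=E$; since $C$ acts faithfully on $E$ and $\pi(y)$ generates $C$, this induced action has order exactly $|C|$. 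Hence whenever $y^{j}$ centralizes $P$ one must have $|C|\mid j$, which shows that the image of $\langle y\rangle$ in $\Aut(P)$ has order at least $|C|$, completing the proof.
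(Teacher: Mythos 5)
Your proof is correct, but it takes a genuinely different route from the paper's. The paper proceeds structurally: minimality puts $N$ inside the Frattini subgroup $\Phi(G)$, so the preimage $\hat E$ of $E$ satisfies $\hat E'\subseteq\Phi(G)$ and is nilpotent by Wielandt's theorem; Schur--Zassenhaus (applied to Sylow subgroups of $\hat E$ for primes other than $p$ and $q$, which would admit complements surjecting onto $A$) then forces $\hat E$ to be a $\{p,q\}$-group, after which the Sylow $p$-subgroup of $\hat E$ is characteristic, hence normal in $G$, and $G/S_p$ is a $q$-group containing a cyclic subgroup projecting onto $C$. You bypass all of this machinery with a Frattini argument: since $P$ is Sylow in the normal subgroup $L=\pi^{-1}(E)$, you get $G=L\cdot N_G(P)$, whence $\pi(N_G(P))\supseteq E$ forces $\pi(N_G(P))=A$, and minimality immediately yields $P\trianglelefteq G$; a second application of minimality to $P\langle y\rangle$, with $y$ a $q$-element mapping to a generator of $C$, finishes the structure claim, and your final step (faithfulness of the $C$-action on $E$ pulled back through $\pi$) coincides with the paper's. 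Your route is shorter and more elementary -- it needs neither Wielandt's theorem nor Schur--Zassenhaus, and it shows along the way that $P$ is a full Sylow $p$-subgroup of $G$ -- while the paper's argument delivers extra structural information (nilpotency of $N$ and of $\hat E$) that fits its broader template from the proof of its Proposition on soluble subgroups, but none of that extra information is needed for the lemma itself.
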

\begin{proof}

  By our condition on the minimality of $G$, the normal subgroup $N$ is
  contained in the Frattini subgroup $\Phi(G)$ of $G$ (see the proof of
  \fref{prop:2-2}), hence it is nilpotent. Moreover, the pre-image $\hat E$ of $E$ in $G$ is a normal subgroup of $G$ which satisfies $\hat E'\subseteq N\subseteq \Phi(G)$. Hence $\hat E$ is nilpotent by Wielandt's theorem (\cite[III-3.11]{Huppert_I}).

  If $S$ is a Sylow subgroup
  of $\hat E$ of order coprime to $|E|$ and $|C|$, then it is normal in $G$
  (since it is a characteristic subgroup of $\hat E$) and by the
  Schur-Zassenhaus theorem
  (see \cite[Ch. 2, (8.10)]{Suzuki_I})
   it has a complement. But then this complement has a quotient isomorphic to $A$,
  a contradiction.

  The Sylow $p$-subgroup $S_p$ is also normal in $G$, and it follows that the quotient $G/S_p$ is a $q$-group for some prime $q\neq p$.
  This quotient has a cyclic subgroup which projects onto $C$.
  By our minimality condition, 
  $G$ is actually the split extension of $S_p$
  by this cyclic subgroup. Our statement follows.
\end{proof}

Recall that a $p$-group $P$ is said to be a special $p$-group if either
$P$ is an elementary abelian $p$-group or we have $\Phi(G)=\CZ(G)=G'$
and $G'$ is elementary abelian. In particular, such a group has class
at most $2$ and exponent at most $p^2$.

We need a well-known result of Hall-Higman
(see \cite[Ch. 4, (4.19)]{Suzuki_II}).

\begin{thm}[Hall-Higman]
  \label{thm:2-11}
  Assume that the group $A$ acts on a group $H$, where $A$ and $H$ have
  coprime order.  Suppose that a subgroup $R$ of $A$ acts on $H$
  non-trivially. Let $P$ be an $A$-invariant subgroup of $H$ that is
  minimal among the $A$-invariant subgroups of $H$ on which
  $R$ acts nontrivially. Then $P$ is a special
  $p$-group for some prime $p$.  The group $R$ acts on
  $\Phi(P)$ trivially, and the group $A$ acts on
  $P\big/\Phi(P)$ irreducibly.
\end{thm}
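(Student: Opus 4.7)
The plan is to exploit the minimality of $P$ together with two standard coprime-action facts: first, every proper $A$-invariant subgroup of $P$ must be centralized by $R$; second, if $R$ acts trivially on both an $A$-invariant $N\normal P$ and on $P/N$, then $R$ acts trivially on $P$. The first easy consequence is that the Frattini subgroup $\Phi(P)$ is characteristic in $P$, hence $A$-invariant, and is proper (since $P\neq 1$), so minimality forces $R$ to centralize $\Phi(P)$; this proves the second assertion of the theorem.

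Next I would show that $P$ is a $p$-group. Assume first that $P$ is nilpotent; then $P=\prod_q O_q(P)$, each Sylow being characteristic and hence $A$-invariant. If more than one prime divides $|P|$, each Sylow is proper, hence by minimality $R$-centralized, so $R$ centralizes all of $P$, a contradiction. The main obstacle is showing that $P$ must be nilpotent in the first place. My approach is to invoke the generalized Fitting subgroup $F^*(P)=F(P)E(P)$, which is characteristic with $\CC_P(F^*(P))\le F^*(P)$: if $F^*(P)\subsetneq P$, minimality gives $R$ trivial on $F^*(P)$, so $[R,P]\le \CC_P(F^*(P))\le F^*(P)$, and the three-subgroups lemma combined with the coprime vanishing fact forces $R$ itself to act trivially on $P$, a contradiction. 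Hence $P=F^*(P)$, and an analogous argument applied to the layer $E(P)$ (using that it is a central product of quasi-simple components permuted by $A$) rules out a nontrivial layer, leaving $P=F(P)$ nilpotent.

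With $P$ now a $p$-group and $|A|$ coprime to $p$, the coprime vanishing gives that $R$ acts nontrivially on the elementary abelian quotient $P/\Phi(P)$. For the irreducibility assertion, suppose $\bar V\le P/\Phi(P)$ is a proper nonzero $A$-invariant subspace; Maschke's theorem produces an $A$-invariant complement $\bar V'$, also proper and nonzero. Pulling back to $V,V'\le P$, each is $A$-invariant, proper, contains $\Phi(P)$, is normal in $P$ (as $P/\Phi(P)$ is abelian), and $VV'=P$. By minimality, $R$ centralizes both $V$ and $V'$, hence the product $VV'=P$, a contradiction. So $A$ acts irreducibly on $P/\Phi(P)$.

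Finally, to establish the special $p$-group structure, I would examine the characteristic subgroups $P'$, $[P,\Phi(P)]$, and $\Phi(P)^p$, all contained in $\Phi(P)$ and $A$-invariant. Combining the triviality of $R$ on $\Phi(P)$, the $A$-irreducibility of $P/\Phi(P)$, and further applications of coprime vanishing to appropriate $A$-invariant quotients, one deduces $P'=\Phi(P)=\CZ(P)$ and that $\Phi(P)$ has exponent $p$, which is exactly the special $p$-group condition. The principal difficulty throughout is the nilpotence step in the second paragraph; for solvable $H$ the argument is considerably cleaner, using $\CC_P(F(P))\le F(P)$ directly, but at the stated generality one genuinely needs the generalized Fitting subgroup machinery together with structural input about quasi-simple groups.
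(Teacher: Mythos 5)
The paper offers no proof of this statement at all: it is quoted verbatim from Suzuki \cite[Ch.~4, (4.19)]{Suzuki_II}, so your attempt must be measured against the classical argument. Several of your steps are correct and standard: minimality does force $R$ to centralize every proper $A$-invariant subgroup, in particular $\Phi(P)$; coprime vanishing does give nontriviality of $R$ on $P/\Phi(P)$; and your Maschke argument for the irreducibility of the $A$-action on $P/\Phi(P)$ is exactly right. But the two places you yourself flag as delicate are genuine gaps. In the reduction to a $p$-group, the elimination of a nontrivial layer $E(P)$ is only asserted (``an analogous argument \dots rules out''), and it is not analogous: when $A$ permutes the quasi-simple components of $P=E(P)$ \emph{transitively}, there is no proper $A$-invariant orbit-product to feed into minimality, so the $F^*$ route stalls precisely there. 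Moreover your closing claim that one ``genuinely needs the generalized Fitting subgroup machinery'' is false --- the classical proof is elementary. By coprime action (Schur--Zassenhaus together with Feit--Thompson, or Glauberman's lemma), $P$ contains an $A$-invariant Sylow $\ell$-subgroup $S_\ell$ for every prime $\ell$ dividing $|P|$; since $P=\langle S_\ell : \ell\mid |P|\rangle$, triviality of $R$ on every $S_\ell$ would force triviality on $P$, so $R$ acts nontrivially on some $S_\ell$, and minimality gives $P=S_\ell$. This single observation replaces your entire second paragraph, requires no nilpotence, no $F^*(P)$, and no structural input about quasi-simple groups, and it also disposes of the layer case you left open.

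The final step is likewise only a gesture, and as stated it is wrong: if $P$ is abelian, the correct conclusion is that $P$ is elementary abelian, whereas your claimed chain $P'=\Phi(P)=\CZ(P)$ fails there ($P'=1$ while $\CZ(P)=P$); the definition in the paper is explicitly a disjunction and you need the case split. The missing derivations in the nonabelian case run as follows. The three-subgroups lemma with $[\Phi(P),R]=1$ gives $[P,R]\le \CC_P(\Phi(P))$; since $R$ is nontrivial on $P/\Phi(P)$, the image of $\CC_P(\Phi(P))$ in $P/\Phi(P)$ is a nonzero $A$-submodule, so irreducibility yields $\CC_P(\Phi(P))\,\Phi(P)=P$, whence $\CC_P(\Phi(P))=P$ because $\Phi(P)$ consists of non-generators, i.e.\ $\Phi(P)\le\CZ(P)$; the reverse inclusion $\CZ(P)\le\Phi(P)$ follows from irreducibility again since $P$ is nonabelian. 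Next, the $p$-th power map induces a well-defined $A$-equivariant homomorphism $P/\Phi(P)\to \Phi(P)\big/P'\Phi(P)^p$ (the commutator correction term dies modulo $P'$), which must vanish because $R$ acts trivially on the target but nontrivially on the irreducible source; hence $\Phi(P)=P'\Phi(P)^p$, and therefore $\Phi(P)=P'$ since $\Phi(P)^p$ is the Frattini subgroup of the abelian group $\Phi(P)$. Finally $[x,y]^p=[x^p,y]=1$ because $x^p\in\CZ(P)$, so $\Phi(P)=P'$ is elementary abelian. In the abelian case, compare $\Omega_1(P)$ with $P$ using the coprime vanishing fact you already stated. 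None of this is difficult, but ``one deduces'' is exactly where the content of the theorem lives, and your sketch does not supply it.
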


\begin{rem}
  \label{rem:2-12}
  This theorem implies immediately that in the conclusion of
  \fref{lem:2-10} we may take $P$ to be a special
  $p$-group (we apply \fref{thm:2-11} by setting $A$ to be $\BZ_m$,
  $H$ to be $P$ and $R$ to be the smallest subgroup of
  $\BZ_m$ which acts on $H$ nontrivially).
\end{rem}

We arrived at the central result of this section.

\begin{thm} [same as \fref{thm:Nilpotent-or-Special-by-cyclic_v0}]
  \label{thm:Nilpotent-or-Special-by-cyclic}
  Let $G$ be a finite group of rank
  $r$. Then for all $T > 0$, there is an integer $I(r,T) > 0$ such that
  one of the following holds.
  \begin{enumerate}[(a)]
  \item
    $G$ has a nilpotent normal subgroup of index at most $I(r,T)$.
  \item \label{item:3332}
    For some distinct primes $p$ and $q$ the group $G$ has a
    subgroup of the form $P\semidirect \BZ_m$,
    where $P$ is a special $p$-group, $m$ is a
    power of $q$, and the image of $\BZ_m$ in $\Aut(P)$ has order at
    least $T$.
    In particular, we have $|P|\le p^{2r}$.
\end{enumerate}
\end{thm}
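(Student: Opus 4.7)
The plan is to obtain the theorem by assembling \fref{thm:2-9} with \fref{lem:2-10} and \fref{rem:2-12}, all of which have just been proved above. The first step will be a dichotomy extracted directly from \fref{thm:2-9}: either $G$ already contains a nilpotent normal subgroup of index bounded in terms of $r$ and $T$, in which case conclusion (a) holds and $I(r,T)$ may be defined to be this bound; or else $G$ possesses a section isomorphic to some $\Aff(E,C)$ with $|C|>T$, and we work towards conclusion (b).

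In the latter case I would pass to a subgroup $H\le G$ that is \emph{minimal} among the subgroups of $G$ admitting $\Aff(E,C)$ as a quotient. Since minimality in $G$ entails that no proper subgroup of $H$ itself has $\Aff(E,C)$ as a quotient, \fref{lem:2-10} applies to $H$ and yields a decomposition $H = P_0\semidirect \BZ_m$, with $P_0$ a $p$-group, $m$ a power of a prime $q\ne p$, and the image of $\BZ_m$ in $\Aut(P_0)$ of order at least $|C|>T$.

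Next I would refine $P_0$ to a \emph{special} $p$-group using Hall--Higman (\fref{thm:2-11}) exactly as sketched in \fref{rem:2-12}: let $R$ be the smallest subgroup of $\BZ_m$ acting non-trivially on $P_0$, and let $P\le P_0$ be a minimal $\BZ_m$-invariant subgroup on which $R$ still acts non-trivially. Then $P$ is special by Hall--Higman. The delicate point, and the main local obstacle in the assembly, is to verify that $\BZ_m$ still has a large image in $\Aut(P)$. I would argue as follows: because $\BZ_m$ is cyclic of $q$-power order, its subgroup lattice is a chain. By minimality of $R$, the kernel of $\BZ_m\to\Aut(P_0)$ is exactly the maximal subgroup of $\BZ_m$ sitting strictly below $R$. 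On the other hand, the kernel of $\BZ_m\to\Aut(P)$ contains this same subgroup (since $P\subseteq P_0$) but cannot contain $R$ itself (as $R$ acts non-trivially on $P$), so by the chain structure the two kernels coincide. Therefore the image of $\BZ_m$ in $\Aut(P)$ has the same order as its image in $\Aut(P_0)$, namely at least $|C|>T$.

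This will produce the required subgroup $P\semidirect\BZ_m\le G$ for conclusion (b), and the bound $|P|\le p^{2r}$ will follow at once from the definition of a special $p$-group: both $\Phi(P)$ and $P/\Phi(P)$ are elementary abelian, and since $P\le G$ inherits rank at most $r$, each factor has order at most $p^r$. All the genuinely substantive work lies upstream, in \fref{thm:2-9} (which relies on Shalev's structure theorem, hence on CFSG) and \fref{lem:2-10}; once those are in hand, the present theorem is essentially a bookkeeping exercise that glues these ingredients together.
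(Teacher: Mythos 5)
Your proposal is correct and follows essentially the same route as the paper's proof: the dichotomy from \fref{thm:2-9}, followed in the affine-cyclic case by \fref{lem:2-10} together with the Hall--Higman refinement of \fref{rem:2-12}. Your chain-of-subgroups argument showing that the kernels of $\BZ_m\to\Aut(P_0)$ and $\BZ_m\to\Aut(P)$ coincide correctly supplies the one detail that \fref{rem:2-12} leaves implicit.
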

\begin{proof}
  If $G$ has a section of affine cyclic type with $|C|> T$, then \fref{item:3332} holds
  by \fref{lem:2-10} and \fref{rem:2-12}.
  Otherwise, by \fref{thm:2-9}, $G$ has a nilpotent normal subgroup
  of index bounded in terms of $r$ and $T$.
\end{proof}

As a corollary of \fref{thm:Nilpotent-or-Special-by-cyclic} we will also show that if a finite
group $G$ of bounded rank does not contain certain abelian-by-cyclic
subgroups, then $G$ is abelian-by-bounded. While we do not use this
corollary in our paper, it should be useful for proving analogues of
Jordan's Theorem in various situations.  The corollary was explicitly
stated in a talk by the second named author  \cite{Pyber-lecture}.
A somewhat weaker result has independently (see the
last sentence on page 824 of \cite{Riera_Turull} concerning this issue) been
obtained by Mundet i Riera and Turull as the main theorem of \cite{Riera_Turull}. Our
arguments are very much different from those in \cite{Riera_Turull}.

We need to prove two lemmas on finite $p$-groups.

The proof of the first lemma is based on the following useful result
of Chermak-Delgado (see \cite[1.41]{Isaacs}).

\begin{prop}
  \label{prop:2-14}
  Let $G$ be a finite group.
  Then $G$ has a characteristic abelian subgroup $N$ such that
  $|G:N|\le|G:A|^2$ for every abelian subgroup $A$ of $G$.
\end{prop}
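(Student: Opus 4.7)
The plan is to prove this via the \emph{Chermak--Delgado measure}, a standard tool associated with their work. Define, for every subgroup $H\le G$, the quantity $m(H)=|H|\cdot\big|\CC_G(H)\big|$. Note that $m$ is invariant under every automorphism of $G$, since such an automorphism permutes the subgroups of $G$ and preserves centralizers. The heart of the argument is an inequality of sub- and supermodular type, and the extraction of a distinguished characteristic abelian subgroup from the set of subgroups where $m$ attains its maximum.

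First I would establish the \emph{Chermak--Delgado inequality}:
\[
m(H\cap K)\cdot m\big(\langle H,K\rangle\big)\;\ge\;m(H)\cdot m(K)
\qquad\text{for all subgroups $H,K\le G$.}
\]
This boils down to two elementary estimates. On the group side, $|HK|\cdot|H\cap K|=|H|\cdot|K|$ and $|\langle H,K\rangle|\ge|HK|$. On the centralizer side, $\CC_G(\langle H,K\rangle)=\CC_G(H)\cap\CC_G(K)$, while $\CC_G(H\cap K)\supseteq\CC_G(H)\CC_G(K)$ as sets, so $|\CC_G(H\cap K)|\ge|\CC_G(H)|\cdot|\CC_G(K)|/|\CC_G(H)\cap\CC_G(K)|$. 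Multiplying these two estimates gives the inequality.

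Next, let $M=\max_{H\le G}m(H)$ and let $\CL$ be the collection of subgroups attaining this maximum. The inequality above forces, for any $H,K\in\CL$, both $m(H\cap K)=m(\langle H,K\rangle)=M$, so that $\CL$ is closed under intersection and join; hence $\CL$ is a sublattice of the subgroup lattice. Let $N$ be its (unique) minimum element. Because $m$ is $\Aut(G)$-invariant, $\Aut(G)$ permutes $\CL$ and must fix its minimum, so $N$ is a characteristic subgroup of $G$. Moreover $\CC_G(N)\in\CL$ (indeed $m(\CC_G(N))\ge m(N)=M$ by the general observation that $m(\CC_G(H))\ge m(H)$, since $\CC_G(\CC_G(H))\supseteq H$), so by the minimality of $N$ we get $N\le\CC_G(N)$, i.e.\ $N$ is abelian.

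Finally, for any abelian subgroup $A\le G$ the inclusion $A\le\CC_G(A)$ gives $m(A)\ge|A|^2$, while by the maximality of $N$ in $\CL$ we have $m(N)=|N|\cdot|\CC_G(N)|\le|N|\cdot|G|$. Combining $|A|^2\le m(A)\le m(N)\le|N|\cdot|G|$ and rearranging yields $|G:N|\le|G:A|^2$, as required. The main technical point is really the Chermak--Delgado inequality of the first paragraph; once it is in place, the lattice structure and the extraction of a characteristic abelian subgroup follow cleanly, and the index estimate is essentially automatic.
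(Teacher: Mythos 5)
Your proof is correct and is precisely the standard Chermak--Delgado measure argument that the paper invokes by citation (Isaacs, \emph{Finite Group Theory}, Theorem~1.41), down to the submodular inequality, the sublattice of maximizers, and the characteristic abelian minimum. All steps check out, including the key verifications $\CC_G(H\cap K)\supseteq\CC_G(H)\CC_G(K)$ and $m(\CC_G(H))\ge m(H)$, so there is nothing to add.
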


\begin{lem}
  \label{lem:2-15}
  Let $P$ be a finite $p$-group. Assume
  that all metabelian normal subgroups of $P$ have an abelian subgroup
  of index at most $t$. Then $P$ has an abelian normal subgroup of
  index at most $t^{4\log t}$.
\end{lem}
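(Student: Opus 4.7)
The plan is to take $A\le P$ to be a normal abelian subgroup of maximal order, and to prove directly that $|P:A|\le t^{4\log t}$; this $A$ then is the desired subgroup. The argument has two parts: first bound the normal abelian subgroups of the quotient $P/A$, and then bound $|P/A|$ itself via an embedding into an automorphism group.

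First I will use the classical fact that in a finite $p$-group a normal abelian subgroup of maximal order is self-centralizing. Indeed, if $C_P(A)\supsetneq A$, then $C_P(A)/A$ is a nontrivial normal $p$-subgroup of $P/A$ and hence meets $Z(P/A)$ nontrivially; lifting a nontrivial element of the intersection gives some $x\in C_P(A)\setminus A$ with $[x,P]\le A$, and then $\langle A,x\rangle$ is abelian and normal in $P$, contradicting maximality of $A$. Thus $C_P(A)=A$, and the conjugation action embeds $P/A$ into $\Aut(A)$.

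Next I claim that every normal abelian subgroup of $P/A$ has order at most $t^2$. If $\bar N\normal P/A$ is abelian, its preimage $N\normal P$ contains $A$ with $N/A$ abelian, so $N'\le A$ and $N$ is metabelian. The hypothesis yields an abelian subgroup of $N$ of index at most $t$, and \fref{prop:2-14} (Chermak-Delgado) then gives a \emph{characteristic} abelian subgroup $N_0\le N$ with $|N:N_0|\le t^2$. Since $N_0$ is characteristic in $N$ and $N\normal P$, we have $N_0\normal P$, and maximality of $|A|$ forces $|N_0|\le|A|$, hence $|\bar N|=|N/A|\le t^2$.

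Now I apply the self-centralizing observation to $P/A$ itself: let $\bar B$ be a normal abelian subgroup of $P/A$ of maximal order, so $|\bar B|\le t^2$ and $C_{P/A}(\bar B)=\bar B$, and $(P/A)/\bar B$ embeds into $\Aut(\bar B)$, in fact into a Sylow $p$-subgroup thereof. For an abelian $p$-group $\bar B$ of order $p^n$, a standard bound (tight for elementary abelian groups, where $\Aut(\bar B)=\GL_n(\BF_p)$) gives $|\Aut(\bar B)|_p\le p^{n(n-1)/2}$. Combining,
\[
|P:A|=|\bar B|\cdot|(P/A):\bar B|\le p^n\cdot p^{n(n-1)/2}=p^{n(n+1)/2}\le p^{n^2},
\]
and from $p^n=|\bar B|\le t^2$, i.e., $n\le 2\log_p t\le 2\log t$, we obtain $|P:A|\le t^{4\log_p t}\le t^{4\log t}$, as required.

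The main technical point is the transfer in the second paragraph: turning the hypothesis on metabelian normal subgroups of $P$ into a bound on normal abelian subgroups of the quotient $P/A$. This is precisely where Chermak-Delgado is invoked, upgrading the abelian subgroup furnished by the hypothesis to a characteristic one (at the cost of squaring the index) so that the maximality of $A$ can be used to conclude.
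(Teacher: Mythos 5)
Your proof is correct and takes essentially the same route as the paper: a maximal-order abelian normal subgroup $A$, the Chermak--Delgado upgrade of the hypothesis to a \emph{characteristic} abelian subgroup of index at most $t^2$ in the metabelian preimage of an abelian normal subgroup of $P/A$ (so maximality of $A$ bounds that subgroup by $t^2$), and the bound $|P/A|\le p^{n(n+1)/2}$. The only difference is cosmetic: where the paper cites the Burnside--Miller theorem for this last bound, you reprove it via the self-centralizing property of a maximal-order abelian normal subgroup and the $p$-part of $\Aut(\bar B)$ — which also makes your opening paragraph (proving $C_P(A)=A$ and the unused embedding $P/A\hookrightarrow\Aut(A)$) redundant except as a template for the argument applied to $\bar B$ in $P/A$.
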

\begin{proof}
  Let $A$ be an abelian normal subgroup of maximal order in $P$. Let
  $\tilde B$ be an abelian normal subgroup of maximal order in
  $\tilde P=P/A$ and set $b=|\tilde B|$. By a classical result of
  Burnside-Miller
  (see \cite[Cor. 2 of  Ch. 2, Thm. 1.17]{Suzuki_I}),
  we have
  $|\tilde P|\le p^{(\log_p b)(\log_pb+1)/2}\leq b^{\log b}$.  The inverse image $B$ of $\tilde B$ in
  $P$ is a metabelian normal subgroup of $P$,
  hence by assumption
  it has an abelian subgroup of index at most $t$.
  By \fref{prop:2-14}, \
  $B$ has an abelian characteristic subgroup $C$ of index at most
  $t^2$.  But $C$ (as a characteristic subgroup of a normal subgroup)
  is a normal subgroup of $P$. Hence by assumption we have $|C|\le|A|$,
  and therefore $t^2\ge b$.  We obtain
  $|\tilde P|\le(t^2)^{2\log t} = t^{4\log t}$, as required.
\end{proof}

\begin{lem}
  \label{lem:2-16}
   Let $P$ be a finite metabelian $p$-group of rank $r$. Assume that
   all abelian-by-cyclic subgroups of $P$ have an abelian
   subgroup of index at most $t$. Then $P$ has an abelian normal
   subgroup of index at most $t^{2r}$.
\end{lem}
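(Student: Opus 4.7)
The plan is to apply the Chermak–Delgado theorem (\fref{prop:2-14}) to $P$: to produce an abelian characteristic (hence normal) subgroup $N$ with $[P:N]\le t^{2r}$, it suffices to exhibit an abelian subgroup $B\le P$ with $[P:B]\le t^r$.

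I would first exploit the metabelian structure: $A:=P'$ is abelian and normal, and $P/A$ is abelian. Its minimal number of generators is bounded by the rank of $P$, so $P/A=\bigoplus_{i=1}^s\langle\bar x_i\rangle$ for some $s\le r$; lift to $x_i\in P$. For each $i$, the subgroup $H_i:=A\langle x_i\rangle$ is normal in $P$ (since $\langle\bar x_i\rangle$ is normal in the abelian quotient $P/A$) and abelian-by-cyclic, so by hypothesis it contains an abelian subgroup $B_i$ of index at most $t$. Setting $D_i:=B_i\cap A$ and $f_i:=[H_i:B_iA]$, a direct count gives the factorization $[H_i:B_i]=f_i\cdot[A:D_i]$, which forces both $f_i\le t$ and $[A:D_i]\le t$. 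I then choose $y_i\in B_i$ lifting $\bar x_i^{f_i}\in P/A$.

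The principal technical step, and the main obstacle, is to refine the $y_i$'s by replacing each $y_i$ with $y_iz_i$ for a suitable $z_i\in D_i$, so that $[y_i,y_j]=1$ for all $i\ne j$. The metabelian hypothesis enters crucially here: because $A$ is abelian, the commutator transforms by the bilinear identity
\[
[y_iz_i,\,y_jz_j]\;=\;[y_i,y_j]\cdot[y_i,z_j]\cdot[z_i,y_j]\qquad\text{in }A,
\]
and each map $z\mapsto[z,y_j]$ from $D_i$ into $A$ is a group homomorphism, so the task reduces to solving a linear system over the abelian group $A$, which I plan to carry out inductively over the pairs $(i,j)$ using the freedom provided by the bounds $[A:D_i]\le t$. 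Granted this, the subgroup $B:=D\cdot\langle y_1,\dots,y_s\rangle$ with $D:=\bigcap_i D_i$ is abelian ($D$ is centralized by each $y_i\in B_i$, and the $y_i$'s pairwise commute by construction), and its index satisfies
\[
[P:B]\;\le\;\Bigl(\prod_i f_i\Bigr)\cdot[A:D]\;\le\;\prod_i\bigl(f_i\cdot[A:D_i]\bigr)\;\le\;t^s\;\le\;t^r,
\]
whence \fref{prop:2-14} produces the required abelian normal subgroup of $P$ of index at most $t^{2r}$.
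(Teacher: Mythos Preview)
Your ``principal technical step'' --- arranging for the modified $y_i$'s to pairwise commute --- is not just a gap, it is impossible in general. Take $P$ to be the Heisenberg group of order $p^3$, so $A=P'=Z(P)\cong\BZ_p$ and $P/A\cong\BZ_p^2$ with $s=2$. Each $H_i=A\langle x_i\rangle$ is already abelian, so one may take $B_i=H_i$, $D_i=A$, $f_i=1$; then $y_1,y_2$ are lifts of $\bar x_1,\bar x_2$. In your bilinear identity the correction terms $[y_i,z_j]$ and $[z_i,y_j]$ vanish because every $z_i\in D_i=A=Z(P)$ is central, so the ``linear system'' reduces to $[y_1,y_2]=1$, which is simply false. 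No choice of the $z_i$'s (and no alternative choices of the $B_i$'s) can salvage this: any two lifts of $\bar x_1,\bar x_2$ have commutator equal to the fixed nontrivial element $[x_1,x_2]$. In other words, the obstruction lives in the cohomology class of the central extension and cannot be killed by adjusting coset representatives. The bounds $[A:D_i]\le t$ play no role in the solvability of this system.

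The paper circumvents this obstruction entirely by not trying to build a new abelian subgroup at all. It invokes Gillam's theorem to obtain an abelian \emph{normal} subgroup $A\le P$ whose order is maximal among all abelian subgroups of $P$ (not just normal ones). Then, writing $P/A$ as a product of at most $2r$ cyclic groups $\tilde C_i$ (using that a metabelian group of rank $r$ is a product of $2r$ cyclic subgroups), each preimage $C_i$ is abelian-by-cyclic, so by hypothesis has an abelian subgroup of index $\le t$; maximality of $|A|$ then forces $|\tilde C_i|=[C_i:A]\le t$, whence $[P:A]\le t^{2r}$ directly. No Chermak--Delgado step is needed, and no commuting lifts are required.
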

\begin{proof}
  By a result of Gillam \cite{Gillam}, a metabelian group $P$ has an
  abelian normal subgroup $A$ such that the order of any abelian
  subgroup of $P$ is at most $|A|$.
 It is easy to see that a metabelian group of rank $r$ is a product of $2r$ cyclic subgroups. Hence the quotient
  $\tilde P=P/A$ is a product of $2r$ cyclic groups
  $\tilde C_1,\tilde C_2,...,\tilde C_{2r}$. The inverse image $C_i$ of
  $\tilde C_i$ in $P$ is an abelian-by-cyclic group, and by
  assumption, 
  $A$ is an abelian subgroup of $C_i$ of largest order.  By our
  condition, we have
  $|\tilde C_i|\le\big|{C_i:A}\big|\le t$ (for $i=1,...,2r$).
  Hence $|{P:A}|\le t^{2r}$, as required.
\end{proof}

\begin{cor}
  \label{cor:2.17}
  Let $G$ be a finite group of rank $r$. Then for all $T > 0$, there is
  an integer $J(r,T) > 0$ such that one of the following holds:
  \begin{enumerate}[\indent(a)]
  \item \label{item:3}
    $G$ has an abelian normal subgroup of index at most $J(r,T)$.
  \item \label{item:4}
    For some distinct primes $p$ and $q$ the group $G$ contains
    the split extension of an elementary abelian $p$-group $E$ by a
    cyclic group $C$ of order $q^t$, such that the image of $C$ in
    $\Aut(E)$ has order at least $T$.
  \item \label{item:5}
    $G$ contains an abelian-by-cyclic $p$-subgroup $P$ which does
    not have an abelian normal subgroup of index at most $T$.
  \end{enumerate}
\end{cor}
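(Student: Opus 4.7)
The plan is to prove the contrapositive: assuming both (b) and (c) fail, we deduce (a). Apply \fref{thm:Nilpotent-or-Special-by-cyclic} to $G$ with a sufficiently large parameter $T_0=T_0(r,T)$ (for instance, $T_0=T^2(1+T^{2r})$) to obtain either (I) a nilpotent normal subgroup $N\normal G$ of index bounded in $r,T$, or (II) a subgroup $P\semidirect\BZ_m\le G$ with $P$ a special $p$-group of order at most $p^{2r}$ and with the image of $\BZ_m$ in $\Aut(P)$ of order at least $T_0$.

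In case (I), I would Sylow-decompose $N=\prod_p N_p$. For every prime $p>T$ at which $N_p$ is non-abelian, I pick $x,y\in N_p$ with $[x,y]\ne 1$ central, choosing $[x,y]$ in the last nontrivial term of the lower central series of $N_p$: then $\langle x,y\rangle$ is an abelian-by-cyclic subgroup in which every abelian subgroup has index a positive power of $p$, hence strictly greater than $T$, witnessing (c) and contradicting our hypothesis. Consequently $N_p$ is abelian for all $p>T$. For the finitely many primes $p\le T$, the failure of (c) combined with \fref{lem:2-16} shows that every metabelian normal subgroup of $N_p$ has an abelian subgroup of index at most $T^{2r}$, so \fref{lem:2-15} yields an abelian normal subgroup $A_p\le N_p$ of index bounded in $r,T$. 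Taking $A_p=N_p$ when $N_p$ is already abelian, the product $A=\prod_p A_p$ is abelian and normal in $N$ of bounded index, and its normal core in $G$ provides the abelian normal subgroup witnessing (a).

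In case (II), if $P$ is elementary abelian then $P\semidirect\BZ_m$ itself witnesses (b), a contradiction. Otherwise $Z(P)=P'=\Phi(P)$ is a proper nontrivial elementary abelian subgroup. Let $M_1,M_2$ denote the orders of the images of $\BZ_m$ acting on $P/Z(P)$ and $Z(P)$ respectively; since the automorphisms of $P$ trivial on both quotients form a $p$-group while $|\BZ_m|$ is coprime to $p$, the image of $\BZ_m$ in $\Aut(P)$ embeds into the product of these two actions, yielding $M_1M_2\ge T_0$. If $M_2\ge T$ then $Z(P)\semidirect\BZ_m$ witnesses (b), a contradiction. If $M_1\ge T$ and $p>T$, then the Heisenberg-type construction inside the non-abelian $p$-group $P$ again witnesses (c), a contradiction.

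The main obstacle is the remaining sub-case $M_2<T$, $p\le T$, where $M_1\ge T_0/T=T(1+T^{2r})$ and $|P|\le T^{2r}$ is bounded. I would decompose $P/Z(P)$ as an $\BF_p\BZ_m$-module into irreducible summands (possible by Maschke since $p\nmid|\BZ_m|$), select an irreducible $V$ of dimension $d\le 2r$ on which $\BZ_m$ acts with image of order $M_1$, and study its preimage $F\le P$. If the commutator pairing $V\wedge V\to Z(P)$ vanishes on $V$, then $F$ is abelian and $\BZ_m$-invariant; the characteristic subgroup $F[p]\supseteq Z(P)$ has $F[p]/Z(P)$ a $\BZ_m$-submodule of the irreducible $V$, so either $F[p]=F$ (making $F$ elementary abelian, whence $F\semidirect\BZ_m$ witnesses (b)) or $F[p]=Z(P)$, in which case the $\BZ_m$-equivariant $p$-th power map identifies $V\cong F/F[p]$ with a submodule of $F^p\le Z(P)$, forcing $M_2\ge M_1>T$, a contradiction. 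If the commutator pairing on $V$ is nonzero, then its image $F'\le Z(P)$ is a nonzero $\BZ_m$-submodule, and every eigenvalue of $\BZ_m$ on $F'$ has the form $\zeta^{p^i+p^j}$ (where $\zeta$ generates the cyclic action on $V$ of order $M_1$) and hence has order at least $M_1/(1+p^{d-1})\ge M_1/(1+T^{2r-1})\ge T$; this forces $M_2\ge T$, again a contradiction. All possibilities in the sub-case are thus eliminated, completing the proof.
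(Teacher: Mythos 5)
Your proof is correct, and its skeleton is the paper's: assume (b) and (c) fail, feed the failure of (c) into \fref{lem:2-16} and then \fref{lem:2-15} (with the identical parameter $t=T^{2r}$) to get bounded-index abelian normal subgroups of the Sylow $p$-subgroups of the nilpotent part for $p\le T$, note that a non-abelian $p$-subgroup with $p>T$ would witness (c) via a two-generated class-two (hence abelian-by-cyclic) subgroup --- the paper states this as ``for $p>T$ any $p$-subgroup of $G$ is abelian'' and you supply the explicit witness --- and finish by taking a core (the paper works with $F(G)$ rather than your $N$, an immaterial difference). Where you genuinely diverge is in disposing of the special-by-cyclic outcome of \fref{thm:Nilpotent-or-Special-by-cyclic}. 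The paper kills it in two lines: if $P$ is not elementary abelian then $p\le T$ (by the failure of (c)), so $|P|\le p^{2r}\le T^{2r}$; and a cyclic group of order coprime to $p$ acting faithfully on $P$ acts faithfully on $P/\Phi(P)$ (Burnside), so its order is less than $|P|\le T^{2r}$. Hence invoking the reduction theorem with parameter just above $T^{2r}$ already forces the nilpotent alternative, with no module theory. You instead inflate the parameter to $T_0=T^2(1+T^{2r})$ and run a case analysis on the orders $M_1,M_2$ of the induced actions on $P/Z(P)$ and $Z(P)$. That analysis is sound as far as I can check: the embedding of the cyclic image into the product of the two restrictions (the stability group of $1\le Z(P)\le P$ is a $p$-group), the $p$-th power map argument when the pairing vanishes (using $P^p\le\Phi(P)=Z(P)$ and irreducibility of $V$), and the eigenvalue computation giving order at least $M_1/(1+p^{d-1})$ on the quotient $F'$ of $\Lambda^2V$ are all correct. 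But your ``main obstacle'' subcase $M_2<T$, $p\le T$, $M_1\ge T(1+T^{2r})$ is in fact \emph{vacuous} by the very same bound the paper uses: the image of $\BZ_m$ on $P/Z(P)$ is a faithful cyclic $p'$-subgroup of $\GL(d,p)$ with $d\le 2r$, so $M_1\le p^{2r}-1\le T^{2r}-1<T(1+T^{2r})$. So your long final paragraph derives a contradiction in a case that cannot occur --- logically harmless, but the paper's one-line observation that the image of $\BZ_m$ in $\Aut(P)$ has order at most $|P|$ would have spared you the entire commutator-pairing analysis and the inflated parameter.
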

\begin{proof}
  Assume that \fref{item:5} does not hold. We observe that for
  $p > T$ this implies that any $p$-subgroup of $G$ is abelian.
  Assume also that \fref{item:4} does not hold.

  We first show using
  \fref{thm:Nilpotent-or-Special-by-cyclic} that in this case $G$ has a nilpotent normal
  subgroup of index bounded in terms of $r$ and $T$.  Let $H$ be a
  subgroup of $G$ which is an extension of a special $p$-group $P$ by
  $\BZ_m$ ($m$ a power of some prime $q$ distinct from $p$). As noted
  above, either $P$ is elementary abelian or it has order at most
  $T^{2r}$. If a cyclic group $\BZ_m$ acts on $P$ then in the first case
  the image of $\BZ_m$ in $\Aut(P)$ has order at most $T$ by our
  assumption (that \fref{item:4} does not hold). In the second case
  the order of the image of $\BZ_m$ in $\Aut(P)$ is at most
  $|P|\le T^{2r}$.
  Now \fref{thm:Nilpotent-or-Special-by-cyclic}
  implies that $G$ has a nilpotent normal subgroup of index at most
  $I\big(r,T^{2r}\big)$.
  In particular this is a bound for the index of $F(G)$ in $G$.

  Now let $G$ be a $p$-group for some prime $p\le T$ for
  which \fref{item:5} does not hold, i.e., assume that all
  abelian-by-cyclic subgroups of $G$ contain an abelian normal
  subgroup of index at most $T$.  If $H$ is any metabelian subgroup of
  $G$, then by \fref{lem:2-16}, $H$ has an abelian normal subgroup of
  index at most $T^{2r}$. Using \fref{lem:2-15} (with $t=T^{2r}$), we see
  that $G$ itself has an abelian normal subgroup of index at most
  $(T^{2r})^{4\log T^{2r}}=T^{16 r^2\log T}$.

  Consider now the general case.
  The Fitting subgroup $F(G)$ is the direct product of its Sylow
  $p$-subgroups. For $p> T$ these are abelian and for $p\le T$ we have
  just proved that they have an abelian normal subgroup of index
  bounded in terms of $r$ and $T$. Hence the same holds for $F(G)$
  itself. But we have already proved that the index of $F(G)$ is
  bounded in terms of $r$ and $T$.  The proof of the corollary is
  complete.
\end{proof}

\section{Preliminaries in topology}
\label{sec:preliminaries}

In this section, we collect a number of basic results from
topology. Though the main theorems in the introduction give bounds on
the complexity of finite group actions on a topological manifold in
terms of the dimension and the integer homology group of the manifold,
we also want to use homology with coefficients in the $p$-element field $\BF_p$,
and cohomology.

The singular homology of a topological space $X$ with integer coefficients is
denoted by $H_*(X;\BZ)$, homology of $X$ with coefficients in the
field $\BF_p$ is denoted by $H_*(X;\BF_p)$.
For a sheaf $\CA$ on $X$, we denote by $H_c^*(X;\CA)$ the sheaf
cohomology of $\CA$ with compact support in the sense of Grothendieck
\cite{grothendieck1957abelianCategories}, see also
\cite[Chapter~II]{bredon2012sheaf}.

For a finite group $G$, the group cohomology of a $G$-module $M$
is denoted by $H^*(G;M)$, see \cite[Section~IV-2.]{borel1960seminar}.

Let $M$ be a $d$-dimensional topological manifold.  For every prime $p$,   
$M$ has a mod~\!$p$ \emph{orientation sheaf}  $\Orientation[M]$, or shortly $\Orientation$,
which is the sheafification of the presheaf which assigns to an open
subset $U\subseteq M$ the relative singular homology group
$H_d(M,M\setminus U;\BF_p)$,
see \cite[Example I-1.11]{bredon2012sheaf}.

Although our main results deal with topological manifolds,
in \fref{sec:Reduction-to-free-action} we need to work with
more general spaces, cohomology manifolds over the field $\BF_p$.
Cohomology manifolds are defined e.g. in
\cite[Definition V-16.7]{bredon2012sheaf},
they are homology manifolds 
by \cite[Definition V-16.8]{bredon2012sheaf}.
The orientation sheaf of a homology manifold over $\BF_p$ is
the sheafification of the presheaf which assigns to an open subset $U$
the Borel-Moore homology of $U$,
see \cite[Definition V-9.1]{bredon2012sheaf}
and \cite[page 293]{bredon2012sheaf}.
Topological manifolds are cohomology manifolds over any field,
and the above definitions of the orientation sheaf are compatible.

We need some general notation for group actions.
\begin{defn}
  Let $G$ be a group acting on a set $X$.
  Denote by $G_x$ the  stabilizer subgroup
  of an element $x\in X$.
  For subgroups $H\le G$, we denote by $X^H$ the
  fixed point subset of $H$.
  We will study the set of all stabilizer subgroups
  $$
  \Stab(G,X) = \left\{G_x\,\big|\,x\in X\right\}.
  $$
\end{defn}

\begin{defn}
  Let $G$ be a group. A \emph{$G$-manifold} is a topological manifold
  equipped with a continuous  action of $G$. The $G$-manifold is \emph{effective}
if the $G$-action is effective.
\end{defn}

Many of our constructions involving the orientation sheaf will be
\emph{functorial} with respect to open embeddings due to the following
simple proposition, which follows from the fact that the definition
of the orientation sheaf is local, and does not involve any choices.

\begin{prop} \label{prop:Orientation-canonical}
  For any open embedding $f\colon N\to M$ of cohomology manifolds over $\BF_p$,
  there is a natural isomorphism
  $$
  \Orientation[N]\stackrel\isom\longrightarrow
  f^*\Orientation[M].
  $$
\end{prop}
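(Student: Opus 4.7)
The plan is to exploit the locality of the orientation sheaf's construction, which involves no choices. First I would observe that, for an open embedding $f\colon N\to M$, the inverse image functor $f^*$ coincides with the restriction functor on sheaves via the canonical identification of $N$ with its open image $f(N)\subseteq M$. Concretely, $(f^*\CF)(V)=\CF(f(V))$ for every open $V\subseteq N$ and every sheaf $\CF$ on $M$, and sheafification commutes with this restriction. So the problem reduces to exhibiting a canonical isomorphism between $\Orientation[N]$ and the restriction $\Orientation[M]\big|_{f(N)}$, both sheaves on $N$.

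Both sheaves are sheafifications of explicitly described presheaves on $N$, and I would compare those presheaves directly. In the Borel--Moore formulation (the case of a general cohomology manifold), the presheaf value on an open $V\subseteq N$ is the top-degree Borel--Moore homology of $V$ with coefficients in $\BF_p$, which is an invariant of $V$ as a topological space alone. Consequently the homeomorphism $f|_V\colon V\to f(V)$ induces a canonical isomorphism between the values of the two presheaves on $V$, manifestly compatible with restriction to smaller opens. This gives an isomorphism of presheaves on $N$, which sheafifies to the desired natural isomorphism.

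In the singular formulation (for a topological manifold), the two presheaves are $V\mapsto H_d(N,N\setminus V;\BF_p)$ and $V\mapsto H_d(M,M\setminus f(V);\BF_p)$, and the inclusion of pairs $(N,N\setminus V)\hookrightarrow(M,M\setminus f(V))$ induces a natural map between them. By excision applied to the closed subset $M\setminus N\subseteq M$, this map is an isomorphism on the cofinal family of opens $V$ whose closure in $M$ is contained in $N$. That is enough to conclude that the induced map on stalks at every $x\in N$ is an isomorphism, hence so is the map on sheafifications.

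The main---but mild---obstacle will be verifying that every step of the above is visibly canonical, and that the two formulations (singular vs.\ Borel--Moore) agree when both apply; this is standard and already implicit in the preceding paragraphs of the paper. Since all the ingredients (excision, the natural map on Borel--Moore homology induced by a homeomorphism, and the sheafification procedure) are functorial under open inclusions and involve no choices of orientation, basepoint, or chart, the naturality of the resulting isomorphism in $f$---in particular, its compatibility with composition of open embeddings---is automatic.
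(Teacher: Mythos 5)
Your proposal is correct and takes essentially the same route as the paper: the paper gives no detailed argument, stating only that the proposition ``follows from the fact that the definition of the orientation sheaf is local, and does not involve any choices,'' and your presheaf-level comparison --- the intrinsic Borel--Moore description in the general case, and excision of $M\setminus N$ on the cofinal family of opens with closure in $N$ in the singular case --- is exactly that observation made explicit. Nothing further is needed.
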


\begin{prop}\label{prop:G-structure on orientaion_sheaf}
  If a finite group $G$ acts continuously on a topological manifold $M$,
  then the action can be lifted canonically to a $G$-action
  on the sheaf $\Orientation[M]$, and on its cohomology $H_c^*(M;\Orientation[M])$.
  Moreover, if the $G$-action on $M$ is free and $f\colon M\to M/G$ denotes
  the quotient map, then $\Orientation[M]\isom f^*\Orientation[M/G]$ canonically.
\end{prop}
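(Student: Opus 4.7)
The plan is to derive both assertions from the canonicity already asserted in \fref{prop:Orientation-canonical}; essentially, the proposition is just a matter of unpacking what ``canonical'' and ``natural'' mean when applied to homeomorphisms coming from a group action.

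For the $G$-action on the sheaf, each $g\in G$ acts on $M$ by a homeomorphism $\phi_g\colon M\to M$, which is in particular an open embedding. Applying \fref{prop:Orientation-canonical} with $N=M$ and $f=\phi_g$ yields a canonical isomorphism $\alpha_g\colon\Orientation[M]\stackrel{\isom}{\to}\phi_g^*\Orientation[M]$, or equivalently a sheaf automorphism covering $\phi_g$. The essential point is that the naturality of this isomorphism with respect to composition of open embeddings implies the cocycle identity $\alpha_{gh}=\phi_h^*(\alpha_g)\circ\alpha_h$ and $\alpha_e=\mathrm{id}$, which is exactly the data of a $G$-equivariant structure on $\Orientation[M]$. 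Since every $\phi_g$ is a homeomorphism, hence proper, functoriality of compactly supported sheaf cohomology then transports this equivariant structure to a $G$-action on $H_c^*(M;\Orientation[M])$.

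For the second assertion, I would use the fact that a free action of a finite group $G$ makes the quotient map $f\colon M\to M/G$ a covering map. Cover $M/G$ by open sets $V$ over each of which $f$ is trivialized, so every connected component $U$ of $f^{-1}(V)$ maps homeomorphically onto $V$. On each such $U$, both the inclusion $U\hookrightarrow M$ and the restriction $f|_U\colon U\to V\hookrightarrow M/G$ are open embeddings, so \fref{prop:Orientation-canonical} supplies canonical isomorphisms
\[
\Orientation[M]\big|_U\;\isom\;\Orientation[U]\;\isom\;(f|_U)^*\Orientation[M/G]\;\isom\;(f^*\Orientation[M/G])\big|_U.
\]
Naturality in \fref{prop:Orientation-canonical} forces these local isomorphisms to agree on overlaps (on any open refinement inside the intersections, both sides arise from the same open embeddings into $M$ and into $M/G$), so they glue to a single global canonical isomorphism $\Orientation[M]\isom f^*\Orientation[M/G]$.

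The only real technical content is the verification of the cocycle identity in the first part and of the gluing compatibility in the second part; I expect this to be the main (and essentially only) obstacle. Both statements reduce to the coherence of the isomorphism in \fref{prop:Orientation-canonical} with respect to composition of open embeddings, and this coherence is precisely what is encoded in calling that isomorphism \emph{natural}; no further input beyond the local definition of $\Orientation[M]$ is needed.
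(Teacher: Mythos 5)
Your proposal is correct and follows exactly the paper's route: the paper's entire proof of this proposition reads ``Follows immediately from \fref{prop:Orientation-canonical}'', and your argument is precisely a careful unpacking of that one line. The cocycle identity for the automorphisms $\alpha_g$ and the gluing of the local isomorphisms over a trivializing cover of the covering map $f\colon M\to M/G$ are the details the paper leaves implicit, and you have supplied them correctly via the coherence of the natural isomorphism under composition of open embeddings.
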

\begin{proof}
	Follows immediately from \fref{prop:Orientation-canonical}.
\end{proof}

\begin{prop}  [Poincar\'e duality]
  \label{prop:generalised-Poincare-duality}
  Let $M$ be a $d$-dimensional topological manifold.
  For all $0\le i\le d$, there are canonical isomorphisms
  $$
  \Delta_M\colon H_c^i(M;\Orientation)\stackrel\isom\longrightarrow
  H_{d-i}(M;\BF_p).
  $$
  Moreover, every open embedding
  $f\colon N\hookrightarrow M$ 
 induces a commutative diagram

\begin{equation}
  \label{eq:1}
  \xymatrix@C40pt{
    H_c^i(N;\Orientation[N]) \ar[r] \ar[d]^{\Delta_N}
    &H_c^i(M;\Orientation[M]) \ar[d]^{\Delta_M}
    \\
    H_{d-i}(N;\BF_p) \ar[r]
    &H_{d-i}(M;\BF_p)\quad .
  }
\end{equation}

\end{prop}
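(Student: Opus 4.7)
The statement is a form of Poincar\'e--Lefschetz duality for topological manifolds, with coefficients in $\BF_p$, twisted by the (non-constant) orientation sheaf, together with its naturality under open inclusions. My plan is to \emph{invoke} the classical sheaf-theoretic Poincar\'e duality theorem of Bredon rather than reproving it from scratch, and then to verify the naturality square by chasing the construction.

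First I would quote the main duality theorem from \cite{bredon2012sheaf}, Chapter V: for any $d$-dimensional cohomology manifold $M$ over $\BF_p$ --- and every topological $d$-manifold is such --- cap product with a sheaf-theoretic fundamental class gives a canonical isomorphism
$$
\Delta_M\colon H_c^i(M;\Orientation)\stackrel{\isom}\longrightarrow \overline H_{d-i}(M;\BF_p),
$$
where $\overline H_*$ denotes Bredon's sheaf-theoretic homology with compact supports. For a topological manifold this target is canonically identified with singular homology, since topological manifolds are locally contractible ANRs, so the standard comparison theorem between sheaf-theoretic and singular homology applies. Composing, one gets the desired $\Delta_M$.

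Next I would verify the commutative diagram \fref{eq:1}. The top horizontal arrow is the natural extension-by-zero map in compactly supported cohomology: using the canonical identification $\Orientation[N]\isom f^{*}\Orientation[M]$ from \fref{prop:Orientation-canonical}, together with the counit $f_!f^{*}\Orientation[M]\to \Orientation[M]$ of the adjunction $f_!\dashv f^{*}$ for the open embedding $f$, one obtains a canonical morphism
$$
H_c^{i}(N;\Orientation[N])\isom H_c^{i}(N;f^{*}\Orientation[M])\isom H_c^{i}(M;f_!f^{*}\Orientation[M])\longrightarrow H_c^{i}(M;\Orientation[M]).
$$
The bottom horizontal arrow is the standard push-forward in singular homology induced by the inclusion. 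The square commutes because the construction of $\Delta$ is entirely local: the fundamental class of $M$ restricts, along any open embedding, to the fundamental class of the source, and cap product is functorial under restriction of the coefficient sheaf. Thus the local model of $\Delta_M$ on an open chart in $N$ is literally $\Delta_N$ on that chart, and the compatibility propagates globally by a Mayer--Vietoris / hypercover patching argument (already embedded in Bredon's derivation of duality from the local case $M=\BR^d$).

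The only real obstacle is bookkeeping: (i) reconciling Bredon's compact-support sheaf-theoretic homology with singular homology, which is classical for ANRs; and (ii) checking that the extension-by-zero map on $H_c^{*}$ corresponds, under $\Delta$, to the singular push-forward induced by an inclusion. Both points reduce to the local compatibility of cap product with the presheaf definition of $\Orientation$ recorded in \fref{prop:Orientation-canonical}, which is essentially immediate from the definitions.
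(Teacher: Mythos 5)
Your proposal is correct and takes essentially the same route as the paper's own proof: both invoke Bredon's sheaf-theoretic duality theorem \cite[Theorem V-9.2]{bredon2012sheaf}, identify Borel--Moore homology with compact supports with singular homology for topological manifolds, and derive the naturality square from extension-by-zero of coefficients --- your $f_!f^*\Orientation[M]\to\Orientation[M]$ is precisely the paper's subsheaf $\Orientation[M]\otimes\scrA'\le\Orientation[M]\otimes\scrA$, where $\scrA$ is the constant sheaf with $\BF_p$ coefficients. The only cosmetic difference is that the paper observes the commutativity of \fref{eq:1} is already built into Bredon's construction (naturality in the coefficient sheaf), so your concluding Mayer--Vietoris patching step is not needed as a separate argument.
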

\begin{proof}
  Let $\scrA$ be the constant sheaf on $M$  with $\BF_p$ coefficients,
  then $\Orientation[M]\otimes\scrA\isom\Orientation[M]$,
  and our $\Delta_M$ is constructed in
  \cite[Theorem V-9.2]{bredon2012sheaf}.
  Moreover, let $\scrA'\le\scrA$ be the subsheaf which is equal to
  $\scrA$ on the image of $f$ and $0$ outside.
  \fref{prop:Orientation-canonical} implies that
  $H_c^i(N;\Orientation[N])\isom H_c^i(M;\Orientation[M]\otimes\scrA')$,
  hence the diagram \fref{eq:1} is also constructed in
  \\ \cite[Theorem~V-9.2]{bredon2012sheaf}.
  We remark that Borel-Moore
  homology with compact supports, which appears in the cited theorem
  of \cite{bredon2012sheaf},  coincides with singular homology in the
  case of topological manifolds.
\end{proof}

\begin{prop} \label{prop:Hd-cohomology-manifold}
  Let $M$ be a $d$-dimensional cohomology manifold.
  Then $H_c^d(M;\Orientation)$ is isomorphic to the free
  $\BF_p$-module on the connected components of $M$,
  and this isomorphism is a natural transformation for open embeddings.
  In particular,
  $\dim H_c^d(M;\Orientation)$ is the cardinality of the 
  connected components of $M$.
\end{prop}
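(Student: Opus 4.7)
The plan is to first reduce to the case of a connected cohomology manifold, then compute $H_c^d(M;\Orientation)$ in that case, and finally verify naturality under open embeddings. Because cohomology $d$-manifolds are locally connected, the connected components $\{M_\alpha\}$ of $M$ are open, and $M=\bigsqcup_\alpha M_\alpha$ is a disjoint union of open subspaces. Compactly supported sheaf cohomology is additive over such disjoint unions, and \fref{prop:Orientation-canonical} identifies the restriction of $\Orientation[M]$ to $M_\alpha$ with $\Orientation[M_\alpha]$, producing a natural isomorphism
$$
H_c^d(M;\Orientation)\isom\bigoplus_\alpha H_c^d(M_\alpha;\Orientation[M_\alpha])
$$
which is manifestly compatible with restriction along open embeddings.

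For a connected cohomology $d$-manifold $M_\alpha$, I plan to prove $H_c^d(M_\alpha;\Orientation)\isom\BF_p$ by invoking the cohomology-manifold version of Poincar\'e--Lefschetz duality (the analog of \fref{prop:generalised-Poincare-duality} with Borel--Moore homology on the right-hand side, as provided by Bredon's Theorem~V-9.2), which yields a natural isomorphism $H_c^d(M_\alpha;\Orientation)\isom\bar H_0(M_\alpha;\BF_p)$. It remains to identify $\bar H_0$ of a connected cohomology manifold with $\BF_p$. This follows from the local characterization of cohomology manifolds, by which each point has a cofinal system of neighborhoods on which $\Orientation$ is trivializable and carries a one-dimensional top compactly supported cohomology class, combined with a Mayer--Vietoris / gluing argument using connectedness to see that any two such local classes agree up to a scalar.

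For naturality with respect to an open embedding $f\colon N\hookrightarrow M$, each connected component $N_\beta$ of $N$ maps into a unique connected component $M_{\alpha(\beta)}$ of $M$, so the induced map on $H_c^d$ decomposes as a direct sum of maps $H_c^d(N_\beta;\Orientation)\to H_c^d(M_{\alpha(\beta)};\Orientation)$, each a map of one-dimensional $\BF_p$-spaces. The main obstacle, as I see it, is to verify that every such component map is nonzero, equivalently that it realizes the intended map of free modules sending $[N_\beta]$ to $[M_{\alpha(\beta)}]$. I would address this by picking a point $x\in N_\beta$ together with a sufficiently small open neighborhood $V\subseteq N_\beta$ on which $\Orientation$ is trivializable, and factoring the map as
$$
H_c^d(V;\Orientation)\longrightarrow H_c^d(N_\beta;\Orientation)\longrightarrow H_c^d(M_{\alpha(\beta)};\Orientation),
$$
whose composite coincides with the map induced by the open embedding $V\hookrightarrow M_{\alpha(\beta)}$. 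By the local defining property of cohomology manifolds, a generator of $H_c^d(V;\Orientation)$ represents a nonzero class in the local cohomology $H^d_{\{x\}}(M_{\alpha(\beta)};\Orientation)\isom\Orientation_x$, so its image in $H_c^d(M_{\alpha(\beta)};\Orientation)$ must be nonzero. This forces the second arrow in the factorization to be nonzero, and hence an isomorphism, which completes the naturality check.
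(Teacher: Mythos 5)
Your skeleton is essentially the paper's: the paper also applies Poincar\'e duality (\cite[Theorem V-9.2]{bredon2012sheaf}) to convert $H_c^d(M;\Orientation)$ into degree-zero Borel--Moore homology with compact supports, and then simply cites \cite[V-5.14]{bredon2012sheaf}, which already contains both the identification of $H_0^c(M;\BF_p)$ with the free $\BF_p$-module on the connected components and its naturality. Where you diverge is in re-deriving that citation by hand, and there your nonvanishing step has a genuine flaw: you assert that a generator of $H_c^d(V;\Orientation)$ ``represents a nonzero class in the local cohomology $H^d_{\{x\}}(M_{\alpha(\beta)};\Orientation)\isom\Orientation_x$, so its image in $H_c^d(M_{\alpha(\beta)};\Orientation)$ must be nonzero.'' The support-enlargement maps go the other way: since $\{x\}$ is a compact subset of $V$, the natural maps are
$H^d_{\{x\}}(M_{\alpha(\beta)};\Orientation)\to H_c^d(V;\Orientation)\to H_c^d(M_{\alpha(\beta)};\Orientation)$,
and there is no natural homomorphism from compactly supported cohomology back into local cohomology to serve as a detector. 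Even after reversing the arrows (exhibiting your local generator as the image of a generator of $H^d_{\{x\}}$), the local axiom of a cohomology manifold by itself does not prevent that class from dying in $H_c^d(M_{\alpha(\beta)};\Orientation)$; its survival is precisely the global statement at stake and requires a global input, e.g.\ the augmentation $H_0^c(M_{\alpha(\beta)};\BF_p)\to\BF_p$ on the Borel--Moore side, which is nonzero on the point class dual to your local generator, or the chain-of-overlapping-neighborhoods connectedness argument underlying Bredon's V-5.14.

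Relatedly, your step identifying $H_0^c$ of a connected cohomology manifold with $\BF_p$ by a ``Mayer--Vietoris / gluing'' of local classes is itself only a sketch of the proof of V-5.14; note that if you carry it out in the strong form (every local class maps to one and the same generator), then the nonvanishing of your component maps follows at once and the flawed local-cohomology detection becomes unnecessary. So the repair is either to cite \cite[V-5.14]{bredon2012sheaf} as the paper does, or to replace your detection step by the augmentation argument on the dual side. Your preliminary reduction via open connected components, additivity of $H_c^*$, and \fref{prop:Orientation-canonical} is correct, but it is subsumed by that citation.
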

\begin{proof}
By Poincar\'e duality \cite[V-9.2]{bredon2012sheaf}, $H_c^d(M,
\Orientation)$ is isomorphic to the Borel-Moore homology $H_0^c(M,
\BF_p)$ with compact support. Then our statement is a special case of
\cite[V-5.14]{bredon2012sheaf}.
\end{proof}

Smith theory describes the fixed point set of finite $p$-groups
acting continuously on topological manifolds.
In particular, these are disjoint unions of cohomology manifolds.

\begin{prop} \label{prop:fixed-point-set-is-manifold}
  Let $G$ be a finite $p$-group
  acting continuously and effectively on a topological manifold $M$.
  Then each connected component $F$ of $M^G$ is
  a cohomology manifold over $\BF_p$,
  and we have an isomorphism
  \[
    \Orientation[M]\big|_F\isom\Orientation[F]
  \]
  which is a natural transformation with respect to
  $G$-equivariant open embeddings.
  If $M$ is connected and $|G|>2$ then
  $M\setminus M^G$ is connected.
  
  Furthermore,  we have the inequalities
  \begin{eqnarray*}
    \dim H_c^*\big(\Fix{M}{G};\Orientation\big)
    &\le&
      \dim H_c^*(M;\Orientation),
    \\
    \dim H_c^*\big(M\setminus\Fix{M}{G};\Orientation\big)
    &\le&
      2\,\dim H_c^*(M;\Orientation).
  \end{eqnarray*}
\end{prop}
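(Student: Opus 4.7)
My plan is to invoke the classical Smith theory of $p$-group actions on cohomology manifolds, in the form developed in Bredon's textbook, and to do an induction on $|G|$ to reduce everything to the case $G=\BZ_p$.

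First I would handle the base case $G=\BZ_p$. Here the statement that each connected component $F$ of $M^G$ is a cohomology manifold over $\BF_p$ is the classical Floyd--Smith theorem. The identification $\Orientation[M]|_F\isom\Orientation[F]$ comes from the fact that both sides are defined stalkwise through Borel--Moore homology, and Smith theory identifies the top-degree local homology of $M$ along a fixed stratum with that of $F$. Naturality with respect to $G$-equivariant open embeddings is automatic from the functoriality of the orientation sheaf (\fref{prop:Orientation-canonical}) combined with the local nature of the Smith construction. The first dimension inequality is exactly the Smith inequality for $\BF_p$-cohomology, transported through Poincar\'e duality (\fref{prop:generalised-Poincare-duality}). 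For the second, I would invoke the long exact sequence of the pair $(M,M^G)$ in compactly supported cohomology with coefficients in $\Orientation[M]$, which, after identifying $\Orientation[M]|_{M\setminus M^G}$ with $\Orientation[M\setminus M^G]$ via \fref{prop:Orientation-canonical} and $\Orientation[M]|_{M^G}$ with $\Orientation[M^G]$ via the already-established base case, yields
\[
\dim H_c^*\bigl(M\setminus\Fix{M}{G};\Orientation\bigr)
\le \dim H_c^*\bigl(\Fix{M}{G};\Orientation\bigr)+\dim H_c^*(M;\Orientation)
\le 2\dim H_c^*(M;\Orientation).
\]

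For the inductive step, I would pick a central subgroup $H\le Z(G)$ of order $p$ (which exists in any nontrivial finite $p$-group). Since $H$ is central, the subspace $M^H$ is $G$-invariant, the base case applies to the $H$-action on $M$, and $G/H$ acts on each component of $M^H$. Because $M^G=(M^H)^{G/H}$, applying the induction hypothesis to $G/H\curvearrowright M^H$ component by component yields all the structural statements and propagates the dimension inequalities for $G\curvearrowright M$ (the bounds do not worsen because at each step the bounding space is replaced by a smaller one).

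The step I expect to be the main obstacle is the connectedness of $M\setminus M^G$ when $M$ is connected and $|G|>2$. It suffices to show that each component of $M^G$ has codimension $\ge 2$ in $M$, since removing a locally closed subset of codimension $\ge 2$ from a connected topological manifold leaves it connected. For $p$ odd, this is immediate from the classical fact that the fixed set of any $\BZ_p$-action on a cohomology manifold over $\BF_p$ has even codimension, hence codimension $\ge 2$ wherever it is a proper subset. For $p=2$ and $|G|\ge 4$, the delicate point is to rule out a codimension-$1$ component: I would argue via the local Smith-theoretic description at such a point that the $G$-action would induce an effective action on the ``normal coefficient line'' $\BF_2$ fixing the origin, which must factor through $\BZ_2$, contradicting $|G|>2$.
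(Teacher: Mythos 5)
Your overall route---classical Smith theory for $G=\BZ_p$, induction through a central subgroup of order $p$, and the second inequality extracted from the long exact sequence of the pair $(M,M^G)$ in compactly supported cohomology---is sound, and it is essentially what underlies the paper's own proof, which handles everything except the connectedness claim by citing \cite[Proposition~2.5]{CMPS}. Two technical points to watch: after the first inductive step $M^H$ is no longer a topological manifold but a disjoint union of cohomology manifolds over $\BF_p$ of possibly varying dimensions, so the induction hypothesis must be formulated for cohomology manifolds from the start; and the passage from ``codimension $\ge 2$'' to ``complement connected'' must be made via cohomological dimension ($\dim_{\BZ}M^G\le\dim(M)-2$), not naive topological codimension, since fixed-point sets can be wildly embedded. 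This is exactly how the paper argues, citing \cite[Theorem~V/2.6]{borel1960seminar} for the dimension bound and \cite[Corollary~I/4.7]{borel1960seminar} for connectedness of the complement.

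The genuine gap is precisely where you predicted it, and your proposed fix does not work as stated. For $p=2$ and $|G|>2$ you want to rule out a codimension-one component $F$ of $M^G$ by producing an ``effective action on the normal coefficient line $\BF_2$.'' But there is no information there to exploit: the automorphism group of a one-dimensional $\BF_2$-vector space (equivalently, of a stalk of the mod-$2$ orientation sheaf) is trivial, so every group acts trivially on it and no contradiction with $|G|>2$ can be extracted. The structure that actually carries the needed information is the \emph{integral} local orientation: for $x\in F$, the $G$-action on $H_d(M,M\setminus\{x\};\BZ)\isom\BZ$ gives a character $\epsilon\colon G\to\{\pm1\}$; since $|G|>2$, the kernel of $\epsilon$ is nontrivial and, being a $2$-group, contains an involution $h$ which preserves the local orientation at $x$ yet fixes the codimension-one set $F$ through $x$. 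This contradicts the theorem that the fixed-point components of a locally orientation-preserving involution on a mod-$2$ cohomology manifold have even codimension---and that parity statement, rather than anything about $\BF_2$-coefficients, is the actual substance of \cite[Theorem~V/2.6]{borel1960seminar}, which the paper invokes wholesale. Until this step is repaired along those lines (or simply replaced by the citation), the connectedness claim for $2$-groups such as $\BZ_4$ and $\BZ_2\times\BZ_2$ remains unproved in your write-up; the rest of your argument goes through.
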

\begin{proof}
  Most of this is contained in \cite[Proposition~2.5]{CMPS}.
  If $M$ is connected and $|G|>2$ then $\dim_\BZ M\le \dim(M)-2$
  by  \cite[Theorem~V/2.6]{borel1960seminar} and
  $M\setminus M^G$ is connected by \cite[Corollary~I/4.7]{borel1960seminar}.
\end{proof}

For an elementary abelian $p$-group acting on a manifold,
Borel's Fixed Point Formula \cite[Theorem XIII-4.3]{borel1960seminar}
helps us to understand the local behavior of stabilizer subgroups and
their fixed point submanifolds.

\begin{prop} [Borel]
	\label{prop:Borel-fixpoint-formula}
	Let $M$ be  a topological $d$-manifold, and
	let $G$ be an elementary abelian $p$-group
	acting continuously and effectively on $M$.
	Let $x\in \Fix{M}{G}$ be a fixed point of $G$.
	For a subgroup $H\le G$, denote by  $d(H)$ the dimension
	of the connected component of $x$ in $M^H$.
	Then
	$$
	d-d(G) = {\sum}_H \big( d(H) - d(G) \big),
	$$
	where $H$ runs through the subgroups of $G$ of index $p$.
\end{prop}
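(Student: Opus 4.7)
The plan is to follow Borel's classical proof (Seminar on Transformation Groups, Chapter~XIII, Theorem~4.3), adapted to the topological manifold setting. The guiding idea is that, although $M$ is only topological and so carries no true tangent representation at $x$, the numerical identity is the cohomological shadow of a character-theoretic identity for representations of elementary abelian $p$-groups.

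First I would localize: pick a $G$-invariant open neighborhood $U$ of $x$ so small that $U\cap M^H$ equals the connected component of $x$ in $M^H$ for every subgroup $H\le G$. This is possible since $M$ is locally connected and $G$ is finite. By \fref{prop:fixed-point-set-is-manifold}, each such component is a cohomology manifold over $\BF_p$ of dimension $d(H)$, so Smith-theoretic and equivariant cohomological tools apply uniformly to all of them.

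The formula is modeled on the following linear-algebraic identity. For an $\BF_p[G]$-module $V$ decomposed into character eigenspaces $V=\bigoplus_\chi V_\chi$, one has
$$
\dim V-\dim V^G=\sum_{\chi\ne 0}\dim V_\chi,\qquad
\dim V^H-\dim V^G=\sum_{\substack{\chi\ne 0\\ \chi|_H=0}}\dim V_\chi,
$$
and summing the second identity over index-$p$ subgroups $H$ reproduces the first, since $\chi\mapsto\ker\chi$ bijects nonzero characters with index-$p$ subgroups. To transport this into topology I would invoke the Borel--Hsiang localization theorem for equivariant cohomology of elementary abelian $p$-groups: after inverting appropriate elements of $H^*(BG;\BF_p)$, the restriction from the equivariant cohomology of $U$ to that of $F_G$ becomes an isomorphism, and intermediate localizations single out each $F_H$ as an eigenspace-type summand of the equivariant cohomology of $U$. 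The codimensions $d-d(H)$ then appear as the degrees at which these localizations take effect, and Borel's formula becomes the equality of two ways of enumerating them.

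The main obstacle is organizing this cohomological bookkeeping rigorously without appealing to a smooth tangent decomposition. A possible alternative is induction on $\rank(G)$: the base case $r=1$ is tautological, and the inductive step applies Smith theory to the $\BZ_p$-action of $G/K$ on $M^K$ for some index-$p$ subgroup $K\le G$. However, reconciling the contributions from the many index-$p$ subgroups in a clean recursion appears to still require the equivariant cohomology machinery described above.
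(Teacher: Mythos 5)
The first thing to say is that the paper does not prove this proposition at all: it is quoted verbatim from \cite[Theorem~XIII-4.3]{borel1960seminar}, where Borel proves it for first-countable cohomology manifolds over $\BZ_p$ (which covers topological manifolds, cf.\ the discussion preceding \fref{prop:fixed-point-set-is-manifold}). So you identified exactly the right source, and for the paper's purposes a citation suffices. As a standalone proof attempt, however, your text is a plan rather than a proof, and by your own admission its central step --- ``organizing this cohomological bookkeeping rigorously'' --- is left open. That is the main gap, and it is not a small one: it is precisely the content of Borel's Chapter~XIII.

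Two concrete points where the sketch as written would fail. First, the ``model identity'' is vacuous over $\BF_p$: since $|\BF_p^\times|=p-1$ is prime to $p$, every homomorphism $G\to\BF_p^\times$ is trivial, so an $\BF_p[G]$-module admits no eigenspace decomposition (the group algebra $\BF_p[G]$ is local and the trivial module is the only irreducible one). The character calculus you describe is the one valid for the isotropy representation of a \emph{smooth} action over $\BR$ or $\BC$ --- exactly the structure that is unavailable in the topological category. Borel's substitute is a system of ``local weights'' extracted from iterated Smith theory, and constructing these is the real work; the Borel--Hsiang localization theorem by itself tells you that restriction to $\Fix{M}{G}$ becomes an isomorphism after inverting suitable classes, but it does not hand you the codimensions $d-d(H)$ as ``degrees at which localizations take effect'' without those local computations. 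Second, a minor but genuine slip: no neighborhood $U$ of $x$ can satisfy ``$U\cap M^H$ equals the connected component of $x$ in $M^H$'' unless that whole component lies in $U$; what saves the localization step is merely that dimension is a local invariant, so only the local dimension of $M^H$ at $x$ enters the formula, and this should be said instead. Your fallback suggestion --- induction on $\rank(G)$, applying Smith theory to the $\BZ_p$-action of $G/K$ on $M^K$ --- is in fact closer to Borel's actual argument than the localization route, but the reconciliation of the contributions of the $(p^r-1)/(p-1)$ index-$p$ subgroups, which you flag as unresolved, is the key lemma of the induction and is missing here.
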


For a $p$-group acting on a manifold,
Ignasi Mundet i Riera and the authors investigated
the global behavior of stabilizer subgroups and
their fixed point submanifolds.
Their main result is the following.

\begin{prop} [{\cite[Theorem~5.1]{CMPS}}]
  \label{prop:number-of-stapilizers-is-bounded}
  For all integers $d,B\ge0$, there is a bound $\tilde C=\tilde C(d,B)$
  with the following property.\\
  Let $M$ be a topological $d$-manifold such that
  $\dim H_c^*(M;\Orientation)\le B$.
  Then each finite $p$-group $G$ acting continuously on $M$
  has a characteristic subgroup $H\le G$ of index at most $\tilde C$
  such that
  $$
  \big|\Stab(H,M)\big|\le\tilde C.
  $$  
\end{prop}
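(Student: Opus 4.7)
I would deduce the result in three stages: a rank bound on $G$, a uniform cohomological control on all fixed-point loci, and a characteristic-subgroup construction.

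For the rank bound, I would extend the classical Mann--Su theorem to the non-compact setting: if $M$ is a topological $d$-manifold with $\dim H_c^*(M;\Orientation)\le B$, then every elementary abelian $p$-subgroup $E\le G$ acting faithfully on $M$ has $\rank(E)\le d_0(d,B)$. The proof would proceed by applying Smith theory (Proposition~\ref{prop:fixed-point-set-is-manifold}) inductively to the maximal chain of subgroups of $E$, pairing the resulting cohomological estimates with Borel's Fixed Point Formula (Proposition~\ref{prop:Borel-fixpoint-formula}) to extract dimension constraints. Combined with Lemma~\ref{lem:rank-and-elementary-abelian-subgroups}, this yields a rank bound $r=r(d,B)$ for $G$ itself.

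For the cohomological control of fixed-point sets, iterated Smith theory along a composition series $1=K_0\triangleleft K_1\triangleleft\cdots\triangleleft K_s=K$ of any subgroup $K\le G$ yields $\dim H_c^*(M^K;\Orientation)\le B$; then Proposition~\ref{prop:Hd-cohomology-manifold} bounds the number of connected components of $M^K$ by $B$, uniformly in $K$.

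The main obstacle is the third stage, where the local and global data must be combined. A subgroup $K\le G$ appears in $\Stab(G,M)$ precisely when some connected component of $M^K$ is not contained in $\bigcup_{K'>K}M^{K'}$. Borel's formula constrains, at any fixed point of an elementary abelian $E\le G$, which subgroups of $E$ can be stabilizers in a neighborhood, bounding this count in terms of $d$. Globally, I would use a descending induction on the poset of stabilizer conjugacy classes: let $G$ act by conjugation on the finite set $\mathcal{M}$ of maximal stabilizer subgroups (finite because each conjugacy class is pinned down by a choice of connected component of some $M^K$, and both the set of conjugacy classes of subgroups and the number of components per subgroup are bounded in terms of $r$ and $B$), replace $G$ by the kernel of this action -- a characteristic subgroup of bounded index -- and recurse inside each maximal stabilizer. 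Termination is ensured because subgroup chains in a finite group of rank $r$ have length bounded in terms of $r$; the final characteristic $H$ has index and $|\Stab(H,M)|$ both bounded by $\tilde C=\tilde C(d,B)$.
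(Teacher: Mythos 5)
First, a point of comparison that matters for this review: the paper does not prove this proposition at all --- it is imported verbatim from \cite[Theorem~5.1]{CMPS} --- so there is no internal proof to measure your attempt against, and it must be judged on its own merits. On those merits, the decisive gap is in your third stage. You justify the finiteness of the set $\mathcal{M}$ of maximal stabilizers by claiming that ``the set of conjugacy classes of subgroups'' of $G$ is ``bounded in terms of $r$ and $B$''. This is false for $p$-groups of bounded rank: $\BZ_{p^n}\times\BZ_{p^n}$ has rank $2$, all of its subgroups are normal, and the number of them grows without bound as $n$ and $p$ grow. Bounded rank bounds neither the number of subgroups nor the number of their conjugacy classes, and no purely group-theoretic count of this kind can work, since $\tilde C(d,B)$ must be independent of $p$ and $|G|$. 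Your injectivity observation is correct --- if $G_x=K$, $G_{x'}=K'$, and the component of $x$ in $M^{K}$ coincides with the component of $x'$ in $M^{K'}$, then $K'\le G_x=K$ and symmetrically, so $K=K'$ --- but it injects $\Stab(G,M)$ into the set of components of $M^{K}$ as $K$ ranges over \emph{all} subgroups, and your Stage 2 only controls the number of components ($\le B$) for each \emph{fixed} $K$. So a bound on $|\Stab(G,M)|$ in terms of $d$ and $B$ is exactly what has to be proved; the proposal assumes it at the point where the topology would have to do the work. The termination claim is also false: subgroup chains in a finite group of rank $r$ do not have length bounded in terms of $r$ (the chain $1<\BZ_p<\BZ_{p^2}<\cdots<\BZ_{p^n}$ has rank $1$ and length $n$; in general the length is $\log_p|G|$), so the proposed recursion has no bounded depth and its index losses cannot be controlled.

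Two further problems deserve mention. The kernel of the conjugation action of $G$ on $\mathcal{M}$ is normal but not obviously \emph{characteristic}: $\mathcal{M}$ is defined through the action on $M$, and an abstract automorphism of $G$ need not preserve it; to get a characteristic subgroup of bounded index one should instead pass to the intersection of all subgroups of index at most the relevant bound, which has bounded index since $G$ has rank $r$ --- the device the paper itself uses, via \cite[Cor.~1.1.2]{Lubotzky_Segal}, in the proof of \fref{thm:2-8}. And in Stage 1, Borel's formula (\fref{prop:Borel-fixpoint-formula}) is vacuous when there are no fixed points, so it cannot handle the free-action case, which is precisely the hard part of a Mann--Su type rank bound when the hypothesis is only $\dim H_c^*(M;\Orientation)\le B$ rather than finitely generated integral homology; your sketch is silent on it. (Stage 2, by contrast, is fine and simpler than you make it: \fref{prop:fixed-point-set-is-manifold} applies to the whole $p$-group $K$ in one step, with no composition series, and \fref{prop:Hd-cohomology-manifold} then bounds the number of components.)
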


An old result of Mann-Su
\cite{Mann_Su} gives an upper bound on the rank
of an elementary abelian $p$-group $G$ acting on a compact
topological manifold $M$.
Ignasi Mundet i Riera and the authors extended this
to not necessarily compact manifolds and to arbitrary finite groups $G$.
\begin{thm}[{\cite[Theorem~1.8]{CMPS}}]\label{thm:Mann-Su_v0} 
  Let $G$ be  a finite group  acting continuously
  and effectively on a topological manifold $M$ such that
  $H_*(M;\BZ)$ is finitely generated.
  Then the rank of $G$ 
  is bounded in terms of $\dim(M)$ and $
  H_*(M;\BZ)$.
\end{thm}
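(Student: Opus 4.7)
Plan. The strategy is to reduce to elementary abelian $p$-subgroups via the group-theoretic tools from \fref{sec:jordan-type-theorems}, bound the relevant mod-$p$ cohomology by Poincar\'e duality, and then convert the stabilizer information provided by \fref{prop:number-of-stapilizers-is-bounded} into a rank bound using Smith-Borel methods.

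By \fref{lem:rank-and-elementary-abelian-subgroups}, it suffices to produce a single bound $d=d(\dim M, H_*(M;\BZ))$, valid uniformly over all primes $p$, on the rank of every elementary abelian $p$-subgroup of $G$ (the final bound on $\rank(G)$ is then quadratic in $d$). Since $H_*(M;\BZ)$ is finitely generated, the universal coefficient theorem gives a $p$-uniform bound on $\dim_{\BF_p}H_*(M;\BF_p)$, and then \fref{prop:generalised-Poincare-duality} transfers this to a bound $B=B(\dim M, H_*(M;\BZ))$ on $\dim_{\BF_p}H_c^*(M;\Orientation)$ for every $p$.

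Now fix an elementary abelian $p$-subgroup $E\le G$. By \fref{prop:number-of-stapilizers-is-bounded} there is a characteristic subgroup $H\le E$ of index at most $\tilde C(\dim M,B)$ with $\big|\Stab(H,M)\big|\le\tilde C$; since $[E:H]$ is bounded, it suffices to bound $\rank(H)$. For each nontrivial stabilizer $H_x\in\Stab(H,M)$, iterate \fref{prop:Borel-fixpoint-formula} at $x$: starting from $K_0=H_x$, the formula always produces (unless $d(K_j)=\dim M$ already) an index-$p$ subgroup $K_{j+1}<K_j$ with $d(K_{j+1})>d(K_j)$; combined with the connectedness statement in \fref{prop:fixed-point-set-is-manifold} and effectiveness of the action, the chain terminates at $\{1\}$ after at most $\dim M$ steps, yielding $\rank(H_x)\le\dim M$.

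The remaining task, and the main obstacle, is to convert these pointwise bounds on stabilizers into a bound on $\rank(H)$ itself: the naive embedding $H\hookrightarrow\prod_i H/H_{x_i}$ afforded by $\bigcap_i H_{x_i}=\{1\}$ gives no upper bound on $\rank(H)$. I would carry this out via the Borel-Hsiang localization method: the Leray spectral sequence of the Borel construction $EH\times_H M\to BH$ converges to equivariant compactly-supported cohomology (with coefficients in the orientation sheaf), which is a finitely generated module of rank controlled by $B$ over the polynomial ring $H^*(BH;\BF_p)$ with $\rank(H)$ generators. The localization theorem identifies the localized module with contributions coming from fixed loci $M^K$ of cyclic $K\le H$; using \fref{prop:fixed-point-set-is-manifold} to control the cohomology of these loci, the bound $\big|\Stab(H,M)\big|\le\tilde C$ to control how many such loci enter, and the per-stabilizer bound $\rank(H_x)\le\dim M$ from the previous paragraph, a dimension count forces $\rank(H)\le d(\dim M, H_*(M;\BZ))$. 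The delicate point is that the standard equivariant localization theorem is usually stated for compact $G$-spaces; one has to replace it with a sheaf-theoretic version, phrased in terms of $H_c^*(\,\cdot\,;\Orientation)$ and compatible with \fref{prop:generalised-Poincare-duality}, so that the argument applies to non-compact manifolds with merely finitely generated integral homology.
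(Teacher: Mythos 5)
The paper does not actually prove this statement: \fref{thm:Mann-Su_v0} is imported wholesale from \cite[Theorem~1.8]{CMPS}, so there is no internal proof to compare against, and your attempt must be judged as a reconstruction. Your outer layers are sound and match the infrastructure the paper does contain: the reduction to elementary abelian $p$-subgroups via \fref{lem:rank-and-elementary-abelian-subgroups}, the $p$-uniform bound on $\dim H_c^*(M;\Orientation)$ via the universal coefficient theorem and \fref{prop:generalised-Poincare-duality} (the same computation appears in \fref{lem:Cohomologically-trivial-actions-Minkowski}), and the iteration of \fref{prop:Borel-fixpoint-formula} showing that point stabilizers have rank at most $\dim M$. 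One caveat on the latter: the termination step ``$d(K_j)=\dim M$ forces $K_j=\{1\}$'' silently invokes Newman's theorem (fixed sets of nontrivial elements are nowhere dense), which the paper never states, and on a disconnected $M$ it only shows $K_j$ acts trivially on the component of $x$; since the number of components is at most $B$ by \fref{prop:Hd-cohomology-manifold} this is repairable by an induction over components, but it needs saying.

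The genuine gap is your final paragraph. Borel--Hsiang/Quillen localization computes the \emph{support} (Krull dimension) of $H_{H,c}^*(M;\Orientation)$ over $H^*(BH;\BF_p)$, which equals the maximal rank of an isotropy subgroup; your inputs (at most $\tilde C$ stabilizers, each of rank at most $\dim M$) therefore bound the Krull dimension, not $\rank(H)$. In the crucial case of a free action --- and your own reduction concentrates the problem exactly there --- the localized module vanishes and the Krull dimension is $0$ no matter how large $\rank(H)$ is, so no ``dimension count'' of the proposed kind exists: bounding the rank of free actions by Betti numbers is Carlsson's problem for general complexes and requires manifold-specific input. What actually closes the argument is a counting argument in the Borel spectral sequence of the free part $U$, in the spirit of step (4) of the paper's proof of \fref{lem:elementary-p-by-cyclic-free-action}: with $\nu$ minimal such that $H_c^{\nu}(U;\Orientation)\neq 0$, no differential leaves the bottom row, while $H_c^{i+\nu}(U/H;\Orientation)=0$ for $i+\nu>d$ by \fref{prop:free-action-cohomology}\fref{item:8}, so the bottom row in total degree $d+1$ must be consumed entirely by differentials from higher rows; when $H\cong\BZ_p^r$ acts trivially on $H_c^*(U;\Orientation)$ this yields $\binom{i+r-1}{r-1}\le B'\binom{i+r-2}{r}$ with $i=d-\nu+1$ and $B'=\dim H_c^*(U;\Orientation)$, and since the ratio of the two sides grows like $r^2/i^2$, one gets $r=O\big(d\sqrt{B'}\big)$. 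Note, however, that after removing the non-free locus nothing guarantees triviality of the action of $H$ itself on $H_c^*(U;\Orientation)$ (e.g.\ $H_c^{\nu}$ could contain free $\BF_pH$-summands, destroying the lower bound $\binom{i+r-1}{r-1}$ on the bottom row); handling this module-theoretic issue is the real content of the CMPS theorem and is absent from your sketch, whereas your localization step would remain vacuous even granting it.
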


\subsection{Equivariant cohomology}
The most important topological tools in our paper, equivariant cohomology and a spectral sequence converging to it, were essentially introduced by Borel (see \cite[section~IV-3.1]{borel1960seminar}). For discrete group actions, a different but equivalent approach to these notions is due to Grothendieck \cite{grothendieck1957abelianCategories}. We prefer to use the latter approach here.

\begin{defn}
	If $G$ is a discrete group, $X$ is a topological space equipped with a continuous action of $G$, or shortly a \emph{$G$-space}, and $R$ is a commutative unital ring, then a sheaf of $R$-modules $\pi\colon \mathcal A\to X$  on $X$ is a \emph{$G$-sheaf} if we are given a continuous action $\tau$ of $G$ on the sheaf space $\mathcal A$ for which $\tau_g(\mathcal A_x)={\mathcal A_{gx}}$ and the restriction of $\tau_g|_{\mathcal A_x}\colon {\mathcal A_x} \to {\mathcal A_{gx}} $ is a module isomorphism for every $g\in G$ and $x\in X$. 
\end{defn}

The most important examples of $G$-sheaves in this paper are the
orientation sheaves of topological manifolds with a group action
(see \fref{prop:G-structure on orientaion_sheaf}).

Suppose now that $G$ is a discrete group.
If we are given a $G$-sheaf $\mathcal A$ of $R$-modules on the
$G$-space $X$,
then we can consider the $R$-module $\Gamma_{c}^G(\mathcal A)$ of
$G$-equivariant sections of $\mathcal A$ with compact support.
The functor $\Gamma_{c}^G(-)$ is left exact, so it gives rise to the
right derived functors $\rmR^i\Gamma_{c}^G(-)$.

\begin{defn}[Grothendieck {\cite[Sec.~5.7]{grothendieck1957abelianCategories}}]
	The $i$-dimensional equivariant sheaf cohomology of the
	$G$-sheaf $\mathcal A$ of $R$-modules on the $G$-space $X$ with
	compact support is the $R$-module\footnote{ Grothendieck used the
		notation $H_{c}^i(X;G,\mathcal A)$ for this group.}
	\[
	H_{G,c}^i(X;\mathcal A)=\rmR^i\Gamma_{c}^G(\mathcal A).
	\]
\end{defn}

Group cohomology is an important special case of equivariant cohomology.   
\begin{defn}\label{defn:Group-cohomology}
	Let $G$ be a discrete group, $V$ an $RG$-module.
	Let  $\mathcal V$ be the $G$-sheaf on a one point space $\{p\}$,
	with stalk $V$ at $p$, where $G$ acts on the stalk via the
	$RG$-module structure.
	Then the \emph{group cohomology of  $G$ with coefficients in  $V$} is
	$$
	H^*(G;V)=H_{G,c}^*\big(\{p\};\mathcal V\big).
	$$	
\end{defn}

In the following cases the equivariant cohomology can be computed easily.

\begin{prop}\label{prop:free-action-cohomology}
  Let $M$ be a cohomology manifold over $\BF_p$
  and let $G$ be a finite group
  acting continuously on $M$.
  \begin{enumerate}[\indent(a)]
  \item \label{item:8}
    If the action is free, then
    $$
    H_{G,c}^*\big(M;\Orientation\big) \isom
    H_c^*\big(M/G;\Orientation\big).
    $$
  \item \label{item:9}
    If the action is trivial then
    $$
    H_{G,c}^*\big(M;\Orientation\big) \isom
    H_c^*\big(M;\Orientation\big)\otimes H^*(G;\BF_p).
    $$
  \end{enumerate}
  Both isomorphisms are natural transformations
  for $G$-equivariant open embeddings.
\end{prop}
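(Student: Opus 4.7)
The two parts admit different but parallel strategies; both rest on functorial properties of the orientation sheaf together with the derived-functor definition of equivariant cohomology.

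For \emph{part (a)}, the free action makes the quotient map $f\colon M\to M/G$ a covering map (hence a local homeomorphism), so \fref{prop:G-structure on orientaion_sheaf} supplies a canonical isomorphism $\Orientation[M]\isom f^*\Orientation[M/G]$. More generally, $f^*$ gives an equivalence of categories between sheaves of $\BF_p$-modules on $M/G$ and $G$-sheaves on $M$, with inverse $\mathcal A\mapsto(f_*\mathcal A)^G$; under this equivalence, $G$-equivariant compactly supported sections of a $G$-sheaf on $M$ correspond bijectively to compactly supported sections of the descended sheaf on $M/G$, since $G$-invariant compact subsets of $M$ are exactly the preimages of compact subsets of $M/G$. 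Hence the functors $\Gamma_c^G$ (on the source) and $\Gamma_c$ (on the target) are intertwined, and because the equivalence preserves injectives, their derived functors agree, yielding the claimed isomorphism.

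For \emph{part (b)}, triviality of the $G$-action on $M$ forces triviality of the induced action on $\Orientation[M]$, and therefore on each cohomology group $H_c^q(M;\Orientation)$. The Grothendieck spectral sequence arising from the factorization $\Gamma_c^G=(-)^G\circ\Gamma_c$ then reads
\[
E_2^{p,q}=H^p\!\big(G;H_c^q(M;\Orientation)\big)\Longrightarrow H_{G,c}^{p+q}(M;\Orientation),
\]
and, since we are over the field $\BF_p$ and the $G$-module structure on the $E_2$-sheet is trivial, simplifies to $E_2^{p,q}\isom H^p(G;\BF_p)\otimes_{\BF_p}H_c^q(M;\Orientation)$. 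To finish, I would produce an external product
\[
H^*(G;\BF_p)\otimes_{\BF_p}H_c^*(M;\Orientation)\longrightarrow H_{G,c}^*(M;\Orientation)
\]
by pairing the equivariant cohomology of the one-point $G$-space (which is $H^*(G;\BF_p)$ by \fref{defn:Group-cohomology}) with the equivariant cohomology of $\Orientation[M]$ viewed as a trivially acted $G$-sheaf, using the cup product in equivariant sheaf cohomology; this external product splits the edge homomorphisms, forces every differential $d_r$ for $r\ge2$ to vanish, and rules out nontrivial extensions at $E_\infty$.

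Naturality with respect to $G$-equivariant open embeddings is inherited directly from \fref{prop:Orientation-canonical}, from the manifest naturality of the derived functors of $\Gamma_c^G$ and $\Gamma_c$, and from that of the external product. The main technical obstacle is the collapse of the spectral sequence in \emph{part (b)}; if setting up the cup product pairing with compact supports and twisted coefficients proves delicate, a clean alternative is to identify $H_{G,c}^*(M;\Orientation)$ with the compactly supported cohomology of the Borel construction $EG\times_G M$, which under trivial action reduces to $BG\times M$, and then to invoke the Künneth formula over the field $\BF_p$.
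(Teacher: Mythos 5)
Your part (a) is correct, and it is exactly the descent argument underlying the references that the paper itself uses in place of a proof (the paper disposes of (a) by citing \cite[IV~(32)]{bredon2012sheaf} and \cite[Prop.~2.7~(a)]{CMPS}, and of (b) by calling it ``a special case of the K\"unneth formula'' with a pointer to \cite{CMPS}). For a free action of a finite group on a Hausdorff, locally compact space the action is properly discontinuous, $f\colon M\to M/G$ is a finite covering and in particular proper, $f^*$ is an equivalence between sheaves of $\BF_p$-modules on $M/G$ and $G$-sheaves on $M$ with quasi-inverse $\CA\mapsto(f_*\CA)^G$, equivalences preserve injectives, and properness of $f$ gives your identification of $G$-invariant compact sets with preimages of compact sets, so $\Gamma_c^G$ and $\Gamma_c$ are intertwined and the derived functors agree. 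One small point: the isomorphism $(f_*\Orientation[M])^G\isom\Orientation[M/G]$ should be sourced from \fref{prop:Orientation-canonical}, which is stated for cohomology manifolds; \fref{prop:G-structure on orientaion_sheaf} is stated only for topological manifolds, while the proposition being proved concerns cohomology manifolds over $\BF_p$ (one should also note that $M/G$ is again such, since $f$ is a local homeomorphism).

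In part (b) there is a genuine gap at the collapse step: the mere existence of an external product does \emph{not} force the differentials to vanish. In a module spectral sequence over $H^*(G;\BF_p)$ the differentials are $H^*(G;\BF_p)$-linear, but $d_r$ can still be nonzero on the fiber column $\BE_r^{0,q}$; to kill it you need the fiber classes to be permanent cycles, i.e.\ surjectivity of the edge map $H_{G,c}^q(M;\Orientation)\to H_c^q(M;\Orientation)$, and that is precisely what has to be proved --- so as written the argument assumes what it needs. The repair is short and is the honest content of ``special case of K\"unneth'': since the $G$-action on $M$ is trivial, the canonical lift to $\Orientation$ is the identity, so take an injective resolution $\Orientation\to\CJ^\bullet$ of plain sheaves and endow it with the trivial $G$-action; each $\CJ^j$ is still $\Gamma_c$-acyclic, hence $\rmR\Gamma_c(M;\Orientation)$ is represented in the derived category of $\BF_pG$-modules by the complex $\Gamma_c(\CJ^\bullet)$ of \emph{trivial} $G$-modules. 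Over the field $\BF_p$ any vector-space splitting of this complex is automatically $G$-equivariant, so the complex is equivariantly quasi-isomorphic to its cohomology with trivial action, and applying group cochains (i.e.\ $\rmR\Hom_{\BF_pG}(\BF_p,-)$) yields $H_{G,c}^*(M;\Orientation)\isom H^*(G;\BF_p)\otimes H_c^*(M;\Orientation)$, with naturality for equivariant open embeddings visible in the construction. Note that the semisimplicity-style splitting is available only because the modules are trivial: $\BF_pG$ is not semisimple when $p$ divides $|G|$, so ``the cohomology is trivial as a $G$-module'' would not by itself suffice for an arbitrary complex. Finally, your fallback via the Borel construction is not sound as stated: $BG$ is not locally compact, so sheaf cohomology with compact supports on $BG\times M$ is not the right (or even a well-behaved) object, and the comparison with Grothendieck's $H_{G,c}^*$ requires compact supports in the $M$-direction only --- already for $M$ a point and $G=\BZ_p$ your proposal would replace $H^*(G;\BF_p)$ by a compactly supported cohomology of $BG$, which is not the same thing.
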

\begin{proof}
  For \fref{item:8} we refer to
  \cite[IV~(32)]{bredon2012sheaf} or \cite[Prop.~2.7~(a)]{CMPS}.
  \fref{item:9} is a special case of the K\"unneth formula,
  we refer to \cite[Proposition~2.7]{CMPS} for a complete proof.
\end{proof}

In the general case, the following spectral sequence is a basic tool to compute equivariant cohomology.
\begin{prop}[{\cite[Sec.~5.7]{grothendieck1957abelianCategories}}]
	\label{prop:Leray-spectral-sequence}
	Let $G$ be a finite group acting continuously on a topological
        manifold $M$.
	Then there exists a spectral sequence
	\begin{equation*}
	\BE_t^{i,j} \Longrightarrow H_{G,c}^{i+j}(M;\Orientation),
	\quad\quad
	\BE_2^{i,j}=H^i\big(G;H_c^j(M;\Orientation)\big),  
	\end{equation*}
	which is functorial in $M$ with respect to $G$-equivariant open embeddings.
\end{prop}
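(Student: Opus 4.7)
The plan is to derive the spectral sequence as an instance of the Grothendieck composition-of-functors spectral sequence. The key observation is that the functor $\Gamma_c^G$ of $G$-equivariant sections with compact support factors as
\[
\Gamma_c^G \;=\; (-)^G \circ \Gamma_c,
\]
where $\Gamma_c$ is viewed as a functor from the abelian category of $G$-sheaves of $R$-modules on $M$ to the category of $RG$-modules (the $G$-module structure on $\Gamma_c(\mathcal A)$ coming from the $G$-sheaf structure), and a section is $G$-equivariant precisely when it is $G$-invariant as an element of this $RG$-module. Consequently, $H_{G,c}^*=\rmR^*\Gamma_c^G$ fits into a Grothendieck spectral sequence, whose $E_2$ page is built from the right derived functors of the two factors: those of $\Gamma_c$ are compactly supported sheaf cohomology $H_c^*(M;-)$, which inherit a canonical $RG$-module structure via \fref{prop:G-structure on orientaion_sheaf}, and those of $(-)^G$ are the group cohomology functors $H^*(G;-)$, consistent with \fref{defn:Group-cohomology}.

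The main technical obstacle is verifying the hypothesis of Grothendieck's theorem, namely that $\Gamma_c$ sends injective objects of the category of $G$-sheaves to $(-)^G$-acyclic $RG$-modules. This is handled exactly as in \cite[Sec.~5.7]{grothendieck1957abelianCategories}: one shows that, after applying $\Gamma_c$, an injective $G$-sheaf yields a coinduced (or at least relatively injective) $RG$-module, and such modules have vanishing higher group cohomology. Once this verification is in place, Grothendieck's theorem produces the desired spectral sequence
\[
\BE_2^{i,j}=H^i\bigl(G;H_c^j(M;\Orientation)\bigr) \;\Longrightarrow\; H_{G,c}^{i+j}(M;\Orientation).
\]

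Finally, functoriality in $M$ with respect to a $G$-equivariant open embedding $f\colon N\hookrightarrow M$ follows from the naturality of every ingredient. By \fref{prop:Orientation-canonical} combined with \fref{prop:G-structure on orientaion_sheaf}, there is a canonical $G$-equivariant comparison morphism of $G$-sheaves (realized by extending sections of $\Orientation[N]$ by zero to a subsheaf of $\Orientation[M]$); this induces compatible morphisms on both factors of the factorization $\Gamma_c^G=(-)^G\circ\Gamma_c$. Since Grothendieck's construction of his spectral sequence, being obtained by applying the two functors to an injective (or Cartan--Eilenberg) resolution, is itself natural in the input $G$-sheaf, these compatibilities propagate to every page of the spectral sequence, as required.
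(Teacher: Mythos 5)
Your proposal is correct and is essentially the paper's own proof: the paper gives no argument beyond citing \cite[Sec.~5.7]{grothendieck1957abelianCategories}, and what you have written is precisely Grothendieck's derivation there, namely the composite-functor spectral sequence for the factorization $\Gamma_c^G=(-)^G\circ\Gamma_c$ together with the acyclicity check on injective $G$-sheaves and the naturality of the construction. Nothing further is needed.
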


The following long exact sequence is another useful tool for calculations.

\begin{prop} \label{prop:long-exact-sequence-of-closed-subset}
  Let $G$ be a finite $p$-group acting continuously on a topological
  manifold $M$.
  With the notation $F=\Fix{M}{G}$
  and $U=M\setminus F$, 
  we have a long exact sequence
  $$
  \cdots
  H_{G,c}^n\big(U;\Orientation[U]\big)\to
  H_{G,c}^n\big(M;\Orientation[M]\big)\to
  H_{G,c}^n\big(F;\Orientation[F]\big)\to
  H_{G,c}^{n+1}\big(U;\Orientation[U]\big)\cdots
  $$
  which is functorial for $G$-equivariant open embeddings.
\end{prop}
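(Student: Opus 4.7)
The plan is to derive the sequence from a short exact sequence of $G$-sheaves on $M$ associated with the decomposition $M=U\sqcup F$, and then to apply the derived functor formalism for equivariant sheaf cohomology with compact support recalled before \fref{prop:Leray-spectral-sequence}.

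First, since the $G$-action is continuous, $F=\Fix{M}{G}$ is closed in $M$, so $U=M\setminus F$ is an open $G$-invariant subset. Let $j\colon U\hookrightarrow M$ denote the open $G$-equivariant inclusion and $i\colon F\hookrightarrow M$ the closed $G$-equivariant inclusion. I would then invoke the standard short exact sequence of sheaves on $M$,
$$
0\longrightarrow j_!\,j^{*}\Orientation[M]\longrightarrow\Orientation[M]\longrightarrow i_{*}\,i^{*}\Orientation[M]\longrightarrow 0.
$$
By \fref{prop:Orientation-canonical} we have a natural isomorphism $j^{*}\Orientation[M]\isom\Orientation[U]$ of $G$-sheaves, and by \fref{prop:fixed-point-set-is-manifold} a natural isomorphism $i^{*}\Orientation[M]\isom\Orientation[F]$ of $G$-sheaves (using the fact that the connected components of $F$ are cohomology manifolds over $\BF_p$ whose orientation sheaves match those inherited from $M$). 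All three terms therefore carry compatible $G$-sheaf structures inherited from $\Orientation[M]$ (cf.\ \fref{prop:G-structure on orientaion_sheaf}), and the displayed sequence is a short exact sequence of $G$-sheaves on $M$.

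Applying the right derived functor $\rmR^{*}\Gamma_{c}^{G}$ yields a long exact sequence, and to conclude I would identify
$$
\rmR^{n}\Gamma_{c}^{G}\bigl(j_!\Orientation[U]\bigr)\isom H_{G,c}^{n}\bigl(U;\Orientation[U]\bigr),\qquad \rmR^{n}\Gamma_{c}^{G}\bigl(i_{*}\Orientation[F]\bigr)\isom H_{G,c}^{n}\bigl(F;\Orientation[F]\bigr).
$$
The first identification rests on the equality $\Gamma_{c}^{G}\bigl(M;j_!(-)\bigr)=\Gamma_{c}^{G}(U;-)$ (extension by zero does not alter compactly-supported equivariant sections) together with the fact that $j_!$ carries injective $G$-sheaves on $U$ to $\Gamma_{c}^{G}$-acyclic $G$-sheaves on $M$; the second rests on the exactness of $i_{*}$ for the closed embedding $i$ and on $\Gamma_{c}^{G}\bigl(M;i_{*}(-)\bigr)=\Gamma_{c}^{G}(F;-)$.

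The main obstacle is verifying these two derived-functor identifications in the equivariant, compactly-supported setting. Their non-equivariant analogues are standard (cf.\ \cite[Chapter~II]{bredon2012sheaf}); one can deduce the equivariant versions either from Grothendieck's spectral sequence for the composition of derived functors \cite{grothendieck1957abelianCategories}, or more concretely by checking that a Godement-type canonical flabby resolution can be chosen functorially in $G$-sheaves, so that the non-equivariant arguments transfer verbatim. Functoriality with respect to $G$-equivariant open embeddings is then automatic, since every ingredient—the short exact sheaf sequence, the orientation sheaf identifications in \fref{prop:Orientation-canonical} and \fref{prop:fixed-point-set-is-manifold}, and the derived functor $\rmR^{*}\Gamma_{c}^{G}$—is natural in such maps.
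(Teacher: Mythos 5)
Your proposal is sound, but note that the paper does not actually prove this proposition internally: its ``proof'' is a citation to \cite[Proposition~2.6]{CMPS}, described as a special case of the long exact sequence of cohomology with compact support. Your argument is the standard derivation that such a proof amounts to: the short exact sequence $0\to j_!j^*\Orientation[M]\to\Orientation[M]\to i_*i^*\Orientation[F]\to0$ of $G$-sheaves, the identifications via \fref{prop:Orientation-canonical} and \fref{prop:fixed-point-set-is-manifold} (the latter is exactly where the $p$-group hypothesis enters, via Smith theory -- you invoke it correctly), and the derived-functor long exact sequence for $\rmR\Gamma_c^G$. One small point: \fref{prop:fixed-point-set-is-manifold} assumes the action is effective, whereas the proposition does not; this is harmless, since you can apply it to the effective quotient $G/\ker$, which has the same fixed-point set.

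The one place where your sketch leaves real work is the step you yourself flag: $\Gamma_c^G$-acyclicity of $j_!\mathcal I$ for $\mathcal I$ an injective $G$-sheaf on $U$. Be aware that mere $c$-softness of $j_!\mathcal I$ is \emph{not} enough here: since $G$ is a $p$-group and the coefficients are $\BF_p$, the invariants functor has nonvanishing higher derived functors, so in the Grothendieck spectral sequence for the composite $\Gamma_c^G=(-)^G\circ\Gamma_c$ you must also show that $\Gamma_c\big(M;j_!\mathcal I\big)$ is a cohomologically trivial $G$-module. This is where finiteness of $G$ saves you: every injective $G$-sheaf is a direct summand of a coinduced sheaf $\prod_{g\in G}g_*\mathcal J$ with $\mathcal J$ injective (the unit of the coinduction adjunction splits), $j_!$ commutes with this finite coinduction, and $\Gamma_c$ of a coinduced $c$-soft sheaf is a coinduced, hence cohomologically trivial, $G$-module. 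With that supplement (or your alternative via a $G$-functorial Godement resolution, which works for the same reason), both identifications $\rmR^n\Gamma_c^G(j_!\Orientation[U])\isom H_{G,c}^n(U;\Orientation[U])$ and $\rmR^n\Gamma_c^G(i_*\Orientation[F])\isom H_{G,c}^n(F;\Orientation[F])$ go through -- the second being easy since $i_*$ is exact, proper, and preserves injectives -- and functoriality is, as you say, automatic from the naturality of every ingredient.
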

\begin{proof}
  This is a special case of the long exact sequence of cohomology with
  compact support. See \cite[Proposition~2.6]{CMPS} for a complete proof.
\end{proof}

We recall some useful facts about the cohomology groups of elementary
abelian $p$-groups, which will be used to compute the second page of
the spectral sequence in \fref{prop:Leray-spectral-sequence}. In this
paper we do not use the ring structure of $H^*(G;\BF_p)$,
but in the proposition below, it is easier to describe $H^*(G;\BF_p)$ as a graded ring.

\begin{prop} \label{prop:cohomology-of-elementary-abelian-p-groups}
	Let $G$ be an elementary abelian $p$-group of rank $r$.
	\begin{enumerate}[\indent(a)]
        \item \label{item:6}
          $H^1(G;\BF_p)$ is naturally isomorphic to $\Hom(G,\BF_p)$.
        \item \label{item:10}
          For $p=2$, \ $H^*\big(G;\BF_2\big)$
          is the symmetric algebra $S^*\big(H^1(G;\BF_p)\big)$.
		\item \label{item:11}
		If  $p\ge3$, then
		the Bockstein homomorphism
		$\beta^1\colon H^1(G;\BF_p)\hookrightarrow H^2(G;\BF_p)$ is injective,
		and
		$$
		H^*(G;\BF_p) =
		\Lambda^*\big(H^1(G;\BF_p)\big)\otimes
		S^*\big(\beta^1(H^1(G;\BF_p))\big),
		$$
		where, for a vector space $V$ over $\BF_p$,
		$\Lambda^*(V)$ and $S^*(V)$ denote the Grassmannian algebra and the
		symmetric algebra of $V$.
		In particular,
		for all $d$, we have a natural isomorphism
		$$
		H^d(G;\BF_p) =
		\bigoplus_{l+2s=d}
		\Lambda^l\big(\Hom(G;\BF_p)\big) \otimes
		S^s\big(\Hom(G;\BF_p)\big).
		$$
		\item \label{item:12} For all $p$, we have
		$$
		\dim H^d(G;\BF_p) = \binom{d+r-1}{d} = \binom{d+r-1}{r-1}.
		$$
		\item \label{item:14}
		If $V$ is a trivial finite dimensional $\BF_pG$-module, then
		we have
		$$
		H^d(G;V) \isom H^d(G;\BF_p)\otimes V.
		$$
	\end{enumerate}
\end{prop}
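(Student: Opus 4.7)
My plan is to reduce everything to the cyclic case and then apply the K\"unneth formula for group cohomology, together with standard homological algebra for the coefficient extensions. The whole proposition is classical, so my aim is to put each clause into the shortest standard framework rather than re-derive these formulas from scratch.

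For part \fref{item:6}, I would invoke the bar resolution: cocycles $G\to\BF_p$ with trivial action are precisely homomorphisms $G\to\BF_p$, and coboundaries vanish because the action is trivial, giving the natural identification $H^1(G;\BF_p)=\Hom(G,\BF_p)$. For parts \fref{item:10} and \fref{item:11}, I would first establish the cyclic case $G=\BZ/p$ using the standard $2$-periodic free resolution of $\BZ$ over $\BZ[\BZ/p]$: one obtains $H^*(\BZ/p;\BF_2)=\BF_2[x]$ with $|x|=1$ for $p=2$, and $H^*(\BZ/p;\BF_p)=\Lambda(y)\otimes\BF_p[\beta y]$ with $|y|=1$, $|\beta y|=2$, for odd $p$, where $\beta$ is the Bockstein (all standard, see e.g.\ Brown, \emph{Cohomology of Groups}, Ch.\ VI). Writing $G\isom(\BZ/p)^r$ and iterating the K\"unneth formula for the cohomology of a product of finite groups with field coefficients (no Tor terms because $\BF_p$ is a field) yields exactly the stated algebra structure, and part \fref{item:6} identifies the generators of degree $1$ with $\Hom(G,\BF_p)$ naturally.

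For part \fref{item:12}, the cleanest route is to compare Poincar\'e series. In the $p=2$ case $\sum_d\dim H^d(G;\BF_2)t^d=1/(1-t)^r$ since $H^*$ is a symmetric algebra on an $r$-dimensional space in degree $1$. For odd $p$ the Poincar\'e series is $(1+t)^r/(1-t^2)^r=1/(1-t)^r$, so the same answer appears. Expanding $1/(1-t)^r=\sum_d\binom{d+r-1}{r-1}t^d$ gives the formula. Part \fref{item:14} is the universal coefficient theorem: for a trivial $\BF_pG$-module $V$, viewing $V$ as a direct sum of copies of $\BF_p$ and using exactness of tensoring over a field gives $H^d(G;V)\isom H^d(G;\BF_p)\otimes_{\BF_p}V$ naturally.

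No step is genuinely difficult here, so there is no major obstacle; the only mildly technical point is checking that the Bockstein $\beta^1\colon H^1(G;\BF_p)\to H^2(G;\BF_p)$ is injective for odd $p$, which follows from the cyclic case together with K\"unneth since $\beta$ is a stable cohomology operation compatible with external products. Consequently I would present the argument as: establish the cyclic case from an explicit resolution, bootstrap via K\"unneth to obtain the algebra description and injectivity of $\beta^1$, read off the dimension via Poincar\'e series, and conclude the coefficient-change statement by universal coefficients.
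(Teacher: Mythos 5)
Your proposal is correct and follows essentially the same route as the paper, whose entire proof is to cite the cohomology ring of $\BZ_p$ (from Borel's seminar rather than Brown) and deduce all five clauses for $G=\BZ_p^r$ via the K\"unneth formula. Your additional details --- the $2$-periodic resolution, the Poincar\'e series computation for the dimension count, the direct-sum argument for trivial coefficients, and the compatibility of the Bockstein with external products --- are exactly the standard facts the paper leaves implicit.
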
 
\begin{proof}
	The cohomology ring of $\BZ_p$ is described in
	\cite[IV-2.1(3)]{borel1960seminar}. Statements (a), (b), (c), (d) for $G=\BZ_p^r$ and isomorphism \fref{item:14}
	follow from the K\"unneth formula. 
\end{proof}

\section{Free actions}
\label{sec:free-actions}

In this section we study groups of the form $G=P\semidirect H$
acting effectively on a topological manifold $M$,
where $P$ is a $p$-group acting freely on $M$, and $p$ does not divide $|H|$.
First we consider the base case $P\isom\BZ_p^r$ which will serve as the starting point for our inductive arguments later on.

Our proof needs the extra technical condition that the action of $P$ on $H_c^*(M;\Orientation)$ should be trivial. However, when the manifold $M$ will be modified during the proof, we shall loose  control over the $P$-action on the cohomology of the new manifold.
The following lemma allows us to go around this problem using the $H$-action instead, which will be easier to control.

\begin{lem} \label{lem:semidirect-product-trivial-H-action}
  Let $G$ be a finite group of the form $G=A\semidirect_\alpha H$,
  where 
  $A\isom\BZ_p^r$, $\alpha \colon H\to\Aut(A)$ is an $H$-action,
  and $|H|$ is not divisible by $p$.
  Let $A'\le A$ denote the subgroup of $H$-invariant elements. Then 
  $A'$ has an $H$-invariant complement $A''\le A$.
  Whenever $G$ acts on a set $X$ so that the subgroup $H$
  acts on $X$ trivially,
  the action of  $A''$  on $X$ is trivial as well.
\end{lem}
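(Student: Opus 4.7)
The first assertion is a direct application of Maschke's theorem. Since $|H|$ is coprime to $p$, the group algebra $\BF_p[H]$ is semisimple, so $A$, regarded as an $\BF_p[H]$-module via $\alpha$, is semisimple, and the $H$-invariant submodule $A'=A^H$ admits an $H$-invariant complement $A''$.

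For the second assertion, the plan is to combine the averaging projector on $A$ with a simple consequence of the triviality of the $H$-action on $X$. For every $a\in A$, $h\in H$, and $x\in X$, since $H$ fixes $X$ pointwise,
$$
\alpha(h)(a)\cdot x \;=\; (hah^{-1})\cdot x \;=\; h\cdot\bigl(a\cdot(h^{-1}\cdot x)\bigr) \;=\; h\cdot(a\cdot x) \;=\; a\cdot x.
$$
Writing the elementary abelian group $A$ additively, this says $\alpha(h)(a)-a\in A_x:=\Stab_A(x)$ for every $a\in A$ and $h\in H$.

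To conclude I would invoke the averaging endomorphism $e=\frac{1}{|H|}\sum_{h\in H}\alpha(h)$ of the $\BF_p$-vector space $A$, which is well defined because $|H|$ is invertible modulo $p$. It is $H$-equivariant and acts as the identity on $A'$, so it is a projector with image $A'$. Since $A''$ is $H$-invariant, $e(A'')\subseteq A''$; combined with $e(A'')\subseteq A'$ this gives $e(A'')=0$, i.e., $\sum_{h\in H}\alpha(h)(a)=0$ for every $a\in A''$. Summing the containments $\alpha(h)(a)-a\in A_x$ over $h\in H$ then yields
$$
-|H|\cdot a \;=\; \sum_{h\in H}\bigl(\alpha(h)(a)-a\bigr)\;\in\;A_x,
$$
and since $|H|$ is a unit modulo $p$, multiplication by $|H|$ is an automorphism of $A$, forcing $a\in A_x$. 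Thus every $a\in A''$ fixes every point of $X$.

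There is no serious obstacle in this argument; the only conceptual point is to recognise that the triviality of the $H$-action on $X$ makes the conjugation action of $H$ on $A$ invisible to the $A$-action on $X$, so that the averaging projector, which vanishes on $A''$, forces each $a\in A''$ into every stabiliser $A_x$.
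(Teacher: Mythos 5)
Your proof is correct, and while your first half (Maschke) coincides with the paper's, your argument for the second assertion takes a genuinely different route. The paper works structurally with the kernel $N\normal G$ of the $G$-action on $X$: since $H\le N$, any irreducible $H$-submodule $0\ne B\le A''$ is normal in $G$, so $B\cap N$ is again normal in $G$; if $B\cap N$ were trivial, then $B$ would commute with $N$ and hence with $H$, contradicting the nontriviality of $B$ as an $H$-module, so irreducibility forces $B\le N$, and $A''\le N$ since $A''$ is a sum of such $B$'s. You instead argue pointwise: the identity $\alpha(h)(a)\cdot x=a\cdot x$ places $\alpha(h)(a)-a$ in each stabilizer $\Stab_A(x)$, and your averaging projector $e=\frac{1}{|H|}\sum_{h\in H}\alpha(h)$ annihilates $A''$ (because $e(A'')\subseteq A'\cap A''=0$), so summing over $h$ gives $|H|\,a\in\Stab_A(x)$ for every $a\in A''$, and coprimality yields $a\in\Stab_A(x)$. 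Your route never decomposes $A''$ into irreducibles and invokes no normality or commutator considerations --- it uses only the defining properties of the complement ($H$-invariance and $A'\cap A''=0$), so it applies verbatim to \emph{any} $H$-invariant complement and is arguably more elementary; the mild cost is that the hypothesis $p\nmid|H|$ is used a second time, to invert $|H|$ on the $\BF_p$-vector space $A$. The paper's kernel argument, by contrast, is shorter for a reader fluent in normal-subgroup calculus and packages the conclusion as a purely group-theoretic statement about the kernel of the action.
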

\begin{proof}
  Since $|H|$ is not divisible by $p$, $A$ decomposes into a direct sum of irreducible $H$-modules by Maschke's theorem. The sum of the non-trivial summands in this decomposition is a good choice for $A''$.

 We have to prove that the kernel $N\normal G$ of the $G$-action on $X$ contains $A''$. By assumption $H\le N$.
  
  Let $0\neq B\le A''$ be any irreducible $H$-submodule of $A''$.
  Then $B\cap N$ is normal in $G$, so it is an $H$-submodule of $B$.
  If it were $0$, then $B$ would commute with $N$,
  hence commute with $H$, contrary to the definition of $A''$.

  Therefore $B\cap N\ne0$, hence $B\le N$ by the irreducibility of $B$.
  This holds for all irreducible submodules $B\le A''$,
  hence $A''\le N$.
\end{proof}

\begin{lem} \label{lem:elementary-p-by-cyclic-free-action}
	For all integers $r,d\ge0$, there is an integer $n_1(r,d)$
	with the following property.\\
	Let $G$ be a finite group of the form $G=A\semidirect_{\alpha} H$,
	where 
	$A\isom\BZ_p^r$, $\alpha\colon H\to\Aut(A)$ is an $H$-action,
	and $|H|$ is not divisible by $p$.
	Let $M$ be a $d$-dimensional topological manifold with an effective $G$-action.
	Suppose that
	the subgroup $A$ acts freely on $M$,
	and the induced action of the subgroup $H$
	on $H_c^*(M;\Orientation)$
	is trivial.
	Then $H$ has a subgroup $\tilde H$ of index at most $n_1(r,d)$
	commuting with $A$.
\end{lem}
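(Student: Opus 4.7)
The plan is to bound $|\bar H| := |H/C_H(A)|$, which gives the index of $\tilde H = C_H(A)$ in $H$.

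First I would apply \fref{lem:semidirect-product-trivial-H-action} in two ways. Decomposing $A=A'\oplus A''$ as $H$-modules with $A'=A^H$, the image of $H$ in $\Aut(A)$ equals its image in $\Aut(A'')$, and $\bar H$ acts faithfully on $A''$. Second, applied to the $H$-trivial $G$-set $X=H_c^*(M;\Orientation)$, the lemma shows that $A''$ acts trivially on $H_c^*(M;\Orientation)$.

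Next I apply the Borel spectral sequence (\fref{prop:Leray-spectral-sequence}) for the free action of $A''$ on $M$, which, combined with \fref{prop:free-action-cohomology}(a), takes the form
\[
  \BE_2^{i,j}\;=\;H^i(A'';\BF_p)\otimes H_c^j(M;\Orientation)\;\Longrightarrow\;H_c^{i+j}(M/A'';\Orientation),
\]
and is functorial and $H$-equivariant, with $H$ acting trivially on the second tensor factor and through $\bar H$ on $H^i(A'';\BF_p)$. Since $M/A''$ is a $d$-dimensional topological manifold, Poincar\'e duality (\fref{prop:generalised-Poincare-duality}) gives $\BE_\infty^{i,j}=0$ whenever $i+j>d$.

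The key observation is the following. Set $j_*=\min\{j\colon H_c^j(M;\Orientation)\neq 0\}$, which exists and satisfies $j_*\le d$ by \fref{prop:Hd-cohomology-manifold}. Differentials $d_r$ emanating from the row $j=j_*$ would target rows of smaller index, which vanish by minimality; hence that row admits only incoming differentials $d_r\colon \BE_r^{i-r,j_*+r-1}\to\BE_r^{i,j_*}$ with $2\le r\le d-j_*+1$ (otherwise the source is zero). The vanishing $\BE_\infty^{i,j_*}=0$ for $i+j_*>d$ therefore forces every $\bar H$-irreducible constituent of $H^i(A'';\BF_p)$ to be a constituent of some $H^{i-r}(A'';\BF_p)$ with $2\le r\le d-j_*+1$. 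Iterating, every $\bar H$-irreducible constituent of $H^*(A'';\BF_p)$ appears already in
\[
W:=\bigoplus_{i\le d-j_*}H^i(A'';\BF_p),
\]
a subspace of dimension $\binom{d-j_*+\rank A''}{\rank A''}\le\binom{d+r}{r}$ by \fref{prop:cohomology-of-elementary-abelian-p-groups}(d).

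Finally, since $\bar H$ acts faithfully on $A''$ with $|\bar H|$ coprime to $p$, a standard fact from modular invariant theory asserts that every absolutely irreducible $\BF_p\bar H$-module occurs as a constituent of the polynomial ring $S^*(A''{}^*)\subseteq H^*(A'';\BF_p)$, hence of $W$. Combining this with Maschke's theorem (which applies because $|\bar H|$ is coprime to $p$) yields
\[
|\bar H|\;=\;\sum_\rho(\dim\rho)^2\;\le\;\Bigl(\sum_\rho\dim\rho\Bigr)^{\!2}\;\le\;(\dim W)^2\;\le\;\binom{d+r}{r}^{\!2},
\]
where the sum runs over absolutely irreducible $\bar H$-modules. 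One may therefore take $\tilde H=C_H(A)$ and $n_1(r,d)=\binom{d+r}{r}^2$. The main obstacle I anticipate is carefully tracking the $H$-equivariance in the spectral sequence argument and, above all, giving a clean self-contained justification of the last invariant-theoretic claim that every irreducible representation of the faithful $p'$-group $\bar H$ appears in $S^*(A''{}^*)$.
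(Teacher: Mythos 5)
Your proposal is correct and is essentially the paper's own proof: the same decomposition $A=A'\oplus A''$ via \fref{lem:semidirect-product-trivial-H-action}, the same Borel spectral sequence for the free $A''$-action with the minimal nonvanishing row $j_*$ admitting only incoming differentials so that every irreducible constituent propagates down into degrees $\le d-j_*$, and the same counting $|\bar H|=\sum_i d_i^2\le\bigl(\sum_i d_i\bigr)^2\le\binom{d+r}{r}^2$. The single step you flag as needing a self-contained justification is exactly what the paper supplies (its step (3)): since $\overline{\BF}_p\otimes A''$ is not a union of finitely many proper subspaces, there is a vector $v$ whose $K$-orbit has $|K|$ distinct points and a linear form $\ell$ separating the orbit, and the $K$-submodule generated by the Lagrange-type polynomial $P(x)=\prod_{g\in K\setminus\{e\}}\bigl(\ell(x)-\ell(gv)\bigr)/\bigl(\ell(v)-\ell(gv)\bigr)$ inside $\bigoplus_{l=0}^{|K|-1}S^l\bigl(\Hom(\overline{\BF}_p\otimes A'',\overline{\BF}_p)\bigr)$ is a copy of the regular representation, so every irreducible occurs in the symmetric algebra (in degree at most $|K|-1$, though by your stabilization argument no degree bound is actually needed).
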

\begin{proof}
  As in \fref{lem:semidirect-product-trivial-H-action},
  let $A'\le A$ denote the subgroup of $H$-invariant elements
  and $A''$ denote its $H$-invariant complement.  
  It is enough to find a subgroup $\tilde H\le H$ of bounded index
  which commute with $A''$,
  since this $\tilde H$ will commute with $A=A'\oplus A''$ as well.
  
  The $H$-action on $A''$ is a homomorphism
  $\alpha''\colon H\to\Aut(A'')$.
  The subgroup $\tilde H=\ker(\alpha'')$ commutes with
  $A''$, so we need to bound its index, that is the order of the group
  $K=\im(\alpha'')\cong H/\tilde H$.
  We shall find a bound on $\big|K\big|$ using the fact that if
  $\chi_1,\dots,\chi_c$ are representatives of the isomorphism classes
  of the irreducible representations of $K$ over the algebraic closure
  $\overline{\BF}_p$	of $\BF_p$, and $\chi_i$ acts on a vector
  space of dimension $d_i$, then
  \[\big|K\big|=\sum_{i=1}^c d_i^2\leq \Big(\sum_{i=1}^c d_i\Big)^2.\] 
	
  As $p$ does not divide $|K|$, any finite dimensional representation $\tilde \chi$ of $K$ over $\overline{\BF}_p$ can be decomposed into the direct sum of irreducible representations. We shall say that the irreducible representation \emph{$\chi_i$ is contained in $\tilde\chi$}, if the multiplicity $m_i$ of  $\chi_i$ in the decomposition $\tilde \chi\cong\bigoplus_{i=1}^c m_i\chi_i$ is positive. 
	
  By \fref{lem:semidirect-product-trivial-H-action},
  the natural $A''$-action on $H_c(M;\Orientation)$ is trivial.
  Applying \fref{prop:Leray-spectral-sequence}
  to the free $A''$-action on $M$,
  we obtain a spectral sequence
  $$
  \BE_t^{i,j} \Rightarrow H_c^{i+j}(M/A'';\Orientation),
  $$
  $$
  \BE_2^{i,j} = H^i\big(A'';H_c^j(M;\Orientation)\big) \isom
  H^i(A'';\BF_p)\otimes_{\BF_p} H_c^j(M;\Orientation).
  $$
  The group $H$ acts canonically on the entire spectral sequence,
  and the subgroup $\tilde H$ acts trivially on $\BE_2^{i,j}$. Therefore
  the $\tilde H$-action is trivial on each $\BE_t^{i,j}$,
  so the $H$ action induces a $K$-action on the entire spectral sequence.
  Denote by $\chi_t^{i,j}$ the representation of $K$ induced on $\overline{\BF}_p\otimes \BE_t^{i,j}$ and by $R^{i,j}_t$ the set of irreducible representations contained in $\chi_t^{i,j}$ . We collect some facts on the sets  $R^{i,j}_t$.
	
	(1)  The $K$-module $\BE_{t+1}^{i,j}$ is a factor of a submodule of  $\BE_{t}^{i,j}$, hence $R^{i,j}_2\supseteq R^{i,j}_3\supseteq R^{i,j}_4\supseteq\dots$  is a weakly decreasing sequence.
	
	(2) The inclusion map  $\chi\colon K\hookrightarrow\Aut(A'')\hookrightarrow \GL(r,\overline{\BF}_p)$ is a  faithful linear representation of $K$ over $\overline{\BF}_p$.	 Denote by $$\chi^*\colon K \to \GL(\Hom(\overline{\BF}_p \otimes A'',\overline{\BF}_p))$$  the dual representation of $\chi$, and let  $R^i$ be the set  of irreducible representations contained in $S^i(\chi^*)$  if $p=2$ and in  $\bigoplus_{k+2l=i}\Lambda^k(\chi^*)\otimes S^l(\chi^*)$ if $p$ is odd.   \fref{prop:cohomology-of-elementary-abelian-p-groups}
	implies that if $H_c^j(M;\Orientation)\neq 0$, then $R_2^{i,j}=R^i$, otherwise $R_2^{i,j}=\emptyset$.
	
	(3) Introduce the sets $\tilde R^i=  \bigcup_{j=0}^{i}R^{j}$. We claim that all the irreducible representations $\chi_1,\dots,\chi_c$ are contained in  the representation $\bigoplus_{l=0} ^{|K|-1} S^l(\chi^*)$, in particular, they are all contained in  $\tilde R^{2(|K|-1)}$. Indeed, as $\chi$ is a faithful representation of $K$, the fixed point set of the action of an element $g\in K\setminus\{e\}$ on $\overline{\BF}_p\otimes A''$ is a proper linear subspace of $\overline{\BF}_p\otimes A''$. Since $\overline{\BF}_p$ is an infinite field, $\overline{\BF}_p\otimes A''$ cannot be covered by a finite number of proper linear subspaces, so there exists a vector $v\in \overline{\BF}_p\otimes A''$ such that the $K$-orbit of $v$ consists of $|K|$ distinct points, and we can also choose a linear function $\ell\in \Hom(\overline{\BF}_p\otimes A'',\overline{\BF}_p)$ which takes different values on the points of the orbit $Kv$. Then the Lagrange-type polynomial function $P\in \bigoplus_{l=0} ^{|K|-1}S^l(\Hom(\overline{\BF}_p\otimes A'',\overline{\BF}_p))$ defined by
	\[P(x)=\prod_{g\in K\setminus\{e\}}\frac{\ell(x)-\ell(gv)}{\ell(v)-\ell(gv)}\]
	takes the value $1$ at $v$ and vanishes at all other points of the orbit $Kv$. Consequently, for any $g\in K$, the polynomial $gP$ takes the value $1$ at $gv$ and vanishes at all other points of $Kv$. This implies that the representation of $K$ induced on the $K$-submodule of $\bigoplus_{l=0} ^{|K|-1}S^l(\Hom(\overline{\BF}_p\otimes A'',\overline{\BF}_p))$ generated by $P$ is isomorphic to the regular representation of $K$  over $\overline{\BF}_p$. This completes the proof of our claim since all of the irreducible representations $\chi_1,\dots,\chi_c$ are contained in the  regular representation of $K$ over $\overline{\BF}_p$. 
	
	(4) By the Poincar\'e duality, there is a smallest index $\nu$
	such that $H_c^\nu(M;\Orientation)\neq0$.
	By \fref{prop:cohomology-of-elementary-abelian-p-groups}~\fref{item:12} and \fref{item:14}, 
	$\BE_2^{i,\nu}\neq0$ for all $i\ge0$,
	and $\BE_2^{i,j}=0$ for all $j<\nu$, \ $i\ge0$.
	Thus, we have $\BE_{t+1}^{i,\nu}\cong \BE_t^{i,\nu}/d_t(\BE_t^{i-t,\nu+t-1})$  for all $t\ge2$. This implies that if an irreducible representation of $K$ is contained in the module $\overline{\BF}_p\otimes \BE_t^{i,\nu}$, then it is either contained also in $\overline{\BF}_p\otimes \BE_{t+1}^{i,\nu}$ or it must be contained in $\overline{\BF}_p\otimes \BE_t^{i-t,\nu+t-1}$.
	
	If $i>d-\nu$, then $H_c^{i+\nu}(M/A'';\Orientation)=0$ implies $\BE_{\infty}^{i,\nu}=0$, so $\bigcap_{t=2}^{\infty}R_t^{i,\nu}=\emptyset$. By the previous observation, this means that \[R^i=R_2^{i,\nu}\subseteq \bigcup_{t=2}^{i}R_t^{i-t,\nu+t-1}\subseteq \tilde R^{i-2},\]
	therefore $\tilde R^{i}=\tilde R^{i-1}$.
	Iterating this equality, we obtain that for any $i>d-\nu$, we have $\tilde R^{i}=\tilde R^{d-\nu}$. Thus, (3) implies that the set $\tilde R^{d-\nu}\supseteq \tilde R^{2(|K|-1)}$ contains all irreducible representations of $K$. 
	
	(5) Since all the irreducible representations of $K$ are contained in the representation of $K$ on the module $\otimes_{i=0}^{d-\nu}\overline{\BF}_p\otimes H^i(A'',\BF_p)$, 
	\[
          \sum_{i=1}^c d_i\leq \sum_{i=1}^{d-\nu}\dim H^i(A'',\BF_p) \le
          \sum_{i=1}^{d-\nu}\binom{i+r-1}{r-1}=\binom{d-\nu+r-1}{r},
	\]
	hence $|K|\leq \binom{d+r}{r}^2$. 
\end{proof}

\begin{lem} \label{lem:nilpotent-by-cyclic-free-action}
  For all integers $r,d,B\ge0$, there is an integer $n_2(r,d,B)$
  with the following property.\\
  Let $G=P\semidirect H$ be the semidirect product of a $p$-group $P$
  of order at most $p^r$ and a finite group $H$, the order of which is
  not divisible by $p$.
  Let $M$ be a connected $d$-dimensional topological manifold with an effective $G$-action.
  Suppose that
  $\dim H_c^*(M;\Orientation)\le B$,
  the $P$-action on $M$ is free,
  and the induced $H$-action on $H_c^*(M;\Orientation)$ is trivial.
  Then  $H$ has a subgroup $\tilde H$ of index at most $n_2(r,d,B)$
  commuting with $P$.
\end{lem}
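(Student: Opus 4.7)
The plan is to argue by induction on $|P|$, following the scheme sketched in the introduction. The base case $|P|=1$ is trivial. For the inductive step, let $E = \Omega_1(\CZ(P))$, the subgroup of elements of order at most $p$ in the centre of $P$; since $P$ is nontrivial, so is $\CZ(P)$, and $E \isom \BZ_p^s$ for some $1 \le s \le r$. As $E$ is characteristic in $P$, it is normal in $G$, and $E \semidirect H$ is a subgroup of $G$ satisfying the hypotheses of \fref{lem:elementary-p-by-cyclic-free-action} ($E$ elementary abelian of rank $\le r$; $|H|$ coprime to $p$; the $E$-action on $M$ is free; the $H$-action on $H_c^*(M;\Orientation)$ is trivial). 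That lemma produces a subgroup $\tilde H_1 \le H$ of index at most $n_1(r,d)$ commuting with $E$, so that $E$ becomes central in $G_1 := P \semidirect \tilde H_1$.

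I then pass to the quotient. Since the central subgroup $E$ acts freely, $M/E$ is a connected $d$-manifold on which $G_1/E \isom (P/E) \semidirect \tilde H_1$ acts. The action is effective: if $(\bar p, h) \in G_1/E$ acts trivially on $M/E$, then by continuity and connectedness of $M$ we have $p\cdot h(x) = e\, x$ for a fixed $e \in E$ and all $x$, so $e^{-1}ph$ acts trivially on $M$ and must equal $1$ in $G_1$, forcing $(\bar p, h) = 1$. The induced $P/E$-action on $M/E$ is free, and $|P/E| < |P|$, so the inductive hypothesis will apply once I verify the remaining two assumptions.

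The cohomology bound uses \fref{prop:free-action-cohomology}\fref{item:8} to identify $H_c^*(M/E;\Orientation)$ with $H_{E,c}^*(M;\Orientation)$, together with the spectral sequence of \fref{prop:Leray-spectral-sequence}, whose second page is $E_2^{i,j} = H^i(E; H_c^j(M;\Orientation))$. Since $\BF_p E$ is a local ring, the $\BF_p E$-module $H_c^j(M;\Orientation)$ has a composition series with trivial simple factors, giving $\dim E_2^{i,j} \le \dim H_c^j(M;\Orientation) \cdot \dim H^i(E;\BF_p)$; summing over $i+j \le d$ and using \fref{prop:cohomology-of-elementary-abelian-p-groups}\fref{item:12} (whose value is independent of $p$) produces a bound $B' = B'(r,d,B)$. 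For the triviality of the $\tilde H_1$-action, observe that $\tilde H_1$ acts on $E_2^{i,j}$ through its conjugation action on $E$ (trivial by construction) and its action on $H_c^j(M;\Orientation)$ (trivial by hypothesis), hence trivially on every subsequent page and on the associated graded of $H_c^*(M/E;\Orientation)$. Since $|\tilde H_1|$ is coprime to $p$, every $h \in \tilde H_1$ is diagonalisable over $\overline{\BF}_p$, while $h - 1$ is nilpotent on $H_c^*(M/E;\Orientation)$ (it lowers the spectral sequence filtration); only $h = 1$ is compatible with both conditions, so $\tilde H_1$ acts trivially on $H_c^*(M/E;\Orientation)$.

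The inductive hypothesis now yields $\tilde H_2 \le \tilde H_1$ of index at most $n_2(r,d,B')$ in $\tilde H_1$, commuting with $P/E$ inside $G_1/E$. The final step promotes commutation modulo $E$ to genuine commutation: for $h \in \tilde H_2$, set $\phi_h(p) = hph^{-1}$ and $\psi_h(p) = \phi_h(p)p^{-1} \in E$. Since $E$ is central in $G_1$ and fixed pointwise by $\tilde H_1$, a direct calculation shows that $\psi_h \colon P \to E$ is a group homomorphism and $\phi_h^k(p) = \psi_h(p)^k\, p$ for every $k \ge 0$; taking $k = |h|$ yields $\psi_h(p)^{|h|} = 1$, and since $\psi_h(p) \in E$ has order dividing $p$ while $|h|$ is coprime to $p$, we conclude $\psi_h \equiv 1$, so $\tilde H_2$ centralises $P$. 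The index of $\tilde H_2$ in $H$ is then bounded by $n_1(r,d) \cdot n_2(r,d,B')$, which completes the induction. The main obstacle is keeping every constant uniform in $p$: this hinges on the $p$-independence of $\dim H^i(E;\BF_p)$ as a function of $i$ and $\mathrm{rank}(E)$, and on exploiting the coprimality of $|\tilde H_1|$ with $p$ both to transfer triviality from the associated graded to the full cohomology and, at the very end, to promote $\bigl[\tilde H_2,P\bigr] \subseteq E$ to $\bigl[\tilde H_2, P\bigr] = 1$.
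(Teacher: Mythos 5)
Your proof is correct and follows essentially the same route as the paper's: induction via the socle $E=\Omega_1(\CZ(P))$ of the centre, an application of \fref{lem:elementary-p-by-cyclic-free-action} to centralise $E$, passage to the effective $(P/E)\semidirect\tilde H_1$-action on $M/E$ with the same connectedness argument for effectiveness and the same Borel spectral sequence bound, and finally coprimality to upgrade $[\tilde H_2,P]\subseteq E$ to $[\tilde H_2,P]=1$. Your only deviations are expository: you supply two steps the paper leaves implicit (the composition-series bound for $\dim H^i(E;V)$ when $E$ may act nontrivially on cohomology, and the semisimple-plus-filtration argument passing from the $\BE_2$ page to $H_c^*(M/E;\Orientation)$), and you replace the paper's citation that the kernel of $\Aut(P)\to\Aut(E)\times\Aut(P/E)$ is a $p$-group by the equivalent direct computation with $\psi_h$.
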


\begin{proof}
	We prove the lemma  via induction on $r$.
	For $r=0$, we have $P=\{1\}$, so the lemma holds in this case with $n_2(0,d,B)=1$.
	Assume that $r\ge1$, and $P\neq\{1\}$.
	
	Let $A$ denote the socle of $\CZ(P)$.
	It is a characteristic subgroup of $G$ acting freely on $M$,
	and $A\isom\BZ_p^\rho$ for some positive $\rho\le r$.
	\fref{lem:elementary-p-by-cyclic-free-action}
	gives us a subgroup
	$H_1\le H$
	of index at most
	$\max\big\{n_1(\rho,d) \;\big|\;\allowbreak 0\le\rho\le r \big\}$
	commuting with $A$.
	
	Consider the subgroup $\tilde G=P\semidirect H_1$.
	$A$ is central (hence normal) in $\tilde G$.
	Therefore the quotient space $N=M/A$ is a $d$-dimensional
        topological manifold with an induced action of $\tilde G/A$.
	$H_c^*(N;\Orientation)$ can be calculated via
	the Borel spectral sequence
	(see \fref{prop:Leray-spectral-sequence}).
	Since $A$ is central in $\tilde G$,
	the $H_1$-action on the $\BE_2$ page of the spectral sequence is
	trivial.
	Therefore the $H_1$-action on $H_c^*(N;\Orientation)$ is trivial.
	Moreover, we obtain the bound
	$$
	\dim H_c^*(N;\Orientation) =\dim H_{A,c}^*(M;\Orientation)\le
	\sum_{i+j\le d}\dim \BE_2^{i,j} \le
	\binom{d+r}{r}B.
	$$
	
	The group $\tilde G/A=(P/A)\semidirect H_1$ acts on $N$.
	If $h\in \tilde G$ acts trivially on $N$,
	then there is a unique continuous map $a\colon M\to A$ such
	that $h(x)=a(x)x$. As $A$ is discrete, $M$ is connected, $a(x)\equiv
	a\in A$ is constant. Then $a^{-1}h$ acts trivially on $M$, so
	$h=a\in A$. This implies that $\tilde G/A$ acts effectively on $N$,
	so we can apply the induction hypothesis.
	We obtain a subgroup $\tilde H\le H_1$
	of index at most $n_2\big(r-1,d, \binom{d+r}{r}B\big)$
	acting trivially on $P/A$.
	Recall that $\tilde H$ acts trivially on $A$ as well.
	
	The conjugation action of $\tilde H$ lies
	in the kernel of the restriction homomorphism
	$\Aut(P)\to\Aut(A)\times\Aut(P/A)$.
	This kernel is a $p$-group,
	and $|\tilde H|$ is not divisible by $p$.
	Hence $\tilde H$ acts trivially on $P$.
	Moreover, $|H\colon\tilde H|=|H\colon H_1|\cdot|H_1\colon\tilde H|$
	is bounded in terms of $r,d,B$.
	The induction step is complete.
\end{proof}

\section{Reduction to free action}
\label{sec:Reduction-to-free-action}

The main goal of this section is to prove the following.

\begin{lem} \label{lem:action-of-H-on-cohomology-of-U-is-trivial}
  For all integers $d,B\ge0$, there is an integer $n_3(d,B)$
  with the following property.\\
  Let $p$ be a prime larger than $\tilde C(d,B)$ (defined in \fref{prop:number-of-stapilizers-is-bounded}),
  and let $G=P\semidirect H$ be the semidirect product of a $p$-group $P$
  and a finite group $H$ of order not divisible by $p$.
  Suppose that $G$ acts effectively on
  a connected topological $d$-manifold $M$
  such that
  $\dim H_c^*(M;\Orientation)\le B$,
  and the induced $H$-action on $H_c^*(M;\Orientation)$ is trivial.
  Let $U\subseteq M$ be the largest open subset where the $P$-action
  is free.
  Then $U$ is $G$-invariant, connected,
  \begin{equation}\label{eq:lemma_6.1_ineq}
    \dim H_c^*(U;\Orientation)\le 2^{|\Stab(P,M)|}B\le 2^{\tilde C(d,B)}B,
  \end{equation}
  and $H$ has a subgroup $\tilde H$ of index at most $n_3(d,B)$
  which acts trivially on $H_c^*(U;\Orientation)$.
\end{lem}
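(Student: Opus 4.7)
The plan is to carry out the argument in four main steps, with the fourth being the principal obstacle.

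\emph{Easy geometric statements and cardinality bounds.} $G$-invariance of $U$ is immediate from $P\normal G$: for $x\in U$ and $g\in G$ the stabilizer $P_{gx}=gP_xg^{-1}$ is trivial. To bound $|\Stab(P,M)|$, I would apply \fref{prop:number-of-stapilizers-is-bounded}: the characteristic subgroup of $P$ it produces has $p$-power index at most $\tilde C(d,B)$, but since $p>\tilde C(d,B)$ this index must be $1$, so $|\Stab(P,M)|\le\tilde C(d,B)$. For connectedness of $U$, note that $p\ge3$ (as $\tilde C\ge2$ in all non-trivial cases), so every non-trivial $Q\le P$ has $|Q|\ge3$; by Smith theory (\fref{prop:fixed-point-set-is-manifold} combined with \cite[V-2.6]{borel1960seminar}), each $M^Q$ is a cohomology manifold of dimension at most $d-2$, and a finite union of such closed subsets cannot disconnect the connected manifold $M$.

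\emph{The dimension estimate \fref{eq:lemma_6.1_ineq}.} Argue by induction on $s=|\Stab(P,M)|$: the base case $s=1$ is trivial as $U=M$, and for $s>1$ pick any non-trivial $Q\in\Stab(P,M)$ and set $M'=M\setminus M^Q$. The open/closed long exact sequence underlying \fref{prop:long-exact-sequence-of-closed-subset}, together with the Smith-theoretic bound $\dim H_c^*(M^Q;\Orientation)\le\dim H_c^*(M;\Orientation)$ from \fref{prop:fixed-point-set-is-manifold}, gives $\dim H_c^*(M';\Orientation)\le 2\dim H_c^*(M;\Orientation)$. Since $|\Stab(P,M')|<s$, iterating yields $\dim H_c^*(U;\Orientation)\le 2^{s}B\le 2^{\tilde C(d,B)}B$.

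\emph{Reduction to the action on the removed strata.} Let $H_1\le H$ be the kernel of the conjugation action of $H$ on the finite set $\Stab(P,M)$; then $[H:H_1]\le\tilde C(d,B)!$ and $H_1$ normalises every $Q\in\Stab(P,M)$, so every $M^Q$ is $H_1$-invariant. Build a chain $M=M_0\supsetneq M_1\supsetneq\cdots\supsetneq M_s=U$ of $H_1$-invariant open subsets by setting $M_{i+1}=M_i\setminus F_i$ with $F_i=M_i^{Q_i}$ for some non-trivial $Q_i\in\Stab(P,M_i)$. Because $|H_1|$ is coprime to $p$, every $\BF_pH_1$-module decomposes canonically into its trivial isotypic part and its non-trivial part $V^{\circ}$, and the decomposition is respected by all $H_1$-equivariant maps. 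Starting from $H_c^*(M;\Orientation)^{\circ}=0$ (which is the hypothesis) and chasing the $\circ$-part of the long exact sequence at each removal step, an induction on $i$ shows that $H_c^*(M_i;\Orientation)^{\circ}$ is an iterated subquotient of $\bigoplus_{j<i}H_c^*(F_j;\Orientation)$. Consequently, any subgroup $\tilde H\le H_1$ that acts trivially on each $H_c^*(F_j;\Orientation)$ acts trivially on $H_c^*(U;\Orientation)^{\circ}$, and hence (together with the trivial isotypic part, which automatically carries a trivial action) on all of $H_c^*(U;\Orientation)$.

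\emph{The main obstacle.} The remaining task is to find $\tilde H\le H_1$ of bounded index acting trivially on $H_c^*(F_j;\Orientation)$ for every $j$. Each $F_j$ is a (generally disconnected) cohomology manifold of dimension at most $d-2$ whose total cohomology is bounded in terms of $d,B$, but the image of $H_1\to\GL(H_c^*(F_j;\Orientation))$ need not have order bounded independently of $p$ (a cyclic subgroup of order dividing $p^n-1$ can act faithfully on $\BF_p^n$), so one must exploit the geometry. The natural strategy is induction on $d$: first pass to the bounded-index subgroup of $H_1$ stabilising every connected component of every $F_j$ (index at most the total number of components, controlled by $\dim H_c^0$), and then invoke the inductive hypothesis on each component in a slightly strengthened form that accommodates cohomology manifolds rather than only topological ones. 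Composing the bounded-index losses through the initial passage $H\to H_1$, the at most $\tilde C(d,B)$ removal steps, and the inductive applications then produces the final bound $n_3(d,B)$.
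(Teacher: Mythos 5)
Your first three steps are essentially correct and parallel the paper's argument: the paper also gets $G$-invariance from $H$ normalizing $P$, bounds $|\Stab(P,M)|\le\tilde C(d,B)$ from $p>\tilde C(d,B)$ via \fref{prop:number-of-stapilizers-is-bounded}, and obtains the inequality \fref{eq:lemma_6.1_ineq} and connectedness by peeling off one fixed-point set at a time with \fref{prop:fixed-point-set-is-manifold} and \fref{prop:long-exact-sequence-of-closed-subset}. (The paper organizes this as an induction on $|\Stab(P,M)|$ in which the \emph{trivial-action hypothesis itself} is re-established on the smaller manifold $V=M\setminus M^L$ before recursing, rather than your scheme of chasing isotypic parts of a fixed chain back to the strata; given coprimality and Maschke these bookkeeping schemes are equivalent, \emph{provided} the key triviality statement at each removal step can be proved.)

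That key statement is exactly your Step 4, which you correctly identify as the main obstacle but do not resolve, and the route you sketch would fail. An induction on $d$ cannot be fed the strata $F_j$: the lemma's inductive hypothesis requires as \emph{input} that the ambient action on compactly supported cohomology is trivial, which is precisely what you would need to produce for $F_j$; and no Minkowski-type argument (as in \fref{lem:Cohomologically-trivial-actions-Minkowski}) is available because only the mod-$p$ cohomology of $F_j$ is controlled, not its integral homology --- your own example $\BZ_{p^n-1}<\GL(n,\BF_p)$ shows mod-$p$ bounds alone cannot bound the image. The paper's resolution uses two tools absent from your sketch. First, after reducing via Borel's fixed point formula (\fref{prop:Borel-fixpoint-formula}) to a stabilizer $L\isom\BZ_p$ (\fref{lem:Cohomologically-free-action-on-complement-of-fixedpoint-set}), it bounds the index of the centralizer of $L$ in $H$ by a weight comparison (\fref{lem:Zp-by-cyclic-fixed-point}): $H$ acts on $H^n(L;\BF_p)$ via $\lambda^{-\lceil n/2\rceil}$, where $\lambda\colon H\to\BF_p^*=\Aut(L)$, and the edge-homomorphism isomorphism of \fref{lem:Omega_M^k} between $H^{n+d-k}(L;\BF_p)\otimes H_c^k(F_0;\Orientation)$ and $H^{n}(L;\BF_p)\otimes H_c^d(M_0;\Orientation)$ forces $\lambda_0^{\lceil (d-k)/2\rceil}=1$ on a component-preserving subgroup; since $k<d$ this bounds $|H:\ker\lambda_0|$. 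Second, once $\tilde H$ commutes with $L$ so that $L\times\tilde H$ is a genuine direct product, the equivariant refinement of Smith theory (\fref{lem:Smith-theory-generalised}) bounds the multiplicity $\#_I$ of every irreducible $\BF_p\tilde H$-module $I$ in $H_c^*(M^L;\Orientation)$ and in $H_c^*(M\setminus M^L;\Orientation)$ by its multiplicity in $H_c^*(M;\Orientation)$; combined with semisimplicity this converts ``trivial on $H_c^*(M)$'' into ``trivial on $H_c^*(M\setminus M^L)$,'' keeping the induction alive. Without the weight comparison and the multiplicity version of Smith theory (or substitutes for them), your Step 4 has no proof, so the proposal is incomplete at its crucial point.
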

\begin{rem}
  A variant of the above inequality is proved in \cite{CMPS}.
  However, we prove the rest of this statement by induction,
  and we need this slightly stronger inequality to make the induction
  work.
\end{rem}

For differentiable action of a finite group on a manifold,
each fixed point has invariant neighborhoods homeomorphic to a ball.
For continuous actions we use the following replacement.

\begin{defn} [{\cite[Definition~I-4.4]{borel1960seminar}}]
  \label{def:adapted-neighbourhood}
  Let $M$ be a topological $d$-manifold
  and $U\subseteq M$ a connected orientable open subset.
  A connected open subset $V\subseteq U$ is \emph{adapted to $U$}
  if the induced homomorphism
  $H^i(V;\Orientation)\to H^i(U;\Orientation)$
  is an isomorphism for $i=d$, and zero for $i\ne d$.
\end{defn}

\begin{lem} \label{lem:invariant-adapted-neighbourhoods-exists}
  Let $G$ be a finite group, $M$ a $d$-dimensional $G$-manifold,
  and $x\in M^G$ a fixed point.
  Then each orientable open neighborhood $U$ of $x$
  contains a $G$-invariant orientable connected open neighborhood of $x$
  adapted to $U$.
\end{lem}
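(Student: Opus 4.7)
The plan is to produce $V$ as the connected component of $x$ inside a finite intersection of $G$-translated chart balls. Choose a coordinate chart $\phi\colon W\to\BR^d$ centered at $x$, where $W\subseteq U$ is an open neighborhood of $x$, and for $r>0$ set $B_r=\phi^{-1}\bigl(\{y\in\BR^d:|y|<r\}\bigr)$; these form a decreasing neighborhood basis of $x$ in $W$ consisting of contractible, connected, orientable open sets. By continuity of the action, finiteness of $G$, and $gx=x$ for every $g\in G$, we can shrink $r$ so that $gB_r\subseteq W$ for every $g\in G$. Put
\[
V^{\ast}=\bigcap_{g\in G}gB_r,
\]
which is open as a finite intersection, $G$-invariant by construction, and contains $x$. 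Let $V$ be the connected component of $V^{\ast}$ containing $x$. Then $V$ is open since $M$ is locally connected, $G$-invariant since $G$ fixes $x$ and permutes connected components of $V^{\ast}$, connected by construction, and orientable because $V\subseteq B_r\subseteq W\cong\BR^d$.

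To verify that $V$ is adapted to $U$, I appeal to Poincar\'e duality (\fref{prop:generalised-Poincare-duality}), whose naturality square identifies the map $H^{i}(V;\Orientation)\to H^{i}(U;\Orientation)$ with the inclusion-induced map $H_{d-i}(V;\BF_p)\to H_{d-i}(U;\BF_p)$. For $i=d$ both sides are $\BF_p$ (since $V$ and $U$ are connected), and the corresponding map on $H_0$ is an isomorphism. For $i<d$ the inclusion $V\hookrightarrow U$ factors through the contractible set $B_r$, so the map on $H_{d-i}$ (with $d-i\ge 1$) factors through $H_{d-i}(B_r;\BF_p)=0$ and therefore vanishes; by naturality of Poincar\'e duality the corresponding cohomology map vanishes as well. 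For $i>d$ both groups vanish by dimensional reasons.

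The main conceptual difficulty is to arrange connectedness, openness, orientability, and $G$-invariance of $V$ simultaneously; the finite-intersection trick combined with passing to the connected component of the fixed point resolves this, after which adaptedness is reduced by Poincar\'e duality to the fact that $V$ sits inside the contractible chart ball $B_r$.
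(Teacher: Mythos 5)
Your proof is correct and follows essentially the same route as the paper: the paper also takes a neighborhood $W\subseteq U$ homeomorphic to $\BR^d$, intersects its $G$-translates (using $gx=x$), and passes to the connected component of $x$. The only difference is that the paper simply asserts that every connected open subset of $W$ is adapted to $U$, whereas you spell out this verification via the naturality of Poincar\'e duality and the contractibility of the chart ball, which is exactly the intended justification.
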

\begin{proof}
  Let $W\subseteq U$ be an open neighborhood of $x$ homeomorphic to $\BR^d$.
  Then every connected open subset in $W$ is adapted to $U$.
  Let $V$ be the intersection of all $G$-translates of $W$,
  and $V^0\subseteq V$ the connected component containing $x$.
  It is $G$-invariant, orientable, connected, and it is adapted to $U$.
\end{proof}

Borel in \cite[Lemma~V-2.1]{borel1960seminar} identified a direct
summand of the equivariant cohomology of a connected oriented
$d$-manifold $M$ which is originated from $H_c^d(M;\BF_p)$.
We need a slightly more precise information than he stated,
and we need to generalize this to non-oriented manifolds.

\begin{prop}\label{prop:edge-like-homomorphism}
  Let $G$ be a finite group and $M$ a $d$-dimensional $G$-manifold.
  The spectral sequence in \fref{prop:Leray-spectral-sequence}
  gives us an edge homomorphism
  \[
    \CE_M\colon
    H_{G,c}^*(M;\Orientation) \longrightarrow
    H^{*-d}(G; H_c^d(M;\Orientation)),
  \]
  which is a natural transformation with respect to
  $G$-equivariant open embeddings.
  If $M$ is connected and the $G$-action has a fixed point, then 
  \begin{equation}\label{eq:isomorphism}
  H^{*}(G; H_c^d(M;\Orientation))\isom H^{*}(G;\BF_p)\otimes H_c^d(M;\Orientation),
  \end{equation} 
  and $\CE_M$ is surjective.
\end{prop}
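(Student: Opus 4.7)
My plan is to construct $\CE_M$ as the standard top-row edge homomorphism in the Borel spectral sequence from \fref{prop:Leray-spectral-sequence}, then to handle both additional claims by identifying $H_c^d(M;\Orientation)$ with $\BF_p$ and exploiting naturality along the inclusion of a small $G$-invariant neighborhood of the fixed point.

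\emph{Construction of $\CE_M$.} Since $M$ is a $d$-manifold, $H_c^j(M;\Orientation)=0$ for $j>d$, so the spectral sequence $\BE_t^{i,j}\Rightarrow H_{G,c}^{i+j}(M;\Orientation)$ is concentrated in rows $0\le j\le d$. No differential on any page can target the top row $j=d$ (its source would lie in row $d+r-1>d$), hence $\BE_\infty^{i,d}\subseteq\BE_2^{i,d}=H^i\big(G;H_c^d(M;\Orientation)\big)$ is a subobject. Since $j=d$ is also the topmost row, $\BE_\infty^{n-d,d}$ is the top graded piece of the abutment filtration, hence a quotient of $H_{G,c}^n(M;\Orientation)$. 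I would define $\CE_M$ by composing these:
\[
H_{G,c}^n(M;\Orientation)\twoheadrightarrow \BE_\infty^{n-d,d}\hookrightarrow H^{n-d}\big(G;H_c^d(M;\Orientation)\big),
\]
and naturality for $G$-equivariant open embeddings is inherited directly from the naturality of the spectral sequence.

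\emph{Isomorphism \eqref{eq:isomorphism}.} By \fref{prop:Hd-cohomology-manifold}, connectedness of $M$ gives $H_c^d(M;\Orientation)\isom\BF_p$, with the $G$-action induced from the (necessarily trivial) permutation action on the set of components. Thus $H_c^d(M;\Orientation)$ is a trivial one-dimensional $\BF_p G$-module, and \eqref{eq:isomorphism} is immediate (no fixed point needed).

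\emph{Surjectivity.} Let $x\in M^G$ be a fixed point and let $U\subseteq M$ be an orientable coordinate ball around $x$. By \fref{lem:invariant-adapted-neighbourhoods-exists}, I would choose a $G$-invariant, connected, orientable open neighborhood $V\subseteq U$ of $x$ adapted to $U$. Naturality of $\CE$ applied to $V\hookrightarrow M$ yields the commutative square
\[
\xymatrix@C20pt{
H_{G,c}^n(V;\Orientation)\ar[r]\ar[d]_{\CE_V} & H_{G,c}^n(M;\Orientation)\ar[d]^{\CE_M}\\
H^{n-d}\big(G;H_c^d(V;\Orientation)\big)\ar[r] & H^{n-d}\big(G;H_c^d(M;\Orientation)\big)
}
\]
so it suffices to prove both the left vertical arrow $\CE_V$ and the bottom horizontal arrow are surjective. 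The bottom arrow is induced by the inclusion of a nonempty connected open subset $V$ of the connected manifold $M$; by \fref{prop:Hd-cohomology-manifold} and its functoriality this inclusion induces the canonical isomorphism $\BF_p\to\BF_p$ on $H_c^d(-;\Orientation)$, and applying $H^{n-d}(G;-)$ preserves the isomorphism. For $\CE_V$, the adapted condition is designed to guarantee that $H_c^*(V;\Orientation)$ is concentrated in degree $d$; once this is known, the spectral sequence for $V$ has only its top row nonzero, collapses at $\BE_2$, and $\CE_V$ is an isomorphism. The main technical obstacle is verifying precisely that the adapted condition translates to vanishing of $H_c^j(V;\Orientation)$ for $j<d$; if a single such $V$ is insufficient, I would pass to the direct limit over a cofinal system of adapted $G$-invariant neighborhoods of $x$, using that compactly supported sheaf cohomology commutes with such limits of open subsets.
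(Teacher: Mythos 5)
Your construction of $\CE_M$ and your proof of \fref{eq:isomorphism} coincide with the paper's: the edge map is the composition $H_{G,c}^n(M;\Orientation)\twoheadrightarrow\BE_\infty^{n-d,d}\hookrightarrow\BE_2^{n-d,d}$, naturality comes from naturality of the spectral sequence, and triviality of the $G$-action on $H_c^d(M;\Orientation)\isom\BF_p$ follows from connectedness alone (you are right that no fixed point is needed for \fref{eq:isomorphism}). The gap is in the surjectivity step, and it sits exactly where you flagged your ``main technical obstacle.'' Being adapted (Definition~\ref{def:adapted-neighbourhood}) is a condition on the \emph{induced homomorphisms} $H^i(V;\Orientation)\to H^i(U;\Orientation)$ --- isomorphism in degree $d$, zero otherwise --- not on the groups $H_c^*(V;\Orientation)$ themselves. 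In fact adaptedness is nearly vacuous for a single neighborhood: as the paper's proof of \fref{lem:invariant-adapted-neighbourhoods-exists} notes, \emph{every} connected open subset of a coordinate ball is adapted to $U$, so the $G$-invariant $V$ it produces (a connected component of an intersection of $G$-translates of a ball) can have nontrivial compactly supported cohomology in many degrees below $d$. For merely continuous actions there need be no $G$-invariant ball at all --- that is precisely why Borel introduced adapted neighborhoods --- so your claim that the spectral sequence for $V$ is concentrated in the top row is unjustified, and with it the collapse argument for $\CE_V$. Your fallback does not repair this: compactly supported cohomology commutes with \emph{increasing} unions of open sets, whereas a neighborhood basis at $x$ is a \emph{decreasing} system, and the extension-by-zero maps $H_c^*(V')\to H_c^*(V)$ for $V'\subseteq V$ do not compute $H_c^*$ of the (non-open) intersection, so no limit over shrinking adapted neighborhoods yields the desired vanishing.

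The paper's proof replaces your single $V$ by a chain: using \fref{lem:invariant-adapted-neighbourhoods-exists} repeatedly, it builds $G$-invariant connected orientable neighborhoods $U=U_0\supseteq U_1\supseteq\dots\supseteq U_{2d}$ of $x$, each adapted to the previous one, and invokes Borel's comparison lemma \cite[Lemma~V-2.1]{borel1960seminar}, which extracts from such a chain of length $2d$ a subspace $W\le H_{G,c}^*(U;\Orientation)$ that $\CE_U$ maps isomorphically onto $H^{*-d}(G;\BF_p)\otimes H_c^d(U;\Orientation)$; the point of the chain is to kill, one stage at a time, the lower-row contributions that no single invariant neighborhood can be assumed to lack. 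Once $\CE_U$ is surjective, your concluding commutative square is exactly the paper's, with the right-hand vertical map an isomorphism by \fref{prop:Hd-cohomology-manifold}. So your outer frame (naturality plus restriction to an invariant neighborhood of a fixed point) is correct, but the core of the surjectivity proof requires Borel's adapted-chain lemma, not concentration of $H_c^*(V;\Orientation)$ in degree $d$.
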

\begin{proof} By \fref{prop:Leray-spectral-sequence}, we have 
$
	\BE_2^{*,d}= H^*\big(G;H_c^d(M;\Orientation)\big)
$,
	and  $\BE_2^{*,j}=0$ for all $j>d$.	This implies that $\CE_M$ can be defined as the composition
	\[ H_{G,c}^*(M;\Orientation) \longrightarrow 	\BE_{\infty}^{*-d,d} \longrightarrow \BE_{2}^{*-d,d} =
	H^{*-d}(G; H_c^d(M;\Orientation)).\]
	Naturality of $\CE_M$ follows from the naturality of the spectral sequence.
  
  If $M$ is connected, then the $G$-action on $H_c^d(M;\Orientation)$ is trivial by \fref{prop:generalised-Poincare-duality}, therefore \fref{eq:isomorphism} holds.

  Now assume that $M$ is connected and $M^G\neq \emptyset$. Let $x\in M^G$ be a fixed point and $U\subseteq M$
  a connected orientable open neighborhood of $x$.
  By repeated use of \fref{lem:invariant-adapted-neighbourhoods-exists}, construct connected orientable open $G$-invariant neighborhoods
  $U=U_0\supseteq U_1\supseteq\dots\supseteq U_{2d}$ of $x$
  such that each $U_i$ is adapted to $U_{i-1}$, and apply \cite[Lemma~V-2.1]{borel1960seminar} to this sequence.
  The lemma  states that
  there is a subspace $W\le H_{G,c}^*(U;\Orientation)$
  isomorphic to $H^{*-d}(G;\BF_p)\otimes H_c^d(U;\Orientation)$
  and it follows from its proof that $\CE_U$ maps $W$
  isomorphically onto $H^{*-d}(G;\BF_p)\otimes H_c^d(U;\Orientation)$.
  In particular, $\CE_U$ is surjective.
  Consider now the diagram
  \[
    \xymatrix@C50pt{
      H_{G,c}^*(M;\Orientation) \ar[r]_(.4){\CE_M}&
      H^{*-d}(G;\BF_p)\otimes H_c^d(M;\Orientation)
      \\
      \llap{$W\le\ $}
      H_{G,c}^*(U;\Orientation) \ar@{ -{>>}}[r]_(.4){\CE_U}
      \ar[u]^\phi&
      H^{*-d}(G;\BF_p)\otimes H_c^d(U;\Orientation),
      \ar[u]^\psi_\isom
    }
  \]
  where $\phi$ and $\psi$ are the natural homomorphisms induced by
  the inclusion $U\subseteq M$. The map 
  $\psi$ is an isomorphism by \fref{prop:Hd-cohomology-manifold},
  hence $\CE_M\circ\phi=\psi\circ\CE_U$ is surjective.
  This implies that $\CE_M$ is surjective.
\end{proof}

We consider the actions of the group $G\isom\BZ_p$.
Let $M$ be a $G$-manifold.
For differentiable manifolds and smooth actions,
the $G$-action on the normal bundle
of $M^G$ would give us useful information
(see e.g. \cite[Lemma 3.2]{Riera_spheres}).
In our situation the normal bundle is not available.
Instead, we use
the homomorphisms $\Omega_M^k$ defined below.

\begin{lem} \label{lem:Omega_M^k}
  Consider the group $G\isom\BZ_p$.
  Let $M$ be a $d$-dimensional $G$-manifold,
  and suppose that $F=\Fix{M}{G}$ is non-empty.
  For each $n\ge1$ and each $k\le d$, we have:
  \begin{enumerate}[\indent(a)]
  \item \label{item:17}
    The restriction homomorphism
    $H_{G,c}^{d+n}(M;\Orientation)
    \stackrel{\res_M}\longrightarrow
    H_{G,c}^{d+n}(F;\Orientation)$
    is an isomorphism.
  \item \label{item:18}
    In the following diagram $\CK_F^k$ is the natural inclusion
    induced by the K\"unneth isomorphism
    (see \fref{prop:free-action-cohomology}.\fref{item:9}),
    $\CE_M$ is the edge homomorphism
    defined in \fref{prop:edge-like-homomorphism},
    and $\Omega_M^k=\CE_M\circ\res_M^{-1}\circ\CK_F^k$.
    $$
    \xymatrix@C48pt{
      H^{n+d-k}(G;\BF_p)\otimes H_c^k(F;\Orientation)
      \ar@{^(->}[d]^{\CK_F^k}
      \ar@/_25pt/@{.>}[r]^{\Omega_{M}^k}
      &
      H^{n}(G; H_c^d(M;\Orientation))
      \\
      H_{G,c}^{n+d}(F;\Orientation)
      \ar[r]^{\res_M^{-1}}
      &
      H_{G,c}^{n+d}(M;\Orientation)
      \ar[u]^{\CE_M}
    }
    $$
    The diagram depends on $M$ functorially
    with respect to
    $G$-equi\-vari\-ant open embeddings.
  \item \label{item:19}
    Suppose that $F_0$ is a
    $k$-dimensional non-empty connected component of $F$,
    and let $M_0$ denote the connected component of $M$ containing
    $F_0$.
    Then $\Omega_M^k$ maps the direct summand
    $$
    H^{n+d-k}(G;\BF_p)\otimes H_c^k(F_0;\Orientation)\le
    H^{n+d-k}(G;\BF_p)\otimes H_c^k(F;\Orientation)
    $$
    isomorphically onto the direct summand
    $$
    H^{n}(G;\BF_p)\otimes H_c^d(M_0;\Orientation)\le
    H^{n}(G; H_c^d(M;\Orientation)).
    $$
  \end{enumerate}
\end{lem}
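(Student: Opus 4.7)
For part (a), I would apply the long exact sequence of \fref{prop:long-exact-sequence-of-closed-subset} to the decomposition $M = F \sqcup U$ with $U = M \setminus F$. Since $G \isom \BZ_p$ acts freely on $U$, \fref{prop:free-action-cohomology}(a) identifies $H_{G,c}^i(U;\Orientation)$ with $H_c^i(U/G;\Orientation_{U/G})$; the orbit space $U/G$ is a topological $d$-manifold, so by Poincar\'e duality (\fref{prop:generalised-Poincare-duality}) these groups vanish in degrees $>d$. Feeding this into the long exact sequence immediately yields that $\res_M$ is an isomorphism for every $n \ge 1$. With (a) in hand, (b) is formal: $\Omega_M^k = \CE_M \circ \res_M^{-1} \circ \CK_F^k$ is a well-defined composition, and naturality with respect to $G$-equivariant open embeddings follows from the naturality of each factor --- $\CK_F^k$ from the K\"unneth formula, $\res_M$ because such embeddings preserve fixed-point loci, and $\CE_M$ from the functoriality of the spectral sequence in \fref{prop:Leray-spectral-sequence}.

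For part (c), I would first reduce to a local statement by functoriality. By \fref{prop:Hd-cohomology-manifold}, both $H_c^d(M_0;\Orientation)$ and $H_c^k(F_0;\Orientation)$ are one-dimensional over $\BF_p$, so both summands in the statement are one-dimensional and it suffices to prove that $\Omega_{M_0}^k$ does not vanish on the $F_0$-summand. Since $F_0$ and $F \setminus F_0$ are disjoint closed subsets of $M_0$, I would choose a $G$-invariant open $V' \subseteq M_0$ containing $F_0$ with $V' \cap F = F_0$, and let $V$ be the connected component of $V'$ containing $F_0$ (which is connected, since $F_0$ is). The inclusion $V \hookrightarrow M_0$ then induces an isomorphism on the top group $H_c^d$, so by the naturality from (b) it is enough to show that $\Omega_V^k$ is nonzero.

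At this point I would imitate the argument used in the proof of \fref{prop:edge-like-homomorphism}, which itself rests on Borel's \cite[Lemma V-2.1]{borel1960seminar}. Pick $x \in F_0$ and, iterating \fref{lem:invariant-adapted-neighbourhoods-exists}, construct a nested chain $V = V_0 \supseteq V_1 \supseteq \dots \supseteq V_{2d}$ of $G$-invariant orientable connected open neighborhoods of $x$, each adapted to its predecessor in $M$; since $F_0$ is itself a cohomology manifold over $\BF_p$ by \fref{prop:fixed-point-set-is-manifold}, I would arrange in parallel that $V_i \cap F_0$ is adapted in $F_0$. Borel's construction then produces an explicit class in $H_{G,c}^{n+d}(V;\Orientation)$ that $\CE_V$ sends to a prescribed generator of $H^n(G;\BF_p) \otimes H_c^d(V;\Orientation)$. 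The main obstacle will be to verify that this class is the image under $\res_V^{-1}$ of a generator of the specific $(n+d-k,\,k)$-summand $H^{n+d-k}(G;\BF_p) \otimes H_c^k(V \cap F_0;\Orientation)$, rather than some other filtration piece or zero; for this I expect to run Borel's construction on the pair $(V_i,\, V_i \cap F_0)$ simultaneously, using that the top row of the spectral sequence of $V_i \cap F_0$ sits in degree $k$ while that of $V_i$ sits in degree $d$, and that the restriction maps built into the chain identify the corresponding top classes. Once this compatibility is confirmed, $\Omega_V^k$ sends a generator to a generator and the proof is complete.
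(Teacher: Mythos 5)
Your parts (a) and (b) are correct and essentially coincide with the paper's argument: the paper likewise derives (a) from the long exact sequence of \fref{prop:long-exact-sequence-of-closed-subset} together with the vanishing of $H_{G,c}^{i}\big(M\setminus F;\Orientation\big)\isom H_c^{i}\big((M\setminus F)/G;\Orientation\big)$ in degrees $i>d$ (it merely organizes this over $G$-orbits of connected components), and (b) is the same formal naturality argument. Your reduction at the start of (c) --- one-dimensionality of both summands via \fref{prop:Hd-cohomology-manifold}, and localization to a $G$-invariant connected $V$ with $V\cap F=F_0$ --- is also sound and parallels the paper's concluding diagram.

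The genuine gap is exactly what you label ``the main obstacle'': the nonvanishing of $\Omega_V^k$ on the $(n+d-k,\,k)$-summand is the entire content of part (c), and you leave it as an expectation. The difficulty is real and not a matter of bookkeeping: $\res_V^{-1}$ carries the K\"unneth summand $H^{n+d-k}(G;\BF_p)\otimes H_c^k(F_0;\Orientation)$ of $H_{G,c}^{n+d}(F_0;\Orientation)$ into $H_{G,c}^{n+d}(V;\Orientation)$ with no a priori compatibility with the spectral-sequence filtration there, so the image of a generator could have zero component in the top filtration quotient computed by $\CE_V$. Borel's Lemma V-2.1, as packaged in \fref{prop:edge-like-homomorphism}, only produces classes on the ambient manifold with prescribed edge image and says nothing about how they restrict to the fixed set; making ``run Borel's construction on the pair $(V_i,\,V_i\cap F_0)$ simultaneously'' precise would amount to proving a relative refinement of that lemma, which you have not supplied.

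The paper closes this point by an image-comparison device that avoids tracking any explicit class. Inside the cohomology manifold $F_0$ it chooses an adapted pair of opens $V'\subset V\subseteq F_0$, meaning the inclusion induces an isomorphism on $H_c^k$ and zero on every other $H_c^j$, and thickens them to $G$-invariant connected opens $U'\subseteq U\subseteq M$ with $U\cap F=V$ and $U'\cap F=V'$. By the K\"unneth description of the trivial action (\fref{prop:free-action-cohomology}), this adaptedness forces $\im(\CK_V^k)$ to equal the image of $H_{G,c}^{n+d}(V';\Orientation)\to H_{G,c}^{n+d}(V;\Orientation)$; the functoriality established in (b) then yields $\im(\Omega_U^k)=\im(\gamma\circ\CE_{U'})$, where $\gamma$ is the isomorphism on $H^{n}(G;\BF_p)\otimes H_c^d$ induced by $U'\subseteq U$. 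Since $\CE_{U'}$ is surjective by \fref{prop:edge-like-homomorphism} --- the single place where Borel's lemma enters --- $\Omega_U^k$ is a surjection between one-dimensional spaces, hence an isomorphism, and the globalization you already set up finishes the proof. In short, the missing idea is not finer control of Borel's cocycle, but a second, smaller invariant neighborhood whose fixed part isolates the degree-$k$ summand homologically.
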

\begin{rem}\label{rem:c-makes-sense}
  We show that
  \fref{item:19} is meaningful.
  $G$ maps $M_0$ into itself, hence
  $H^n\big(G;H_c^d(M_0;\Orientation)\big)$ is a direct summand of
  $H^{n}(G; H_c^d(M;\Orientation))$.
  Moreover, by \fref{prop:Hd-cohomology-manifold}
  the $G$-action on $H_c^d(M_0;\Orientation)$ is trivial, so
  $H^n\big(G;H_c^d(M_0;\Orientation)\big)\isom
  H^{n}(G;\BF_p)\otimes H_c^d(M_0;\Orientation)$.
  This shows that $H^{n}(G;\BF_p)\otimes H_c^d(M_0;\Orientation)$
  is indeed a direct summand of
  $H^{n}(G; H_c^d(M;\Orientation))$.
  The same argument shows that
  $H^{n}(G;\BF_p)\otimes H_c^d(F_0;\Orientation)$
  is a direct summand of
  $H^{n}(G; H_c^d(F;\Orientation))$.
\end{rem}
\begin{proof}[Proof of \fref{lem:Omega_M^k}]
  Let $M_*\subseteq M$ be the $G$-orbit of a connected component of
  $M$. If $M_*\cap F=\emptyset$, then
  $H_{G,c}^{n+d}(M_*;\Orientation)=0$
  by \fref{prop:free-action-cohomology}\fref{item:8},
  and so $\res_{M_*}=0$.
  Otherwise
  $M_*$ is connected and $M_*\setminus F$ is open, hence 
  $H_{G,c}^{n+d+1}(M_*\setminus F;\Orientation)=
  H_{G,c}^{n+d}(M_*\setminus F;\Orientation)=0$
  by \fref{prop:free-action-cohomology}\fref{item:8},
  so
  \fref{prop:long-exact-sequence-of-closed-subset}
  implies that $\res_{M_*}$ is an isomorphism.
  Since $\res_M$ is the direct sum of the isomorphisms $\res_{M_*}$
  for various $M_*$, \fref{item:17} follows.

  Propositions~\ref{prop:free-action-cohomology}\fref{item:9},
  \ref{prop:long-exact-sequence-of-closed-subset},
  and \ref{prop:edge-like-homomorphism}
  state that $\CK_F^k$, $\res_M$, and $\CE_M$
  are natural transformations.
  This proves \fref{item:18}.

  Finally we prove \fref{item:19}.
  By \fref{prop:fixed-point-set-is-manifold} $F_0$ is a cohomology manifold.
  Let $V\subseteq F_0$ be a connected orientable open subset,
  and let $V'\subset V$ be an open subset such that
  the induced homomorphism
  \begin{equation}
    \label{eq:2}
    H_c^j(V';\Orientation)\to H_c^j(V;\Orientation)
    \text{ \ is }
    \begin{cases}
      \text{an isomorphism for \ } j=k, \\
      0 \text{ \ \ otherwise.}
    \end{cases}    
  \end{equation}
  Let $W\subseteq M$ be an open subset such that $W\cap F = V$,
  and let $U$ be the connected component of $\bigcap_{g\in G}gW$
  containing $V$.
  Similarly, let $W'\subseteq W$ be an open subset
  such that $W'\cap F = V'$,
  and let $U'$ be the connected component of $\bigcap_{g\in G}gW'$
  containing $V'$.
  Then $U'\subseteq U$ are $G$-invariant connected open submanifolds,
  hence the inclusion map induces an isomorphism
  \[
    H_c^d(U';\Orientation)
    \stackrel\isom\longrightarrow
    H_c^d(U;\Orientation)\isom\BF_p.
  \]
  Applying \fref{item:17}, \fref{item:18}, and
  \fref{rem:c-makes-sense}
  to $U$ and $U'$, we obtain the following diagram.
  \[
    \xymatrix@C48pt{
      H_{G,c}^{n+d}(V';\Orientation)
      \ar[d]^\alpha
      \ar@(l,l)@<-25pt>[ddd]^{\res_{U'}^{-1}}_\isom
      &
      H^{n+d-k}(G;\BF_p)\otimes H_c^k(V';\Orientation)
      \ar@{_(->}[l]_(.6){\CK_{V'}^k}
      \ar[d]^\isom
      \\
      H_{G,c}^{n+d}(V;\Orientation)
      \ar[d]^{\res_U^{-1}}_\isom
      &
      H^{n+d-k}(G;\BF_p)\otimes H_c^k(V;\Orientation)
      \ar@{_(->}[l]_(.6){\CK_V^k}
      \ar@/_10pt/@{ .>}[d]^{\Omega_{U}^k}
      \\
      H_{G,c}^{n+d}(U;\Orientation)
      \ar@{ -{>>}}[r]^(.4){\CE_U}
      &
      H^{n}(G;\BF_p)\otimes H_c^d(U;\Orientation)
      \\
      H_{G,c}^{n+d}(U';\Orientation)
      \ar@{ ->>}[r]^(.4){\CE_{U'}}
      \ar[u]_\beta
      &
      H^{n}(G;\BF_p)\otimes H_c^d(U';\Orientation)
      \ar[u]^\gamma_\isom
    }
  \]
  \fref{prop:free-action-cohomology}\fref{item:9} and \fref{eq:2} imply that
  \[
    \im(\CK_V^k)=\im(\alpha),
  \]
  hence
  \[
    \im(\res_U^{-1}\circ\CK_V^k)=\im(\res_U^{-1}\circ\alpha)=\im(\beta),
  \]
  and therefore
  \[
    \im(\Omega_{U}^k) = \im(\CE_U\circ\res_U^{-1}\circ\CK_V^k) =
    \im(\CE_U\circ\beta) = \im(\gamma\circ\CE_{U'}).
  \]
  But $\CE_{U'}$ is surjective by \fref{prop:edge-like-homomorphism},
  hence $\Omega_U^k$ is surjective as well.
  Now $\Omega_U^k$ is a surjective homomorphism between
  one-dimensional spaces, so it is an isomorphism.

  In the following diagram, the vertical arrows
  are induced by the inclusions $V\subseteq F_0\subseteq F$
  and $U\subseteq M_0\subseteq M$.
  \[
    \xymatrix@C28pt{
      H^{n+d-k}(G;\BF_p)\otimes H_c^k(F;\Orientation)
      \ar[r]^(.4){\Omega_{M}^k}
      &
      H^{n}(G; H_c^d(M;\Orientation))
      \isom
      H^{n}(G;\BF_p)\otimes H_c^d(M;\Orientation)
      \\
      H^{n+d-k}(G;\BF_p)\otimes H_c^k(F_0;\Orientation)
      \ar@{^{(}->}[u]
      &
      H^{n}(G;\BF_p)\otimes H_c^d(M_0;\Orientation)
      \ar@{^{(}->}[u]
      \\
      H^{n+d-k}(G;\BF_p)\otimes H_c^k(V;\Orientation)
      \ar[u]^\phi
      \ar[r]^{\Omega_{U}^k}_\isom
      &
      H^{n}(G;\BF_p)\otimes H_c^d(U;\Orientation).
      \ar[u]^\psi
    }
  \]
  By \fref{prop:Hd-cohomology-manifold}, $\phi$ and $\psi$ are isomorphisms.
  This proves \fref{item:19}.
\end{proof}

\begin{defn}
  Let $W$ and $I$ be finite dimensional ${\BF}H$-modules for some field $\BF$ and a finite group $H$, and assume that $I$ is irreducible. We shall denote by $\#_IW$ the number of occurrences of $I$ among the composition factors of $W$.
\end{defn}

Smith theory can be used to study the topology of the fixed point set
of a $p$-group acting on a manifold
(see e.g. \fref{prop:fixed-point-set-is-manifold}). 
We need the following generalization when the action of another group
$H$ is taken into account.

\begin{lem}\label{lem:Smith-theory-generalised}
  Let $G=P\times H$ be the direct product of a finite $p$-group $P$
  for some prime $p$ and a finite group  $H$.
  Let $M$ be a $G$-manifold.
  Then $\Fix{M}{P}$ is an $H$-invariant union of disjoint cohomology manifolds over $\BF_p$,
  and for any irreducible $H$-module $I$,
  we have
  $$
  \#_IH_c^*\big(\Fix{M}{P};\Orientation\big) \le
  \#_IH_c^*(M;\Orientation),
  $$
  $$
  \#_IH_c^*\big(M\setminus\Fix{M}{P};\Orientation\big) \le
  2\,\#_IH_c^*(M;\Orientation).
  $$
\end{lem}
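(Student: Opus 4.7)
\emph{Overall strategy.} Since $G=P\times H$ is a direct product, $H$ centralizes $P$, so $H$ preserves the closed set $\Fix{M}{P}$, giving its $H$-invariance; the cohomology-manifold structure on its connected components is already provided by \fref{prop:fixed-point-set-is-manifold}. Only the two numerical inequalities require new work, and the plan is an $H$-equivariant refinement of the standard Smith-theoretic argument underlying \fref{prop:fixed-point-set-is-manifold}. I will prove the first inequality by induction on $|P|$, allowing $M$ to be an $\BF_p$-cohomology manifold throughout (needed because the fixed locus of an intermediate subgroup of $P$ is in general only a cohomology manifold), and then deduce the second inequality from the first.

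\emph{The case of a central $\BZ_p$ subgroup.} Choose $P_0\le\CZ(P)$ of order $p$; it is automatically $G$-invariant. Set $F_0=\Fix{M}{P_0}$ and $U_0=M\setminus F_0$, and combine three $H$-equivariant ingredients. First, $P_0$ acts freely on $U_0$, so by \fref{prop:free-action-cohomology} the equivariant cohomology $H_{P_0,c}^{*}(U_0;\Orientation)\isom H_c^{*}(U_0/P_0;\Orientation)$ vanishes in degrees exceeding $d=\dim(M)$; together with the $H$-equivariant long exact sequence of \fref{prop:long-exact-sequence-of-closed-subset}, this yields an $H$-module isomorphism $H_{P_0,c}^{n}(M;\Orientation)\isom H_{P_0,c}^{n}(F_0;\Orientation)$ for every $n>d$. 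Second, $P_0$ acts trivially on $F_0$, so the K\"unneth formula of \fref{prop:free-action-cohomology} together with the $H$-equivariant identification $\Orientation[M]\big|_{F_0}\isom\Orientation[F_0]$ gives $H_{P_0,c}^{n}(F_0;\Orientation)\isom\bigoplus_{i+j=n}H_c^{i}(F_0;\Orientation)\otimes H^{j}(P_0;\BF_p)$; since $H$ acts trivially on each one-dimensional factor $H^{j}(P_0;\BF_p)$, one has $\#_I H_{P_0,c}^{n}(F_0;\Orientation)=\#_I H_c^{*}(F_0;\Orientation)$ for all $n\ge d$. Third, the Borel spectral sequence of \fref{prop:Leray-spectral-sequence} is $H$-equivariant, and the essential algebraic observation is that each $H^{i}(P_0;W)$ is realised via the standard $2$-periodic resolution as an $H$-invariant subquotient of $W$ (explicitly $\ker(g-1)/\im N$ or $\ker N/\im(g-1)$ on $W$, with $g$ a generator of $P_0$ and $N=1+g+\dots+g^{p-1}$), so $\#_I H^{i}(P_0;W)\le\#_I W$; summation over $j\le d$ then gives $\#_I H_{P_0,c}^{n}(M;\Orientation)\le\#_I H_c^{*}(M;\Orientation)$ for $n\ge d$. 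Chaining the three comparisons yields $\#_I H_c^{*}(F_0;\Orientation)\le\#_I H_c^{*}(M;\Orientation)$.

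\emph{Induction step and second inequality.} The first inequality is proved by induction on $|P|$, the case $|P|=1$ being trivial. For $|P|>1$ pick a central subgroup $P_0\le\CZ(P)$ of order $p$; the preceding step applied to $P_0\times H\le G$ gives $\#_I H_c^{*}(\Fix{M}{P_0};\Orientation)\le\#_I H_c^{*}(M;\Orientation)$, and since $(P/P_0)\times H$ acts on the cohomology manifold $\Fix{M}{P_0}$ and $\Fix{M}{P}$ is the fixed set of $P/P_0$ there, the induction hypothesis delivers $\#_I H_c^{*}(\Fix{M}{P};\Orientation)\le\#_I H_c^{*}(\Fix{M}{P_0};\Orientation)\le\#_I H_c^{*}(M;\Orientation)$. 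The second inequality is a formal consequence of the $H$-equivariant long exact sequence of compact-support cohomology for the pair formed by the closed set $\Fix{M}{P}$ and its open complement: splitting it into short exact sequences of $\BF_p H$-modules and using additivity of $\#_I$ on composition factors yields $\#_I H_c^{n}(M\setminus\Fix{M}{P};\Orientation)\le\#_I H_c^{n}(M;\Orientation)+\#_I H_c^{n-1}(\Fix{M}{P};\Orientation)$, and summation combined with the first inequality delivers the factor of two.

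\emph{Main obstacle.} The principal technical difficulty is careful bookkeeping of the $H$-action: every identification used in the proof of \fref{prop:fixed-point-set-is-manifold} must be checked $H$-equivariantly so that the dimension bound of Smith theory upgrades to a bound on the finer Jordan--H\"older invariant $\#_I$. The crucial algebraic input enabling this upgrade is that for $P_0=\BZ_p$ each $H^{i}(P_0;W)$ appears as an $H$-invariant subquotient of $W$; with this in hand, the tools assembled in \fref{sec:preliminaries} combine without further surprise.
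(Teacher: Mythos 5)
Your proof is correct, and for the crucial base case $P_0\isom\BZ_p$ it takes a genuinely different route from the paper. The paper ``recalls and adjusts'' Bredon's sheaf-theoretic proof of Smith theory \cite[Theorem II-19.7]{bredon2012sheaf}: on the orbit space $M/P_0$ it considers the subsheaves $\tilde\sigma(\pi_*\Orientation[M])$ and $\tilde\tau(\pi_*\Orientation[M])$ of $\pi_*\Orientation[M]$, where $\sigma=1+g+\dots+g^{p-1}$ and $\tau=1-g$ for a generator $g$ of $P_0$, and the two interleaved exact sheaf sequences relating them to $\iota_*\Orientation[F]$; since $H$ commutes with $g$, these sequences are $H$-equivariant, and adding the inequalities read off from the even-degree part of one long exact sequence and the odd-degree part of the other, then cancelling the repeated $\tilde\sigma$- and $\tilde\tau$-terms, gives $\#_IH_c^*(F;\Orientation)\le\#_IH_c^*(M/P_0;\pi_*\Orientation[M])\isom\#_IH_c^*(M;\Orientation)$, the last step by degeneration of the Leray spectral sequence of the finite map $\pi$. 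You instead argue entirely in Borel equivariant cohomology: the restriction $H_{P_0,c}^n(M;\Orientation)\to H_{P_0,c}^n(F_0;\Orientation)$ is an isomorphism for $n>d$ (exactly the mechanism the paper isolates as part~\fref{item:17} of \fref{lem:Omega_M^k}, derived as you do from \fref{prop:free-action-cohomology} and \fref{prop:long-exact-sequence-of-closed-subset}); the K\"unneth formula computes the fixed-set side, using that $H$ centralizes $P_0$ and hence acts trivially on $H^*(P_0;\BF_p)$; and on the ambient side the subquotient bound $\#_IH^i(P_0;W)\le\#_IW$ from the $2$-periodic resolution, pushed through the $\BE_2$-page of \fref{prop:Leray-spectral-sequence} and the abutment filtration, bounds $\#_IH_{P_0,c}^n(M;\Orientation)$ by $\#_IH_c^*(M;\Orientation)$; chaining these at any fixed $n>d$ is a clean and valid replacement for the paper's telescoping. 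Your induction over a central $\BZ_p$ and your deduction of the second inequality from the compact-support long exact sequence coincide with the paper's, and you correctly anticipate the need to prove the statement for $\BF_p$-cohomology manifolds (note that all components of all fixed sets arising in the induction have dimension at most $d=\dim M$, so the single threshold $n>d$ works throughout, even when fixed sets have infinitely many components). What each approach buys: yours reuses the equivariant toolkit already assembled in \fref{sec:preliminaries} and never descends to sheaves on the orbit space, at the cost of the representation-theoretic bookkeeping on the $\BE_2$-page; the paper's version needs no spectral sequence at all for the base case, staying at the level of two long exact sequences and hence closer to the classical Smith argument.
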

\begin{proof}
  We prove the first inequality by induction on $|P|$.
  Assume first, that $P=\BZ_p$.
  We recall the proof of
  \cite[Theorem II-19.7]{bredon2012sheaf},
  and adjust it to our needs.

  Let $\pi\colon M\to M/P$ denote the orbit map,
  $F=\Fix{M}{P}$, and $\iota\colon F\hookrightarrow M/P$ denote the
  restriction of $\pi$ to $F$, which is a homeomorphic embedding.
  Then $H$ acts naturally on $M$, $F$, $M/P$,
  and the maps $\pi$, $\iota$ are $H$-equivariant.
  Therefore, by \fref{prop:fixed-point-set-is-manifold},
  $\Fix{M}{P}$ is an $H$-invariant disjoint union of cohomology manifolds over $\BF_p$.
  Let \ $\res\colon \pi_*\Orientation[M]\to\iota_*\Orientation[F]$
  denote the restriction homomorphism.
  
  We choose a generator $g\in P$, and set $\tau=1-g$,
  $\sigma=1+g+g^2+\dots + g^{p-1}$ in the group ring $\BF_pP$.
  For sheaves of $\BF_pP$-modules,
  we denote by $\tilde\sigma$ and $\tilde\tau$ the automorphisms
  given by the multiplications with $\sigma$ and $\tau$.
  Then \cite[Theorem II-19.7]{bredon2012sheaf} gives us
  the following two exact sequences of sheaves on $M/P$:
  $$
  0\to
  \tilde\sigma(\pi_*\Orientation[M]) \hookrightarrow
  \pi_*\Orientation[M] \stackrel{\tilde\tau\;\oplus\;\res}\longrightarrow
  \tilde\tau(\pi_*\Orientation[M])\oplus\iota_*\Orientation[F] \to0
  $$  
  $$
  0\to
  \tilde\tau(\pi_*\Orientation[M]) \hookrightarrow
  \pi_*\Orientation[M] \stackrel{\tilde\sigma\;\oplus\;\res}\longrightarrow
  \tilde\sigma(\pi_*\Orientation[M])\oplus\iota_*\Orientation[F] \to0
  $$  
  Since $H$ and $g$ commute, $H$ acts canonically on these sheaves,
  and the homomorphisms are $H$-equivariant.
  Therefore, the corresponding long exact sequences
  are sequences of $\BF_pH$-modules and module homomorphisms.
The parts
\begin{align*}
  H_c^{2k}\big(M/P;\pi_*\Orientation[M]\big) \to
  H_c^{2k}\big(M/P;\tilde\tau(\pi_*\Orientation[M])\big) \oplus
  H_c^{2k}\big(F;\Orientation[F]\big) \to&\\ \to
  H_c^{2k+1}\big(M/P;\tilde\sigma(\pi_*\Orientation[M])\big)\to\cdots&
\end{align*}
and
\begin{align*}
H_c^{2k+1}\big(M/P;\pi_*\Orientation[M]\big)\!\! \to \!\!
  H_c^{2k+1}\big(M/P;\tilde\sigma(\pi_*\Orientation[M])\big) \oplus
  H_c^{2k+1}\big(F;\Orientation[F]\big) \!\!\to&\\ \to
  H_c^{2k+2}\big(M/P;\tilde\tau(\pi_*\Orientation[M])\big)\to\cdots&
\end{align*}
 of these long exact sequences imply the inequalities
\begin{align*}
  \#_IH_c^{2k}\big(&M/P;\tilde\tau(\pi_*\Orientation[M])\big) +
  \#_IH_c^{2k}\big(F;\Orientation[F]\big) \\&\le
  \#_IH_c^{2k}\big(M/P;\pi_*\Orientation[M]\big) +
  \#_IH_c^{2k+1}\big(M/P;\tilde\sigma(\pi_*\Orientation[M])\big)
\end{align*}
and
 \begin{align*}
  \#_IH_c^{2k+1}\big(&M/P;\tilde\sigma(\pi_*\Orientation[M])\big) +
  \#_IH_c^{2k+1}\big(F;\Orientation[F]\big)\\& \le
  \#_IH_c^{2k+1}\big(M/P;\pi_*\Orientation[M]\big) +
  \#_IH_c^{2k+2}\big(M/P;\tilde\tau(\pi_*\Orientation[M])\big)
  \end{align*}
 for all $k\ge 0$. 
  Adding these inequalities up for all $k\ge0$, and cancelling the repeated
  terms, we obtain
  $$
  \#_IH_c^*\big(F;\Orientation[F]\big) \le
  \#_IH_c^*(M/P;\pi_*\Orientation[M]).
  $$
  The fibers of $\pi$ are finite,
  hence the Leray spectral sequence of $\pi$
  (see \cite[Theorem IV-6.1]{bredon2012sheaf})
  degenerates, and so
  $H_c^*(M/P;\pi_*\Orientation[M])\isom
  H_c^*(M;\Orientation[M])$.
  This proves the first inequality of the lemma.
  
  The second inequality follows from the first one
  by \fref{prop:fixed-point-set-is-manifold},
  and the long exact sequence for cohomology with compact support
  (see \cite[II-10.3]{bredon2012sheaf}).

  Next we do the induction step.
  We choose a subgroup $A\isom\BZ_p$ in the center of $G$.
  Then $N=\Fix{M}{A}$
  is a $G$-invariant union of disjoint cohomology manifolds over $\BF_p$,
  and $G/A$ acts on $N$ canonically.
  Applying the induction hypothesis first to the $A$-action on $M$,
  and then to the $G/A$-action on $N$,
  we see that our statements hold in this case.
  This completes the induction step.
\end{proof}

Now we are ready to prove the following special case of
\fref{lem:action-of-H-on-cohomology-of-U-is-trivial}. 

\begin{lem}\label{lem:Zp-by-cyclic-fixed-point}
  For all integers $d,B\ge0$, there is an integer $n_4(d,B)$
  with the following property.
  Let $p$ be a prime, and
  $G$ be a finite group of the form $G=A\semidirect H$,
  where $A$ is isomorphic to $\BZ_p$ equipped with an $H$-action,
  and $|H|$ is not divisible by $p$.
  Let $M$ be a $d$-dimensional effective $G$-manifold over $\BF_p$
  such that
  $\dim H_c^*(M;\Orientation)\le B$.
  Suppose that $F=\Fix{M}{A}$ is nonempty,
  and the induced $H$-action on $H_c^*(M;\Orientation)$ is trivial.
  Then $H$ has a subgroup $\tilde H$ of index at most $n_4(d,B)$
  such that
  \begin{enumerate}[\indent(a)]
  \item \label{item:24}
    $\tilde H$ commutes with $A$, and
  \item \label{item:25}
    $\tilde H$ acts trivially
    on $H_c^*(F;\Orientation)$
    and on $H_c^*\big(M\setminus F;\Orientation\big)$.
  \end{enumerate}
\end{lem}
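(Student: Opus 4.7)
The plan is to construct $\tilde H$ in two stages: first I shrink $H$ to a subgroup commuting with $A$, proving part~(a), and then the generalised Smith theory of \fref{lem:Smith-theory-generalised} delivers part~(b) almost automatically. As a preliminary reduction, the triviality of the $H$-action on $H_c^d(M;\Orientation)$ forces $H$ to fix each connected component of $M$, while \fref{prop:fixed-point-set-is-manifold} bounds the number of connected components of $F$ by $\dim H_c^*(F;\Orientation)\le B$. Passing to the pointwise stabiliser $H_1\le H$ of this finite set of components of $F$ costs index at most $B!$.

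To bound the conjugation action $\alpha\colon H_1\to\Aut(A)\cong\BF_p^*$, I distinguish two cases. For $p=2$ there is nothing to do. For $p$ odd, suppose first that $F$ has a component $F_0$ of dimension $k<d$ contained in a component $M_0$ of $M$. Part~(c) of \fref{lem:Omega_M^k} yields, for each $n\ge 1$, an $H_1$-equivariant isomorphism
\[
\Omega_M^k\colon H^{n+d-k}(A;\BF_p)\otimes H_c^k(F_0;\Orientation)\;\stackrel{\isom}{\longrightarrow}\;H^n(A;\BF_p)\otimes H_c^d(M_0;\Orientation).
\]
Poincar\'e duality identifies $H_c^k(F_0;\Orientation)\cong H_0(F_0;\BF_p)$ and $H_c^d(M_0;\Orientation)\cong H_0(M_0;\BF_p)$; since $F_0$ and $M_0$ are connected, $h\in H_1$ acts trivially on each. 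Using the ring structure of \fref{prop:cohomology-of-elementary-abelian-p-groups}, one checks that $h$ acts on $H^n(A;\BF_p)$ by $\alpha_h^{\pm\lceil n/2\rceil}$, so matching the two sides of $\Omega_M^k$ at $n=1$ and $n=2$ forces $\alpha_h^{(d-k)/2}=1$ when $d-k$ is even and $\alpha_h=1$ when $d-k$ is odd; in either case $|\alpha(H_1)|\le d/2$. If instead every component of $F$ has dimension $d$, then $F$ is a union of components of $M$ on which $A$ acts trivially, and effectiveness of $G$ supplies another component $M_1$ on which $A$ acts faithfully and hence (as $|A|$ is prime) freely; \fref{lem:elementary-p-by-cyclic-free-action} applied to $M_1$---after replacing $A\semidirect H_1$ by its effective quotient on $M_1$, which retains the semidirect product structure because the kernel of the action meets $A$ trivially---bounds $|\alpha(H_1)|$ by $n_1(1,d)$. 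Setting $\tilde H=\ker\alpha$ establishes part~(a) with $[H:\tilde H]$ controlled by $d$ and $B$.

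For part~(b), since $A$ and $\tilde H$ commute, $A\times\tilde H$ acts on $M$, so \fref{lem:Smith-theory-generalised} gives, for every nontrivial irreducible $\BF_p[\tilde H]$-module $I$, the inequalities
\[
\#_I H_c^*(F;\Orientation)\le\#_I H_c^*(M;\Orientation)=0,\qquad \#_I H_c^*(M\setminus F;\Orientation)\le 2\,\#_I H_c^*(M;\Orientation)=0,
\]
since $\tilde H$ acts trivially on $H_c^*(M;\Orientation)$. Because $p\nmid|\tilde H|$, Maschke's theorem makes every $\BF_p[\tilde H]$-module semisimple, so vanishing of $\#_I$ at all nontrivial $I$ forces both $\tilde H$-actions in~(b) to be trivial. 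The main obstacle is the equivariance calculation above: $h$ and $A$ do not commute, so the action of $h$ on $H^*(A;\BF_p)$ is twisted by $\alpha_h$, and the arithmetic constraint on $\alpha_h$ only emerges by playing the degree shift $n$ in \fref{lem:Omega_M^k} against the explicit ring structure from \fref{prop:cohomology-of-elementary-abelian-p-groups}.
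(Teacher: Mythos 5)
Your argument is essentially the paper's own proof: pass to a component-preserving subgroup of bounded index (using that $F$ has at most $B$ components), extract the relation $\alpha_h^{\lceil(d-k)/2\rceil}=1$ by comparing the twisted actions of $H$ on the two sides of the isomorphism of \fref{lem:Omega_M^k} via the ring structure in \fref{prop:cohomology-of-elementary-abelian-p-groups}, and deduce part (b) from \fref{lem:Smith-theory-generalised} together with Maschke semisimplicity, exactly as the paper does. Your separate treatment of the degenerate case in which every component of $F$ is $d$-dimensional is a sound refinement rather than a deviation --- the paper's proof tacitly assumes $k<d$, which is automatic in its application since $M$ is connected there --- though when $A$ stabilizes no single component of $M\setminus F$ you should apply \fref{lem:elementary-p-by-cyclic-free-action} to all of $M\setminus F$ (on which $A$ still acts freely, and which is $G$-invariant) rather than to one component.
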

\begin{proof}
  If $p=2$, then $A\isom\BZ_2$ has no nontrivial  automorphism,
  so \fref{item:24} holds with $\tilde H=H$.

  Assume now that $p>2$.
  We identify $\Aut(A)$ with the multiplicative group $\BF_p^*$,
  then $H$ acts on $A$ via a character
  $\lambda\colon H\to\BF_p^*$.
  \fref{prop:cohomology-of-elementary-abelian-p-groups} implies that
  $H$ acts on $H^{n}(A;\BF_p)\isom\BF_p$
  via the character $\lambda^{-\lceil\frac n2\rceil}$ for all
  $n\ge0$. 

  $H$ permutes the connected components of $F$ and $M$.
  Let $F_0$ be a connected component of $F$ of dimension say $k$,
  and let $M_0$ be the connected component of $M$ containing $F_0$.
  \fref{prop:fixed-point-set-is-manifold}
  and \fref{prop:Hd-cohomology-manifold}
  imply that $F$ has at most $B$ connected components,
  so $H$ has a subgroup $H_0$ of index at most $B$
  that maps $F_0$ into itself,
  and therefore it maps $M_0$ also into itself.
  By \fref{prop:Hd-cohomology-manifold},
  the $H_0$-action on $H_c^k(F_0;\Orientation)$ and
  $H_c^d(M_0;\Orientation)$
  is trivial.

  Choose an even integer $n>0$.
  \fref{lem:Omega_M^k} gives us
  an $H_0$-equivariant  isomorphism
  $$
  H^{n+d-k}(A;\BF_p)\otimes H_c^k(F_0;\Orientation)
  \stackrel\isom\longrightarrow
  H^{n}(A;\BF_p)\otimes H_c^d(M_0;\Orientation).
  $$
  Let $\lambda_0$ denote the restriction of $\lambda$ to $H_0$.
  Since $H_0$ acts on the two sides via the characters
  $\lambda_0^{-\lceil\frac{n+d-k}2\rceil}$  and
  $\lambda_0^{-\frac{n}2}$
  respectively,
  we obtain that $\lambda_0^{\lceil\frac{d-k}2\rceil}=1$.
  Since $k<d$,
  the subgroup $\tilde H=\ker(\lambda_0)$ has index at most
  $\lceil\frac{d-k}2\rceil$ in $H_0$,
  so it has index at most $B\lceil\frac{d-k}2\rceil$ in $H$.
  Moreover, $\ker(\lambda_0)$ commutes with $A$.
  This implies \fref{item:24} for odd primes as well.

  In particular, $A\times\tilde H$ is a subgroup of $G$.
  We consider $M$ as an $(A\times\tilde H)$-manifold over
  $\BF_p$.
  Then $H_c^*(M;\Orientation)$ is an $\tilde H$-module
  with no non-trivial composition factors.
  \fref{lem:Smith-theory-generalised} implies that
  $H_c^*(F;\Orientation)$ and
  $H_c^*\big(M\setminus F;\Orientation\big)$
  are also $\tilde H$-modules
  with no non-trivial composition factors.
  This proves \fref{item:25}.
\end{proof}

\begin{lem} \label{lem:Cohomologically-free-action-on-complement-of-fixedpoint-set}
  For all integers $d,B\ge0$, there is an integer $n_5(d,B)$
  with the following property.\\
  Let $p$ be a prime larger than the bound $\tilde C(d,B)$ defined in \fref{prop:number-of-stapilizers-is-bounded},
  and let $G=P\semidirect H$ be the semidirect product of a $p$-group $P$
  and a finite group $H$ of order not divisible by $p$.
  Let $M$ be a connected topological $d$-manifold
  with an effective $G$-action.
  Suppose that
  $\dim H_c^*(M;\Orientation)\le B$,
  and the induced $H$-action on $H_c^*(M;\Orientation)$ is trivial.
  Let $K\in\Stab(P,M)$ be a stabilizer subgroup different from $\{1\}$.
  Then
  $H$ has a subgroup $H_1$ of index at most $n_5(d,B)$
  and $K$ has a subgroup $L\ne\{1\}$
  such that $M\setminus M^L$ is $H_1$-invariant
  and the induced $H_1$-action on
  $H_c^*(M\setminus M^L;\Orientation)$
  is trivial.
\end{lem}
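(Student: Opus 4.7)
The key is to apply \fref{lem:Zp-by-cyclic-fixed-point} to a well-chosen cyclic subgroup $L\le K$ of order $p$. Since $p>\tilde C(d,B)$, the characteristic subgroup of $P$ provided by \fref{prop:number-of-stapilizers-is-bounded} has $p$-power index at most $\tilde C(d,B)$ and so must be $P$ itself, giving $|\Stab(P,M)|\le\tilde C(d,B)$. Because $P_{hx}=hP_xh^{-1}$, the group $H$ acts on the finite set $\Stab(P,M)$ by conjugation; the kernel $H_0\le H$ of this action satisfies $|H:H_0|\le \tilde C(d,B)!$ and normalizes every $P$-stabilizer, in particular $K$.

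I would then produce $L$ as follows. Pick $L_*$ of minimal order among the nontrivial $P$-stabilizers contained in $K$, and set $A_*:=\Omega_1 Z(L_*)$; this is characteristic in $L_*$, hence $H_0$-invariant, and elementary abelian of rank $a\ge1$. I claim $a=1$. Otherwise, pick $y\in M$ with $P_y=L_*$, so $y\in M^{A_*}$; since $A_*\le P$ acts nontrivially on the connected manifold $M$, one has $M^{A_*}\ne M$, so the component of $y$ in $M^{A_*}$ has dimension $d(A_*)<d$, and \fref{prop:Borel-fixpoint-formula} yields an index-$p$ subgroup $H'<A_*$ with $d(H')>d(A_*)$. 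Hence there are points $y'\in M^{H'}$ arbitrarily close to $y$ with $y'\notin M^{A_*}$, and for these $A_*\cap P_{y'}=H'$ is a nontrivial subgroup of $A_*$ (since $a\ge2$). Because the action of the finite group $P$ is continuous on the Hausdorff space $M$, one has $P_{y'}\le P_y=L_*$ for $y'$ sufficiently close to $y$, so $P_{y'}$ is a nontrivial $P$-stabilizer contained in $K$; minimality of $L_*$ forces $P_{y'}=L_*$, whence $A_*\le L_*=P_{y'}$ and $y'\in M^{A_*}$, a contradiction. Thus $|A_*|=p$, and I set $L:=A_*$, a cyclic order-$p$ subgroup of $K$ that is $H_0$-invariant and fixes $y$.

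The subgroup $L\semidirect H_0\le G$ acts effectively on the $d$-manifold $M$, $|H_0|$ is coprime to $p$, $\dim H_c^*(M;\Orientation)\le B$, $M^L\ne\emptyset$, and the $H_0$-action on $H_c^*(M;\Orientation)$ is trivial. \fref{lem:Zp-by-cyclic-fixed-point} then produces a subgroup $H_1\le H_0$ of index at most $n_4(d,B)$ which commutes with $L$ and acts trivially on $H_c^*(M^L;\Orientation)$ and on $H_c^*(M\setminus M^L;\Orientation)$. Since $H_1$ normalizes $L$, the set $M\setminus M^L$ is $H_1$-invariant, and taking $n_5(d,B):=\tilde C(d,B)!\cdot n_4(d,B)$ completes the proof.

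The main obstacle is the middle step, namely extracting the $H_0$-invariant cyclic subgroup of $K$ of order $p$: the argument must blend the combinatorial minimality inside $\Stab(P,M)$ with Borel's local dimension count and the upper semi-continuity of stabilizers, and the hypothesis $p>\tilde C(d,B)$ is used in an essential way, for it forces the index-at-most-$\tilde C(d,B)$ subgroup from \fref{prop:number-of-stapilizers-is-bounded} to be all of $P$, so that the entire set $\Stab(P,M)$ is small enough to be normalized simultaneously by a bounded-index subgroup of $H$.
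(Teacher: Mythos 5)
Your proof is correct, and it follows the paper's overall scaffolding — use $p>\tilde C(d,B)$ and \fref{prop:number-of-stapilizers-is-bounded} to force $\big|\Stab(P,M)\big|\le\tilde C(d,B)$, extract an order-$p$ subgroup $L\le K$ with nonempty fixed point set that a bounded-index subgroup of $H$ normalizes, then conclude via \fref{lem:Zp-by-cyclic-fixed-point} — but your mechanism in the central step is genuinely different. The paper takes $E=\mathrm{soc}(K)$, normalized by $\CN_H(K)$ (of index at most $\tilde C$, since $K$ has at most $\tilde C$ conjugates), and \emph{descends}: starting from a point with nontrivial $E$-stabilizer, it iterates Borel's formula (\fref{prop:Borel-fixpoint-formula}) together with upper semicontinuity of stabilizers, dropping the stabilizer by index $p$ at each stage until it reaches some $L\in\Stab(E,M)$ of order exactly $p$; it then needs a \emph{second} application of \fref{prop:number-of-stapilizers-is-bounded}, now to $E$, to bound $\big|\Stab(E,M)\big|$ and pass to a subgroup $\tilde H\le\CN_H(K)$ normalizing $L$. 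You instead pick a \emph{minimal} nontrivial $P$-stabilizer $L_*\le K$ and show by a one-shot Borel-plus-semicontinuity contradiction that $\Omega_1 Z(L_*)$ is already cyclic of order $p$; since this subgroup is characteristic in $L_*$ and your $H_0$ fixes $\Stab(P,M)$ elementwise, its invariance is automatic, so the stabilizer-counting proposition is invoked only once. What each approach buys: the paper's normalizer bookkeeping gives the sharper constant (roughly $\tilde C^2\, n_4$ versus your $\tilde C!\cdot n_4$; you could recover a $\tilde C\cdot n_4$-type bound by using the set-stabilizer of $L_*$ in the $H$-action on $\Stab(P,M)$ rather than the kernel), while your route is structurally cleaner — invariance of $L$ comes for free, and the cyclicity of $\Omega_1 Z(L_*)$ for a minimal stabilizer is a pleasant standalone observation, proved without using $p>\tilde C$. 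The one micro-step you share with the paper — that $d(H')>d(A_*)$ yields points of $M^{H'}$ arbitrarily close to $y$ outside $M^{A_*}$, whose $P$-stabilizers are then trapped inside $L_*$ — is justified in your write-up at least as carefully as in the paper's own text (both rest on components of fixed sets being cohomology manifolds of constant local dimension, via \fref{prop:fixed-point-set-is-manifold}), so there is no gap relative to the published argument.
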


\begin{proof}
  Proper subgroups of $P$ have index at least $p$,
  so  by \fref{prop:number-of-stapilizers-is-bounded}
  \[
    \big|\Stab(P,M)\big|\le\tilde C(d,B).
  \]
  Conjugates of $K$ are also stabilizer subgroups,
  so $K$ has at most $\tilde C(d,B)$ conjugates.
  Therefore the normalizer subgroup $\CN_H(K)$ has index at most
  $\tilde C(d,B)$ in $H$.
  
  Let $E$ be the socle of $K$.
  This is a non-trivial elementary abelian $p$-subgroup. If $E$ acts
  freely on $M$, then
  $H_1=\CN_H(K)$ and $L=E$ are a good choice.
  Otherwise there is a point $x\in M$ with a non-trivial stabilizer
  $E_x$ in $E$.
  If $|E_x|>p$, then Borel's fixed point formula
  (\fref{prop:Borel-fixpoint-formula})
  implies that there is a nearby point $y\in M$ whose stabilizer
  $E_y$ has index $p$ in $E_x$. 
  Iterating this argument we obtain a stabilizer subgroup
  $L\in\Stab(E,M)$ of order $p$.
  
  Since $E$ is characteristic in $K$, it is normalized by $\CN_H(K)$.
  As above, \fref{prop:number-of-stapilizers-is-bounded}
  implies that
  \[
    \big|\Stab(E,M)\big|\le\tilde C(d,B),
  \]
  and $\CN_H(K)$ acts on  $\Stab(E,M)$ via conjugation.
  Hence,  $\CN_H(K)$ has a subgroup $\tilde H\le\CN_H(K)$
  of index at most $\tilde C(d,B)$
  which normalizes $L$, and therefore
  $M^L$ and $M\setminus M^L$ are $\tilde H$-invariant.

  Now we apply \fref{lem:Zp-by-cyclic-fixed-point}
  to the subgroup $L\semidirect \tilde H\le G$
  with its given action on $M$.
  We obtain a subgroup $H_1\le \tilde H$
  with the required properties.
\end{proof}

\begin{proof}[Proof of
  \fref{lem:action-of-H-on-cohomology-of-U-is-trivial}] 
  By construction, $U$ is $P$-invariant, and it is also $H$-invariant
  since $H$ normalizes $P$. Therefore $U$ is $G$-invariant.
  Since $p>\tilde C(d,B)$,
  \fref{prop:number-of-stapilizers-is-bounded} implies the second part
  of inequality \fref{eq:lemma_6.1_ineq}.
  
  We prove the rest of the statement
  by induction on the size of $\Stab(P,M)$.
  If $\big|\Stab(P,M)\big|=1$, then $\{1\}$ is the only stabilizer subgroup,
  hence the $P$-action is free, $U=M$, and the statement holds.
  For the induction step we assume that $\big|\Stab(P,M)\big|>1$,
  and  the statement holds in all cases with a smaller number of
  stabilizer subgroups.
  
  Let $K\in \Stab(P,M)$ be a stabilizer different
  from $\{1\}$.
  \fref{lem:Cohomologically-free-action-on-complement-of-fixedpoint-set}
  gives us a subgroup $H_1\le H$ of index at most $n_5(d,B)$
  and  a non-trivial subgroup $L\le K$
  such that the open submanifold $V=M\setminus M^L$
  is $H_1$-invariant and the $H_1$-action on
  $H_c^*(V;\Orientation)$ is trivial.
  Moreover, \fref{prop:fixed-point-set-is-manifold} implies that $V$ is
  connected and
  \[
    \dim H_c^*(V;\Orientation)\le2B.
  \]  
  Since $M^L$ contains all points whose stabilizer is $K$, we have
  \[
    \Stab(P,V) \subseteq \Stab(P,M)\setminus\{K\}.
  \]
  We may apply the induction hypothesis to the subgroup
  $P\semidirect H_1\le G$ with its given action on $V$.
  This gives us a subgroup $\tilde H\le H_1\le H$ with all the
  required properties.
  The induction step is complete.
\end{proof}

\section{Proof of the Main Theorem}
\label{sec:proof-main-theorem}

\begin{lem}\label{lem:Cohomologically-trivial-actions-Minkowski}
	For all integers $B,\tau\ge 0$, there is an integer $I(B,\tau)$
	with the following property.\\
	Let $G$ be a finite group acting continuously on a topological
	manifold $M$
	such that
        $H_*(M;\BZ)$ has rank $B$
        and has at most $\tau$ torsion elements.
	Then $\dim H_c^*(M;\Orientation)\le B+2\tau$ for all primes $p$.
	Moreover,
	$G$ has a normal subgroup of index at most $I(B,\tau)$
	whose canonical action on $H^*_c(M;\Orientation)$ is trivial for all $p$.
\end{lem}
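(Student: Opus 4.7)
The dimension bound follows from Poincar\'e duality $H_c^i(M;\Orientation)\isom H_{d-i}(M;\BF_p)$ (\fref{prop:generalised-Poincare-duality}) combined with the universal coefficient theorem: writing $H_n(M;\BZ)=\BZ^{r_n}\oplus T_n$, the UCT gives $\dim H_n(M;\BF_p)=r_n+\dim(T_n\otimes\BF_p)+\dim T_{n-1}[p]$, and since each torsion contribution is bounded by $\log_p|T_n|$, summing over $n$ yields $\sum_n\dim H_n(M;\BF_p)\le B+2\log_p|T_*|\le B+2\tau$.

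For the normal subgroup I would first reduce to a statement about the integer homology. The torsion subgroup $T_*\le H_*(M;\BZ)$ is preserved by the $G$-action, so $G$ acts on the free quotient $F_*\isom\BZ^B$ through $\GL(B,\BZ)\subset\GL(B,\BQ)$; by Minkowski (\fref{thm:Minkowski-Jordan}) the image has order at most some $J(B)$. The action on $T_*$ factors through $\Aut(T_*)$ of order at most $\tau!$, while the remaining ``unipotent'' ambiguity takes values in $\Hom(F_*,T_*)$, of size at most $\tau^B$. Hence the kernel $K\normal G$ of the action on $H_*(M;\BZ)$ has index bounded in $B,\tau$. Next, for each prime $p$ the natural short exact sequence
$$
0\to H_n(M;\BZ)\otimes\BF_p\to H_n(M;\BF_p)\to\Tor\bigl(H_{n-1}(M;\BZ),\BF_p\bigr)\to 0
$$
is $K$-equivariant and $K$ fixes both end terms, so the $K$-action on $H_n(M;\BF_p)$ factors through a homomorphism into the elementary abelian $p$-group $\Hom\bigl(\Tor(H_{n-1},\BF_p),H_n\otimes\BF_p\bigr)$. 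For primes $p\nmid|T_*|$ both end terms vanish and $K$ already acts trivially on $H_n(M;\BF_p)$. For the at most $\log_2\tau$ \emph{bad} primes $p\le\tau$ dividing $|T_*|$, and for each of the boundedly many relevant indices $n$, the image of $K$ has order at most $p^{(B+2\tau)^2}$; intersecting these finitely many kernels inside $K$ yields a subgroup $\tilde K\le K$ of index bounded in $B,\tau$ acting trivially on $H_*(M;\BF_p)$ for every $p$. Each such kernel is $G$-stable because the condition ``$k$ fixes every element of $H_n(M;\BF_p)$'' is preserved under $G$-conjugation (for $g\in G$, $(gkg^{-1})(v)=g(k(g^{-1}v))=g(g^{-1}v)=v$), so $\tilde K\normal G$, and Poincar\'e duality transfers the triviality to $H_c^*(M;\Orientation)$.

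The only real obstacle is that triviality on integer homology does not directly imply triviality on $H_*(M;\BF_p)$: the $\Tor$ term in the UCT permits a unipotent correction of exponent $p$. The saving observation is that such a correction is controlled by $p$-torsion in $H_*(M;\BZ)$, so it can be non-trivial only for the finitely many primes dividing $|T_*|$, each at most $\tau$; the consequent additional index cost can be uniformly absorbed into the constant $I(B,\tau)$.
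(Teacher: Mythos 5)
Your proposal is correct and takes essentially the same route as the paper's proof: you first cut down to a bounded-index kernel $K$ of the action on $H_*(M;\BZ)$ using Minkowski's bound on the free quotient together with $\Aut(\CT)$ and the unipotent part $\Hom(\CF,\CT)$, then for each prime $p$ you use the ($K$-equivariant) universal coefficient sequence to force the residual $K$-action on $H_*(M;\BF_p)$ into $\Hom\bigl(\Tor(H_*(M;\BZ),\BF_p),\,H_*(M;\BZ)\otimes\BF_p\bigr)$, which vanishes for $p\nmid|T_*|$ and is of bounded order for the finitely many primes $p\le\tau$, and finally transfer via Poincar\'e duality --- exactly the paper's argument, including the same $B+2\tau$ dimension count. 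One cosmetic slip: for $p\nmid|T_*|$ it is only the $\Tor$ term (and hence the correction group $\Hom(\Tor,\cdot)$), not \emph{both} end terms of the sequence, that vanishes, but this does not affect the conclusion.
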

\begin{proof}
	By the Poincar\'e duality
	(\fref{prop:generalised-Poincare-duality}),
	it is enough to prove that $\dim H_*(M;\BZ_p)\le B+2\tau$,
	and to find a subgroup of bounded index
	which acts trivially on the homology groups
	$H_*(M;\BZ_p)$ for all primes $p$.
	
	Let $\CT$ be the torsion part of $H_*(M;\BZ)$,
	and $\CF=H_*(M;\BZ)/\CT\isom\BZ^B$ be the free part.
	The $G$-action on $M$ induces a canonical $G$-action on  the split
	short exact sequence $0\to \CT\to H_*(M;\BZ) \to\CF\to 0$. This
	action gives a homomorphism of $G$ into the automorphism group of
	this short exact sequence, which is isomorphic to the semidirect
	product
        $\Hom(\CF,\CT)\semidirect\big(\Aut(\CT)\times\Aut(\CF)\big)$.
        The kernel
	$G_0$ of this homomorphism acts trivially on $H_*(M;\BZ)$. 
	A theorem of Minkowski~\cite{Minkowski}
	gives an upper bound $I_0(B)$ on 
	the size of a finite subgroup of 
	$\Aut(\BZ^B)\cong\Aut(\CF)$,
	and 
	$\big|\Aut(\CT)\big|\le|\CT|^{\log |\CT|}$,
	hence the index of  $G_0$ in $G$ is at most $\tau^{\log \tau}\cdot I_0(B)\cdot \tau^B$.
	
	By  the universal coefficient theorem,
	there is a split exact sequence
	\begin{equation}\label{eq:univ_coeff}
	0\to
	H_*(M;\BZ)\otimes \BZ_p \to
	H_*(M;\BZ_p)\to
	\Tor\big(H_*(M;\BZ),\BZ_p\big)\to
	0
	\end{equation}
	for all primes $p$, where
	\begin{equation}
	\label{eq:15}
	\Tor\big(H_*(M;\BZ),\BZ_p\big)=\{h\in H_*(M;\BZ):ph=0\},
	\end{equation}
	which has at most $\tau$ elements.
	Moreover, $H_*(M;\BZ)\otimes \BZ_p$ is an elementary abelian
	$p$-group of rank at most $\tau+B$.
	This implies that $\dim H_*(M;\BZ_p)\le B+2\tau$.
	
	The action of $G$ on $M$ induces a homomorphism of $G$ into
	the automorphism group of the split exact sequence
	\eqref{eq:univ_coeff}, which is isomorphic to the semidirect product
	of  
	$\Aut\big(H_*(M;\BZ)\otimes\BZ_p\big)\times
	\Aut\big(\Tor(H_*(M;\BZ),\BZ_p)\big)$ and the
	normal subgroup
	$\Hom\big(\Tor(H_*(M;\BZ),\BZ_p),H_*(M;\BZ)\otimes\BZ_p\big)$.
	Since $G_0$ acts trivially on $H_*(M;\BZ)$, the
	restriction of this homomorphism onto $G_0$ yields a homomorphism 
	$$
	G_0\to\Hom\big(\Tor(H_*(M;\BZ),\BZ_p),H_*(M;\BZ)\otimes\BZ_p\big),
	$$
	the kernel $G_p$ of which acts trivially on
	$H_*(M;\BZ_p)$. By \eqref{eq:15} the torsion group 
	$\Tor\big(H_*(M;\BZ),\BZ_p\big)$
	is trivial if $p>\tau$ and has at most $\tau$ elements if $p\leq \tau$. The
	tensor product $H_*(M;\BZ)\otimes \BZ_p$ is an elementary abelian
	$p$-group of rank at most $\tau+B$.  Consequently, we have $G_p=G_0$ if
	$p>\tau$, and $|G_0:G_p|\leq p^{(\tau+B)\tau}$ if $p\leq \tau$.
	
	The normal subgroup of those elements of $G$ that act trivially on
	$H_*(M;\BZ_p)$ for every prime $p$ contains the subgroup
	$G_0\cap\bigcap_{p\leq \tau}G_p$, therefore, its index is bounded from
	above by the index of the latter subgroup, which is at most 
	\[
	|G:G_0|\cdot\prod_{p\leq \tau}|G_0:G_p|\leq  \tau^{\log \tau}\cdot I_0(B)\cdot
	\tau^{B+(\tau+B)\tau^2}.\qedhere
	\] 
\end{proof}

Now we are ready to present the  
\begin{proof}[Proof of \fref{thm:Ghys-conjecture-open-manifolds}]
Let $M_1,\dots,M_k$ be the connected components of $M$, $B$ be the
rank of $H_*(M;\BZ)$, $\tau$ the size of the
torsion part of $H_*(M;\BZ)$, and $d = \dim(M)$.
By \fref{prop:Hd-cohomology-manifold} and
\fref{lem:Cohomologically-trivial-actions-Minkowski}, 
we may assume that the canonical $G$-action on $H_c^*(M;\Orientation)$
is trivial, and we have 
\[
  k=\dim H^d_c(M,\Orientation) \leq
  \dim H_c^*(M;\Orientation) \le
  B+2\tau
\]  
for all primes $p$. Then $G$ acts trivially on $H_0(M,\BF_p)$, hence
the elements of $G$ map each component of $M$ into itself, and we
obtain homomorphisms $\phi_i\colon G\to \Homeo(M_i)$ corresponding to
the action of $G$ on $M_i$. Let $G_i$ denote the image $\phi_i(G)$ of
$G$ under $\phi_i$. 

Take any of the indices $1\le i\le k$. \fref{thm:Mann-Su_v0} gives us
a bound $\rank(G_i)\le r=r\big(d,H_*(M;\BZ)\big)$. 
Using the bounds in
\fref{prop:number-of-stapilizers-is-bounded},
\fref{lem:action-of-H-on-cohomology-of-U-is-trivial}, and
\fref{lem:nilpotent-by-cyclic-free-action}
we define the constants
$$
\tilde C = \tilde C(d,B),
\quad\quad
n_3=n_3(d,B),
\quad\quad
 n_2=n_2(2r,d,2^{\tilde C}B).
$$
Apply \fref{thm:Nilpotent-or-Special-by-cyclic_v0} to each of the
groups $G_i$
with the parameter
\[
  T = \max\big(\,(\tilde C^{2r})!\,,\;  n_2 n_3\,\big)+1.
\]
The theorem has two possible outcomes.
Assume first that
\ref{thm:Nilpotent-or-Special-by-cyclic_v0} \fref{item:2} holds,
i.e.
$G_i$ has a subgroup of the form $P\semidirect_\alpha H$,
where $P$ is a $p$-group of order at most $p^{2r}$, $H$ is an abelian group,
and $\alpha\colon H\to\Aut(P)$ is the action of $H$ on $P$,
such that
\[
  p \nmid |H|,
  \quad\quad
  \big|\alpha(H)\big|\ge T.
\]
This implies that $(p^{2r})!> \big|\Aut(P)\big|\ge T>(\tilde C^{2r})!$,
hence $p>\tilde C$.
We apply \fref{lem:action-of-H-on-cohomology-of-U-is-trivial}
to the subgroup $P\semidirect_\alpha H\le G_i$
acting on $M_i$.
We obtain a $(P\semidirect_\alpha H)$-invariant open connected submanifold $U\subseteq M_i$ on which $P$ acts freely such that
\[
  \dim H_c^*(U;\Orientation)\le 2^{\tilde C}B,
\]
and a subgroup $H_1\le H$ of index at most $n_3$
acting trivially on $H_c^*(U;\Orientation)$.
We apply \fref{lem:nilpotent-by-cyclic-free-action}
to the $P\semidirect H_1$ action on $U$,
and obtain a subgroup $\tilde H\le H_1$ of index at most $n_2$
commuting with $P$.
But then $\tilde H\le\ker(\alpha)$, hence
$\big|\im(\alpha)\big|\le|H:H_1|\cdot|H_1:\tilde H|\le n_2n_3<T$,
a contradiction.
	
Thus 
\ref{thm:Nilpotent-or-Special-by-cyclic_v0} \fref{item:1}
must hold, i.e., for each $i$, there is a nilpotent normal subgroup
$N_i$ of index at most $I(r,T)$ in $G_i$. The map $\phi\colon G\to
G_1\times\dots\times G_k$, $\phi(g)=(\phi_1(g),\dots,\phi_k(g))$ is an
injective homomorphism, so
$\phi^{-1}\big(\phi(G)\cap(N_1\times\dots\times N_k)\big)$ is a
nilpotent normal subgroup of index at most $I(r,T)^{B+2\tau}$ in
$G$. This proves the theorem. 
\end{proof}

\bibliographystyle{amsplain}
\bibliography{Ghys}

\noindent Bal\'azs Csik\'os, Institute of Mathematics, E\"otv\"os Lor\'and University, \\Budapest, P\'azm\'any P. stny. 1/C, H-1117 Hungary. 
\\ \emph{E-mail address:} balazs.csikos@ttk.elte.hu

L\'aszl\'o Pyber, Alfr\'ed R\'enyi Institute of Mathematics, Budapest, Re\'altanoda u. 13-15, H-1053 Hungary,\\ \emph{E-mail address:} pyber.laszlo@renyi.mta.hu

Endre Szab\'o, Alfr\'ed R\'enyi Institute of Mathematics, Budapest, Re\'al\-tanoda u. 13-15, H-1053 Hungary,\\ \emph{E-mail address:} szabo.endre@renyi.mta.hu

\end{document}